\documentclass[12pt]{article}

\usepackage{graphics,amsmath,amssymb,amsthm,mathrsfs}

\newcommand{\br}{\mathbb{R}}

\newcommand{\varep}{\varepsilon}
\newcommand{\brd}{\mathbb{R}^d}

\setlength{\paperwidth}{8.5in}
\setlength{\paperheight}{11.0in}
\setlength{\textwidth}{6.5in}
\setlength{\textheight}{9.0in}
\setlength{\oddsidemargin}{0in}
\setlength{\evensidemargin}{0in}
\setlength{\topmargin}{0in}
\setlength{\headsep}{0.0in}
\setlength{\headheight}{0.0in}
\setlength{\marginparwidth}{0in}
\setlength{\marginparsep}{0in}

\newtheorem{thm}{Theorem}[section]
\newtheorem{lemma}[thm]{Lemma}

\newtheorem{remark}[thm]{Remark}
\newcommand{\average}{-\!\!\!\!\!\!\int}

\numberwithin{equation}{section}

\begin{document}

\bibliographystyle{amsplain}

\title{Homogenization of Elliptic Systems\\
With Neumann Boundary Conditions}

\author{Carlos E. Kenig\thanks{Supported in part by NSF grant DMS-0968472}
 \and Fanghua Lin \thanks{Supported in part by NSF grant DMS-0700517}
\and Zhongwei Shen\thanks{Supported in part by NSF grant DMS-0855294}}

\date{ }

\maketitle

\begin{abstract}
For a family of second order elliptic systems with rapidly oscillating periodic
coefficients in a $C^{1,\alpha}$ domain, 
we establish uniform $W^{1,p}$ estimates, Lipschitz estimates, and nontangential
maximal function estimates on solutions with Neumann boundary conditions.
\end{abstract}

\section{Introduction and statement of main results}

The main purpose of this work is to study uniform regularity estimates
for a family of elliptic operators $\{\mathcal{L}_\varep, \varep>0\}$,
arising in the theory of homogenization, with rapidly oscillating periodic coefficients.
We establish sharp $W^{1,p}$ estimates, Lipschitz estimates, and nontangential
maximal function estimates, which are uniform in the parameter $\varep$,
on solutions with Neumann boundary conditions.

Specifically, we consider
\begin{equation}\label{operator}
\mathcal{L}_\varep
=-\frac{\partial}{\partial x_i}\left[
a_{ij}^{\alpha\beta}\left(\frac{x}{\varep}\right)
 \frac{\partial}{\partial x_j}\right]
=-\text{\rm div}\left[ A\left(\frac{x}{\varep}\right)\nabla \right],
\end{equation}
where $\varep>0$.
We assume that the coefficient matrix 
$A(y)=\big(a_{ij}^{\alpha\beta} (y)\big)$ with $1\le i,j\le d$ and $ \ 1\le \alpha, \beta\le m$ 
is real and satisfies the ellipticity condition 
\begin{equation}\label{ellipticity}
 \mu |\xi|^2 \le a_{ij}^{\alpha\beta} (y) \xi_i^\alpha \xi_j^\beta \le \frac{1}{\mu} |\xi|^2 
\quad \text{ for } y\in \brd \text{ and }  \xi=(\xi_i^\alpha)\in \mathbb{R}^{dm},
\end{equation}
where $\mu>0$, the periodicity condition
\begin{equation}\label{periodicity}
A(y+z)=A(y) \quad \text{ for } y\in \mathbb{R}^{d} \text{ and }
z\in \mathbb{Z}^{d},
\end{equation}
and the smoothness condition
\begin{equation}\label{smoothness}
| A(x)-A(y)| \le \tau |x-y|^\lambda
\quad \text{ for some } \lambda \in (0,1) \text{ and } \tau \ge 0.
\end{equation}
We will say $A\in \Lambda (\mu, \lambda,\tau)$ if $A=A(y)$ satisfies conditions 
(\ref{ellipticity}), (\ref{periodicity}) and (\ref{smoothness}).

Let $f\in L^2(\Omega)$ and $g\in W^{-1/2,2}(\partial\Omega)$.
Consider the Neumann boundary value problem 
\begin{equation}\label{Neumann-problem-1}
\left\{
\aligned
\mathcal{L}_\varep (u_\varep) & =\text{\rm div} (f)&   & \text{ in }\Omega,\\
\frac{\partial u_\varep}{\partial\nu_\varep} & = g-n\cdot f  & &  \text{ on } \partial\Omega,
\endaligned
\right.
\end{equation}
where
\begin{equation}\label{conormal}
\left( \frac{\partial u_\varep}{\partial\nu_\varep}\right)^\alpha
=n_i (x) a_{ij}^{\alpha\beta}\big(\frac{x}{\varep}\big)
\frac{\partial u^\beta_\varep}{\partial x_j}
\end{equation}
denotes the conormal derivative associated with $\mathcal{L}_\varep$ and
$n=(n_1, \dots, n_d)$ is the outward unit normal to $\partial\Omega$.
Assume that $\int_\Omega u_\varep =0$.
It is known from the theory of homogenization that 
under the assumptions (\ref{ellipticity})-(\ref{periodicity}),
$u_\varep \to u_0$ weakly in $W^{1,2}(\Omega)$ as $\varep\to 0$, where
 $\mathcal{L}_0 (u_0)=\text{\rm div}(f)$
in $\Omega$ and $\frac{\partial u_0}{\partial \nu_0}=g-n\cdot f 
$ on $\partial\Omega$.
Moreover, the homogenized operator $\mathcal{L}_0$ is an elliptic operator
with constant coefficients satisfying (\ref{ellipticity}) 
and depending only on the matrix $A$ (see e.g. \cite{bensoussan-1978}).

In this paper we shall be interested in sharp regularity estimates of $u_\varep$, 
which are uniform in the parameter $\varep$, 
assuming that the data are in $L^p$ or Besov or H\"older spaces.
The following three theorems are the main results of the paper.
Note that the symmetry condition $A^*=A$, i.e.,
\begin{equation}\label{symmetry}
a_{ij}^{\alpha\beta} (y) =a_{ji}^{\beta\alpha} (y)  \quad \text{ for } 1\le i,j\le d
\text{ and } 1\le \alpha, \beta \le m,
\end{equation}
is also imposed in Theorems \ref{Lipschitz-estimate-theorem} and \ref{maximal-function-theorem}.

\begin{thm}[{$W^{1,p}$ estimates}]
\label{W-1-p-theorem} 
Suppose $A\in \Lambda(\mu,\lambda,\tau)$ and $1<p<\infty$.
Let $\Omega$ be a bounded $C^{1,\alpha}$ domain for some $0<\alpha<1$.
Let $g=(g^\beta)\in B^{-1/p, p}(\partial\Omega)$,
$f=(f_j^\beta)\in L^p(\Omega)$ and $F=(F^\beta)\in L^q(\Omega)$,
where $q=\frac{pd}{p+d}$ for $p>\frac{d}{d-1}$ and $q>1$ for
$1<p\le \frac{d}{d-1}$.
Then, if $F$ and $g$
satisfy the compatibility condition
 $\int_\Omega F^\beta
 +<g^\beta,1>=0$ for $1\le \beta\le m$, the weak solutions 
to 
\begin{equation}\label{W-1-p}
\left\{ \aligned
\mathcal{L}_\varep (u_\varep) &=\text{\rm div} (f) +F
 & & \text{ in } \Omega,\\
\frac{\partial u_\varep}{\partial\nu_\varep} & =g-n\cdot f
 & & \text{ on } \partial\Omega,\\
u_\varep & \in W^{1, p}(\Omega)
\endaligned
\right.
\end{equation}
satisfy the estimate
\begin{equation}\label{W-1-p-estimate}
\| \nabla u_\varep\|_{L^p(\Omega)}
\le C\, \left\{  \| f\|_{L^p(\Omega)} + \|F\|_{L^q(\Omega)} 
+\| g\|_{B^{-1/p, p}(\partial\Omega)}\right\},
\end{equation}
where $C>0$ depends only on $d$, $m$, $p$, $q$,
 $\mu$, $\lambda$, $\tau$ and $\Omega$.
\end{thm}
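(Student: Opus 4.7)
The argument proceeds in three phases: the base case $p=2$, an extension to $2<p<\infty$ via a real-variable (Calder\'on--Zygmund style) method driven by homogenization compactness, and a descent to $1<p<2$ by duality. First I reduce to the case $F=0$, $g=0$ where only the divergence-form datum $f$ is present: given $F\in L^q(\Omega)$ and $g\in B^{-1/p,p}(\partial\Omega)$ with $\int_\Omega F^\beta+\langle g^\beta,1\rangle=0$, I solve an auxiliary Neumann problem $-\Delta w=F$ in $\Omega$ with $\partial w/\partial n=-|\partial\Omega|^{-1}\langle g,1\rangle$ and $\int_\Omega w=0$, obtaining $\|\nabla w\|_{L^p(\Omega)}\le C\|F\|_{L^q(\Omega)}$ from the Sobolev embedding $L^q\hookrightarrow W^{-1,p}$ together with standard $C^{1,\alpha}$-domain elliptic regularity; a boundary extension then produces $h\in L^p(\Omega;\br^d)$ with $n\cdot h=g$ on $\partial\Omega$ and $\|h\|_{L^p}\le C\|g\|_{B^{-1/p,p}(\partial\Omega)}$. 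Absorbing $\nabla w-h$ into $f$ makes both $F$ and $g$ vanish without changing the Neumann condition $g-n\cdot f$. The $p=2$ estimate then follows from Lax--Milgram on the subspace $\{u\in W^{1,2}(\Omega):\int_\Omega u=0\}$ using the ellipticity condition (\ref{ellipticity}).

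For $2<p<\infty$, I would invoke Shen's real-variable criterion: it suffices to prove, uniformly in $\varep$, a reverse H\"older inequality
\[
\left(\average_{B\cap\Omega}|\nabla u_\varep|^q\right)^{1/q}
\le C\left(\average_{2B\cap\Omega}|\nabla u_\varep|^2\right)^{1/2}
\]
for some fixed $q>p$, whenever $u_\varep$ solves the homogeneous Neumann problem $\mathcal{L}_\varep u_\varep=0$ in $3B\cap\Omega$ with vanishing conormal on $3B\cap\partial\Omega$ (and analogously for interior balls). To establish this RH estimate at scale $r$ I use a two-scale argument. If $\varep\le r$, compactness in homogenization provides an approximating solution $v$ of $\mathcal{L}_0 v=0$ on $B\cap\Omega$ with the matching homogeneous Neumann condition; since $\mathcal{L}_0$ has constant coefficients and $\partial\Omega\in C^{1,\alpha}$, classical boundary Schauder theory bounds $\|\nabla v\|_{L^\infty}$ by $C$ times the $L^2$ average of $|\nabla u_\varep|$ on a larger ball. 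A Neumann corrector (the interior cell corrector $\chi$ plus a boundary-layer term adjusting the conormal of the two-scale expansion to vanish on $\partial\Omega\cap B$) then transfers this bound back to $\nabla u_\varep$ with sharp error, yielding RH. For $\varep>r$, rescaling $y=x/\varep$ reduces the problem to a uniformly smooth non-oscillatory equation on the unit scale, where the $C^{1,\lambda}$ Schauder estimate supplied by (\ref{smoothness}) applies directly.

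For $1<p<2$ I use duality. Since (\ref{ellipticity})--(\ref{smoothness}) are preserved under $A\mapsto A^*$, the adjoint $\mathcal{L}_\varep^*$ lies in $\Lambda(\mu,\lambda,\tau)$ and satisfies the $W^{1,p'}$ estimate established above for $p'=p/(p-1)>2$. Testing the weak form of (\ref{W-1-p}) against Neumann problems for $\mathcal{L}_\varep^*$ with $L^{p'}$ data then produces (\ref{W-1-p-estimate}) in the original range.

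The principal obstacle is the boundary step of the RH argument, specifically the construction and control of the Neumann corrector. Unlike the Dirichlet case, where the first-order two-scale expansion $u_0+\varep\chi(x/\varep)\cdot\nabla u_0$ already has small boundary trace, the Neumann case requires an additional boundary-layer term to cancel the oscillatory conormal derivative of this expansion on $\partial\Omega\cap B$. Obtaining uniform $W^{1,p}$ bounds on that correction with the correct scaling in $\varep/r$ --- and hence an approximation error small enough to drive the real-variable iteration --- is the technical crux of the proof.
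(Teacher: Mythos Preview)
Your overall architecture --- base case $p=2$, real-variable method for $p>2$ via a reverse H\"older inequality for local homogeneous solutions, duality for $1<p<2$ --- matches the paper's. The handling of the data is organized a bit differently (the paper treats $f$, $F$, $g$ by three separate duality lemmas rather than absorbing $F$ and $g$ into $f$ up front), but that is cosmetic.

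The substantive divergence is in how you propose to prove the reverse H\"older inequality, and here you are making the problem much harder than it is. You propose a two-scale comparison with the homogenized solution plus a Neumann boundary-layer corrector, and you correctly flag control of that corrector as the ``technical crux.'' But this is precisely the machinery the paper builds for the \emph{Lipschitz} estimate (Theorem~\ref{Lipschitz-estimate-theorem}), and there it relies on Rellich estimates, hence on the symmetry hypothesis $A^*=A$ --- a hypothesis \emph{not} assumed in Theorem~\ref{W-1-p-theorem}. If you follow your own plan you will either need symmetry or some independent route to bounding the Neumann corrector, neither of which you have.

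The paper's route to the reverse H\"older inequality is far simpler and avoids correctors entirely. One first proves a uniform boundary \emph{H\"older} estimate (Theorem~\ref{boundary-holder-theorem}) by compactness alone; since H\"older continuity is a zeroth-order statement, the compactness iteration compares $u_\varep$ directly to a constant-coefficient solution with no corrector needed. Combining this with the interior Lipschitz estimate gives the pointwise bound
\[
|\nabla u_\varep(x)|\le C_\gamma\left(\frac{r}{\delta(x)}\right)^{\gamma}\left(\average_{2B\cap\Omega}|\nabla u_\varep|^2\right)^{1/2},\qquad \delta(x)=\mathrm{dist}(x,\partial\Omega),
\]
for every $\gamma\in(0,1)$. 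Choosing $\gamma$ with $p\gamma<1$ and integrating over $B\cap\Omega$ yields the reverse H\"older inequality for every $p>2$. In short, your ``principal obstacle'' is not an obstacle for this theorem: you are importing the difficulty of the Lipschitz estimate into a result that only requires H\"older regularity.
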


\begin{thm}[Lipschitz estimates]
\label{Lipschitz-estimate-theorem}
Suppose that $A\in \Lambda(\mu,\lambda,\tau)$ and $A^*=A$.
Let $\Omega$ be a bounded $C^{1,\alpha}$ domain, $0<\eta<\alpha<1$ and $q>d$.
Then, for any $g\in C^{\eta} (\partial\Omega)$ and $F\in L^q(\Omega)$ with 
$\int_\Omega F +\int_{\partial\Omega} g=0$, 
 the weak solutions to 
\begin{equation}\label{Neumann-problem-3}
\left\{
\aligned
\mathcal{L}_\varep (u_\varep) & =F&   & \text{ in }\Omega,\\
\frac{\partial u_\varep}{\partial\nu_\varep} & = g  & &  \text{ on } \partial\Omega,\\
|\nabla u_\varep| & \in L^\infty(\Omega),
\endaligned
\right.
\end{equation}
satisfy the estimate
\begin{equation}\label{Lipschitz-estimate}
\|\nabla u_\varep\|_{L^\infty(\Omega)}
\le C \big\{  \| g\|_{C^{\eta}(\partial\Omega)} +\| F\|_{L^q(\Omega)}\big\},
\end{equation}
where $C>0$ depends only on $d$, $m$, 
$\eta$, $q$, $\mu$, $\lambda$, $\tau$ and $\Omega$.
\end{thm}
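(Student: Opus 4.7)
My plan is to follow the Avellaneda--Lin compactness method, adapted from the Dirichlet setting to Neumann boundary conditions. The interior Lipschitz estimate for $\mathcal{L}_\varep$ is essentially available from the Avellaneda--Lin interior theory and does not see the boundary, so the core issue is obtaining a Lipschitz bound at a boundary point $x_0\in\partial\Omega$. After translating $x_0$ to the origin and flattening $\partial\Omega$ locally by a $C^{1,\alpha}$ change of variables, the task reduces to bounding $|\nabla u_\varep(0)|$ in terms of the $L^2$ norm of $u_\varep$ on $D_1=B_1\cap\Omega$, the $C^\eta$ norm of $g$ on $\Delta_1=B_1\cap\partial\Omega$, and the $L^q$ norm of $F$.

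The central tool is a one-step improvement lemma: there exist $\theta\in (0,1/4)$ and $\varep_0>0$, depending only on the structural data, such that if $0<\varep<\varep_0$ and $u_\varep$ solves the Neumann problem on $D_1$ with suitably normalized data, then there is an affine function $L$ satisfying the homogenized conormal condition on the tangent plane at $0$ with
\[
\left(\frac{1}{|D_\theta|}\int_{D_\theta}|u_\varep-L|^2\right)^{1/2}\le \theta^{1+\eta/2}.
\]
This is proved by contradiction: negating it yields a sequence $(\varep_k, u_k, g_k, F_k)$ violating the conclusion. Energy estimates give uniform $H^1$ bounds, so along a subsequence $u_k\to u_0$ strongly in $L^2(D_1)$ and weakly in $H^1(D_1)$. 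Standard homogenization theory for Neumann problems identifies $u_0$ as a solution of the constant-coefficient system $\mathcal{L}_0 u_0 = F_0$ on $D_1$ with the limiting Neumann data $g_0$ on $\Delta_1$. Classical $C^{1,\eta}$ Schauder estimates for constant-coefficient elliptic systems on $C^{1,\alpha}$ domains then yield an affine approximation of $u_0$ at scale $\theta$, contradicting the negation once $\theta$ is chosen small enough.

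Iterating the lemma on dyadic scales $\theta^j$ down to $j^*\sim \log(\varep_0/\varep)/\log\theta$ produces a Campanato-type bound
\[
\sup_{\varep/\varep_0\le r\le 1} r^{-(1+\eta/2)}\inf_{L\text{ affine}}\left(r^{-d}\int_{D_r}|u_\varep-L|^2\right)^{1/2}\le C\bigl\{ \|g\|_{C^\eta(\Delta_1)}+\|F\|_{L^q(D_1)}+\|u_\varep\|_{L^2(D_1)}\bigr\}.
\]
Below scale $\varep$, the blow-up $v(y)=\varep^{-1}u_\varep(\varep y)$ satisfies an elliptic system with $C^\lambda$ coefficients and $C^{1,\alpha}$ boundary in the rescaled coordinates at unit scale, so classical Agmon--Douglis--Nirenberg Schauder theory applies and converts the $L^2$ control at unit scale into a pointwise Lipschitz bound. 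Combining this with the interior Avellaneda--Lin estimate through a covering argument, and using the zero-mean normalization $\int_\Omega u_\varep=0$ (restored by subtracting a constant) together with the compatibility condition to absorb the $\|u_\varep\|_{L^2}$ term on the right-hand side, yields (\ref{Lipschitz-estimate}).

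The main obstacle I expect is the homogenization step embedded in the compactness argument: identifying the weak-$H^1$ limit of the oscillating conormal flux $n\cdot A(x/\varep)\nabla u_k$ as the homogenized flux $n\cdot \widehat{A}\,\nabla u_0$, since weak convergence of $\nabla u_k$ is insufficient for this product. The identification proceeds by testing against functions corrected by the cell solutions $\chi_k$ and employing the flux correctors $\phi^{\alpha\beta}_{kij}$ (whose antisymmetry lets the oscillating flux be rewritten as $\widehat{A}\,\nabla u_0$ plus a divergence-free correction that integrates to zero against a test field); the hypothesis $A^*=A$ enters precisely to give the correctors the self-adjoint/duality structure needed for the Neumann boundary condition to pass to the limit. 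A secondary technical issue is that $\partial\Omega$ is only $C^{1,\alpha}$, which constrains the flattening map to $C^{1,\alpha}$ and forces the Schauder ingredient to be used in the sharp form available on such domains.
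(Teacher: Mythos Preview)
Your compactness scheme has a genuine gap at the iteration step, and you have misidentified both the main obstacle and the role of the symmetry hypothesis.

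The one-step improvement lemma you state can indeed be proved by compactness: the limit $u_0$ solves the homogenized constant-coefficient system with Neumann data, and $C^{1,\eta}$ Schauder theory gives an affine approximation at scale $\theta$. The trouble is the \emph{iteration}. To re-apply the lemma at scale $\theta$, you must verify that the rescaled remainder $w(x)=\theta^{-1}\bigl(u_\varep(\theta x)-L(\theta x)\bigr)$ again satisfies the hypotheses. But an affine function $L(x)=a+Bx$ is \emph{not} a solution of $\mathcal{L}_\varep$: one has $\mathcal{L}_\varep(L)=-\text{div}\bigl(A(x/\varep)B\bigr)\neq 0$, so $w$ no longer solves $\mathcal{L}_{\varep/\theta}(w)=0$. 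If instead you subtract the interior-corrected linears $P_j^\beta+\varep\chi_j^\beta(x/\varep)$ (which \emph{are} $\mathcal{L}_\varep$-solutions, as in the Dirichlet case), then the conormal derivative of the subtracted function is $n_i(x)\,a_{ij}^{\alpha\beta}(x/\varep)\bigl(\delta_{j\ell}\delta_{\beta\gamma}+\partial_j\chi_\ell^{\beta\gamma}(x/\varep)\bigr)$, which oscillates at scale $\varep$ and has $C^\eta$ seminorm of order $\varep^{-\eta}$. Either way the normalization required to iterate the one-step lemma fails, and the scheme does not close. This is exactly why the Neumann Lipschitz estimate remained open for twenty years after the Dirichlet case.

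The paper resolves this by introducing a new \emph{Neumann boundary corrector} $\Phi_{\varep,j}^\beta$ solving $\mathcal{L}_\varep(\Phi_{\varep,j}^\beta)=0$ in $\Omega$ with $\partial\Phi_{\varep,j}^\beta/\partial\nu_\varep=\partial P_j^\beta/\partial\nu_0$ on $\partial\Omega$. These functions are simultaneously $\mathcal{L}_\varep$-solutions \emph{and} have non-oscillatory (indeed $C^{0,\alpha}$) conormal data $n_i(x)\hat{a}_{ij}^{\alpha\beta}$, so subtracting $\langle\Phi_\varep,\mathbf{B}_\varep\rangle$ preserves both the equation and the H\"older control of the Neumann data through the iteration. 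The hard step is the a priori bound $\|\nabla\Phi_\varep\|_{L^\infty(\Omega)}\le C$ uniformly in $\varep$; this is where the symmetry $A^*=A$ actually enters, via the $L^2$ Rellich estimate $\|\nabla u_\varep\|_{L^2(\partial\Omega)}\le C\|\partial u_\varep/\partial\nu_\varep\|_{L^2(\partial\Omega)}$, used together with Neumann-function bounds to control $\nabla\bigl(\Phi_\varep-xI-\varep\chi(x/\varep)\bigr)$. The homogenization step you flag as the main obstacle (passing the Neumann condition to the limit) is in fact routine and is carried out in the paper without any use of symmetry.
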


\begin{thm}[Nontangential maximal function estimates]
\label{maximal-function-theorem}
Suppose that $A\in \Lambda (\mu,\lambda,\tau)$ and $A=A^*$. 
Let $\Omega$ be a bounded $C^{1,\alpha}$ domain
and $1<p<\infty$.
Then, for any $g\in L^p(\partial\Omega)$ with mean value zero, 
 the weak solutions to
\begin{equation}\label{Neumann-problem-2}
\left\{
\aligned
\mathcal{L}_\varep (u_\varep) & =0&   & \text{ in }\Omega,\\
\frac{\partial u_\varep}{\partial\nu_\varep} & = g  & &  \text{ on } \partial\Omega,\\
(\nabla u_\varep)^*& \in L^p(\partial\Omega), & & 
\endaligned
\right.
\end{equation}
satisfy the estimate
\begin{equation}\label{maximal-function-estimate}
\|(\nabla u_\varep)^*\|_{L^p(\partial\Omega)}
+\| \nabla u_\varep\|_{L^q(\Omega)}\le C \, \| g\|_{L^p(\partial\Omega)},
\end{equation}
where $q=\frac{pd}{d-1}$ and
$C>0$ depends only on $d$, $m$, $p$, $\mu$, $\lambda$, $\tau$ and $\Omega$.
\end{thm}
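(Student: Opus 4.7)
The plan is to reduce the estimate to two tractable pieces: first establish the case $p=2$ of the boundary nontangential maximal function bound via a Rellich-type identity (where the symmetry hypothesis $A=A^*$ enters crucially), and then extrapolate to the full range $1<p<\infty$ by a real-variable argument exploiting the uniform Lipschitz estimate of Theorem~\ref{Lipschitz-estimate-theorem}. The companion interior bound $\|\nabla u_\varep\|_{L^q(\Omega)}$ with $q=pd/(d-1)$ will fall out of Theorem~\ref{W-1-p-theorem} by a Sobolev embedding on the boundary.

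For the $L^2$ estimate, I would represent $u_\varep$ by a single-layer Neumann potential built from the matrix of fundamental solutions for $\mathcal{L}_\varep$ in $\brd$, and apply the classical Rellich identity obtained by integrating $\partial_k\bigl(a_{ij}^{\alpha\beta}(x/\varep)\,\partial_i u_\varep^\alpha\,\partial_j u_\varep^\beta\bigr)$ against a smooth vector field $h$ with $h\cdot n\ge c_0>0$ on $\partial\Omega$. The hypothesis $A=A^*$ is precisely what lets the resulting identity give two-sided control of $\|\nabla_{\tan} u_\varep\|_{L^2(\partial\Omega)}$ by $\|\partial u_\varep/\partial\nu_\varep\|_{L^2(\partial\Omega)}=\|g\|_{L^2(\partial\Omega)}$, modulo error terms of order $\varep^{-1}(\nabla_y A)(x/\varep)\cdot |\nabla u_\varep|^2$ arising from the $\varep$-scale coefficients. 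These errors are absorbed by inserting the first-order correctors and using the interior $W^{1,2}$ and pointwise Lipschitz bounds. The passage from $\|\nabla u_\varep\|_{L^2(\partial\Omega)}$ to $\|(\nabla u_\varep)^*\|_{L^2(\partial\Omega)}$ is then via standard square-function / area-function equivalences, with Theorem~\ref{Lipschitz-estimate-theorem} supplying the pointwise cone control of $\nabla u_\varep$.

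To upgrade from $p=2$ to $p>2$, I would apply Shen's real-variable $L^p$-extrapolation scheme. Its input is a reverse H\"older inequality for local solutions of $\mathcal{L}_\varep v=0$ in a half-ball with vanishing conormal derivative on the flat portion; Theorem~\ref{Lipschitz-estimate-theorem} applied after rescaling yields
$$\sup_{B(x_0,r)\cap\Omega}|\nabla v|^2 \le \frac{C}{r^d}\int_{B(x_0,2r)\cap\Omega}|\nabla v|^2$$
uniformly in $r,\varep>0$, which is a reverse H\"older inequality of every exponent. Combined with Step~1 this gives $\|(\nabla u_\varep)^*\|_{L^p(\partial\Omega)}\le C\|g\|_{L^p(\partial\Omega)}$ for all $2\le p<\infty$; the range $1<p<2$ follows either by duality with the $L^{p'}$ regularity Dirichlet problem for $\mathcal{L}_\varep^*=\mathcal{L}_\varep$, or by an $H^1$ atomic decomposition. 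The interior $L^q$ bound with $q=pd/(d-1)$ is then immediate from Theorem~\ref{W-1-p-theorem} applied with the boundary data $g$, using the critical boundary Sobolev embedding $L^p(\partial\Omega)\hookrightarrow B^{-1/q,q}(\partial\Omega)$ on the $(d-1)$-dimensional surface.

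The hardest step is the uniform $L^2$ Rellich bound: the oscillating coefficients $A(x/\varep)$ do not fit cleanly into the product rule needed for the Rellich integration, and the error terms of order $\varep^{-1}(\nabla_y A)(x/\varep)\cdot|\nabla u_\varep|^2$ are only large-scale controlled. Making them absorb-able uniformly in $\varep$ requires the homogenization approximation $u_\varep\approx u_0+\varep\,\chi(x/\varep)\nabla u_0$ together with the boundary Lipschitz control of Theorem~\ref{Lipschitz-estimate-theorem} to handle the boundary-layer contribution. Once this $L^2$ estimate is secured, the real-variable extrapolation in Step~2 is reasonably routine.
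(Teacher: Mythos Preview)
Your overall architecture matches the paper's: the case $p=2$ is taken as known from \cite{Kenig-Shen-2}, the range $p>2$ is obtained from a weak reverse H\"older inequality on $\partial\Omega$ (supplied by the boundary Lipschitz estimate, Theorem~\ref{boundary-Lipschitz-theorem}) fed into the real-variable extrapolation scheme, and the range $1<p<2$ is handled by proving an $L^1$ bound for atomic Neumann data and interpolating. The interior $L^q$ bound is exactly as you say, via Theorem~\ref{W-1-p-theorem} and the embedding $L^p(\partial\Omega)\hookrightarrow B^{-1/q,q}(\partial\Omega)$.

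The one genuine problem is a circularity in your treatment of the $p=2$ step. You propose to absorb the $\varep^{-1}(\nabla_y A)(x/\varep)|\nabla u_\varep|^2$ error terms in the Rellich identity, and to pass from $\|\nabla u_\varep\|_{L^2(\partial\Omega)}$ to $\|(\nabla u_\varep)^*\|_{L^2(\partial\Omega)}$, by invoking Theorem~\ref{Lipschitz-estimate-theorem}. But in the paper's logical order the Rellich estimate \eqref{Rellich-estimate} is an \emph{input} to Theorem~\ref{Lipschitz-estimate-theorem}: it is used in Lemma~\ref{6.2-lemma} to bound the Neumann corrector $\Phi_\varep$, which is the crux of the boundary Lipschitz proof. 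So you cannot use Theorem~\ref{Lipschitz-estimate-theorem} to establish the $L^2$ Rellich/nontangential estimate without begging the question. The paper avoids this by treating the $p=2$ case as a black box from \cite{Kenig-Shen-2}, where the Rellich estimate and the $L^2$ nontangential bound are proved by layer-potential methods (Coifman--McIntosh--Meyer bounds plus Rellich-based invertibility of $\pm\tfrac12 I+\mathcal{K}_\varep$), entirely independently of any boundary Lipschitz control. If you want to sketch the $p=2$ argument, that is the route to indicate; the ``square-function/area-function equivalence plus Theorem~\ref{Lipschitz-estimate-theorem}'' mechanism you describe is not how the passage to $(\nabla u_\varep)^*$ is actually made.

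A minor addendum on $1<p<2$: the paper carries out the atomic argument explicitly, and the key quantitative input there is the H\"older-type decay of $\nabla_y\{N_\varep(x,y)-N_\varep(z,y)\}$ from Theorem~\ref{Neumann-theorem-5.2}, not the Lipschitz estimate. Your mention of duality with the $L^{p'}$ regularity problem is a legitimate alternative, but note that in this paper the regularity problem (Section~10) is itself proved by the same atomic-plus-extrapolation template, so the two routes are closer than they look.
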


A few remarks on notation are in order.
In Theorem \ref{W-1-p-theorem}, $B^{-1/p, p}(\partial\Omega)$ 
is the dual of the Besov space $B^{1/p, p^\prime}(\partial\Omega)$
on $\partial\Omega$, 
where $p^\prime=\frac{p}{p-1}$,
and $<g^\beta, 1>$ denotes the action of $g^\beta$ on the function $1$.
By a weak solution $u$ to (\ref{W-1-p}), we mean that $u\in W^{1,p}(\Omega)$ and satisfies
\begin{equation}\label{weak-formulation}
\int_\Omega a_{ij}^{\alpha\beta}
\left(\frac{x}{\varep}\right) \frac{\partial u_\varep^\beta}
{\partial x_j}
\cdot \frac{\partial \varphi^\alpha}{\partial x_i}\, dx
=\int_\Omega \left\{ -f_i^\alpha 
\frac{\partial \varphi^\alpha}{\partial x_i}
+F^\alpha\varphi^\alpha \right\}\, dx 
+<g^\alpha, \varphi^\alpha>,
\end{equation}
for any $\varphi =(\varphi^\alpha)\in C_0^1 (\br^d)$.
In Theorem \ref{maximal-function-theorem} we have used
$(\nabla u_\varep)^*$ to denote the nontangential maximal function of $\nabla u_\varep$.
We point out that the Lipschitz estimate in Theorem \ref{Lipschitz-estimate-theorem}
is sharp. Even with $C^\infty$ data, one cannot expect higher order uniform estimates
of $u_\varep$, as $\nabla u_\varep$ is known to converge to $\nabla u_0$ only weakly. 
As a result, the use of nontangential maximal functions in Theorem \ref{maximal-function-theorem}
to describe the sharp regularity of solutions with $L^p$ Neumann data appears to be natural and
necessary.
Also note that under the conditions (\ref{ellipticity}) and
(\ref{smoothness}), the existence and uniqueness (modulo additive 
constants) of solutions to (\ref{W-1-p}), (\ref{Neumann-problem-3})
and (\ref{Neumann-problem-2}) with sharp regularity
estimates are more or less well known 
(see e.g. \cite{Agmon-1959,Agmon-1964,Taylor-tools}).
What is new here is that with the additional periodicity assumption (\ref{periodicity}),
the constants $C$ in the regularity estimates (\ref{W-1-p-estimate}), (\ref{Lipschitz-estimate})
and (\ref{maximal-function-estimate}) are independent of
$\varep$. 

In the case of the Dirichlet boundary condition $u_\varep =g$ on $\partial\Omega$
with $g\in B^{1/p^\prime, p}(\partial\Omega)$ or $g\in C^{1,\eta}(\partial\Omega)$,
results analogous to Theorems \ref{W-1-p-theorem} and \ref{Lipschitz-estimate-theorem}
were established by Avellaneda and Lin in \cite{AL-1987,AL-1991}
for $C^{1,\alpha}$ domains (without the assumption $A^*=A$). They also obtained the
nontangential maximal function estimate $\|(u_\varep)^*\|_{L^p(\partial\Omega)}
\le C\| g\|_{L^p(\partial\Omega)}$ for solutions of $\mathcal{L}_\varep(u_\varep)=0$ in $\Omega$
(the case $m=1$ was given in \cite{AL-1987-ho}).
As it was noted in \cite{AL-1987}, uniform regularity estimates,
in addition to being of independent interest, have applications 
to homogenization of boundary control of distributed systems \cite{Lions-1985-IMA,
Lions-1988-SIAM, AL-1989-ho}.
Furthermore, they can be used to estimate convergence rates of $u_\varep\to u_0$ as
$\varep\to 0$. In particular, it was proved in \cite{AL-1987} that
$\| u_\varep-u_0\|_{L^\infty(\Omega)} =O(\varep)$,
if $\mathcal{L}_\varep (u_\varep)=\text{\rm div}(f)$ in $\Omega$,
$u_\varep =g$ on $\partial\Omega$, and $f,g$ are in certain function spaces.
Extending the Lipschitz estimate (\ref{Lipschitz-estimate})
to solutions with Neumann boundary conditions has been a longstanding
open problem.
The main reason why it is more difficult to deal with solutions 
with Neumann boundary
conditions in Theorem \ref{Lipschitz-estimate-theorem} than solutions with
Dirichlet boundary conditions in \cite{AL-1987,AL-1991}
is that now the boundary conditions in (\ref{Neumann-problem-3})
are $\varep$-dependent,
which causes new difficulties in the estimation of the appropriate boundary correctors.
We have overcome this difficulty, in the presence of symmetry,
thanks to the Rellich estimates obtained in \cite{Kenig-Shen-1, Kenig-Shen-2}.
Neumann boundary conditions are important in applications
of homogenization (see e.g. \cite{bensoussan-1978, Jikov-1994,Lions-1988-SIAM, Oleinik-1992}). 
The uniform estimates we establish in this paper can be used to study
convergence problems for solutions $u_\varep$, eigenfunctions and
eigenvalues with Neumann boundary conditions. 
As an example, 
let $w_\varep (x) =u_\varep (x)-u_0(x) -\varep \chi (\frac{x}{\varep}) \nabla u_0 (x)$,
where $\chi$ denotes the matrix of correctors for $\mathcal{L}_\varep$ in $\br^d$.
It can be shown that $w_\varep=w_\varep^{(1)} +w_\varep^{(2)}$, where
$\| \nabla w_\varep^{(1)}\|_{L^p(\Omega)} \le C_p \, \varep \|\nabla^2 u_0\|_{L^p(\Omega)}$
for any $1<p<\infty$, and $|\nabla w_\varep^{(2)} (x)|\text{\rm dist}(x, \partial\Omega)
\le C\varep \|\nabla u_0\|_{L^\infty(\partial\Omega)}$ for any $x\in \Omega$.
We will return to this in a forthcoming publication.

Let $N_\varep(x,y)$ denote the matrix of Neumann functions 
for $\mathcal{L}_\varep$
in $\Omega$ (see Section 5).
As a consequence of our uniform H\"older and Lipschitz estimates, we obtain the following 
bounds,
\begin{equation}\label{Neumann-function-bound-0}
\aligned
|N_\varep (x,y)| & \le \frac{C}{|x-y|^{d-2}},\\
|\nabla_x N_\varep (x,y)|+|\nabla_y N_\varep (x,y)| &\le \frac{C}{|x-y|^{d-1}},\\
|\nabla_x\nabla_y N_\varep (x,y)| &\le \frac{C}{|x-y|^d},
\endaligned
\end{equation}
for $d\ge 3$ (see Section 8).
In view of the work of Avellaneda and Lin on homogenization
of Poisson's kernel \cite{AL-1989-ho},
we remark that the techniques we develop in this paper
may also be used to establish asymptotics of
$ N_\varep (x,y)$.
This line of research,
 together with the convergence results mentioned above, 
will be developed in a forthcoming paper.

We should mention that the case $p=2$ in Theorem \ref{maximal-function-theorem}
is contained in \cite{Kenig-Shen-2}.
In fact, for the elliptic system $\mathcal{L}_\varep (u_\varep)=0$ 
in a bounded Lipschitz domain $\Omega$,
 the Neumann problem 
with the uniform estimate $\|(\nabla u_\varep)^*\|_{L^p(\partial\Omega)}
\le C\|\frac{\partial u_\varep}{\partial\nu_\varep}\|_{L^p(\partial\Omega)}$ 
and
the Dirichlet problem with the estimate $\|(u_\varep)^*\|_{L^p(\partial\Omega)}
\le C\| u_\varep\|_{L^p(\partial\Omega)}$, as well as the so-called regularity problem
with the estimate 
$\| (\nabla u_\varep)^*\|_{L^p(\partial\Omega)}\le C\| \nabla_{tan} u_\varep\|_{L^p(\partial\Omega)}$,
were solved recently by Kenig and Shen in \cite{Kenig-Shen-2} for $p$ close to $2$
(see \cite{Kenig-book} for references on boundary value problems
in Lipschitz domains for elliptic equations with constant coefficients).
The results in \cite{Kenig-Shen-2} are proved
under the assumption that $A\in \Lambda(\mu, \lambda, \tau)$ and $A^*=A$,
by the method of layer potentials.
In the case of a single equation ($m=1$), 
the $L^p$ solvabilities of Neumann, Dirichlet and regularity problems 
in Lipschitz domains with uniform nontangential
maximal function estimates were established in \cite{Kenig-Shen-1}
for the sharp ranges of $p$'s 
(the result for Dirichlet problem in Lipschitz domains
was obtained earlier by B. Dahlberg \cite{Dahlberg-personal}, 
using a different approach; see the appendix to \cite{Kenig-Shen-1} 
for Dahlberg's proof).
The results in \cite{Kenig-Shen-1, Kenig-Shen-2} rely on uniform
Rellich estimates $\|\frac{\partial u_\varep}{\partial\nu_\varep}\|_{L^2(\partial\Omega)}
\approx \|\nabla_{tan} u_\varep\|_{L^2(\partial\Omega)}$
for solutions of $\mathcal{L}_\varep (u_\varep)=0$ in a Lipschitz domain $\Omega$.
We point out that one of the key steps in the proof of Theorem \ref{Lipschitz-estimate-theorem}
uses the Rellich estimate $\|\nabla u_\varep\|_{L^2(\partial\Omega)}
\le C\|\frac{\partial u_\varep}{\partial\nu_\varep}\|_{L^2(\partial\Omega)}$
in a crucial way.

We now describe the key ideas in the proofs of our main results.
To show Theorem \ref{W-1-p-theorem}, we first establish the uniform 
boundary H\"older estimate for local solutions,
\begin{equation}\label{local-Holder-estimate-0}
\| u_\varep\|_{C^{0,\gamma}(B(Q, \rho)\cap\Omega)}
\le C \rho^{-\gamma} 
 \left(\average_{B(Q,2\rho)\cap\Omega}
 |u_\varep|^2 \, dx\right)^{1/2},
\end{equation}
for any $\gamma\in (0,1)$,
where $\mathcal{L}_\varep (u_\varep)=0$ in $B(Q,3\rho)\cap\Omega$
and $\frac{\partial u_\varep}{\partial\nu_\varep}=0$
on $B(Q,3\rho)\cap\partial\Omega$ for some $Q\in \partial\Omega$ and $0<\rho<c$.
The proof of (\ref{local-Holder-estimate-0}) uses a compactness method,
which was developed by Lin and Avellaneda
in \cite{AL-1987,AL-1989-II,AL-1989-ho} for homogenization problems,
with basic ideas originating from the 
regularity theory in the calculus of variations
and minimal surfaces.
As in the case of Dirichlet boundary condition, boundary correctors
are not needed for H\"older estimates with Neumann boundary condition.
From (\ref{local-Holder-estimate-0}) one may deduce the weak
reverse H\"older inequality,
\begin{equation}\label{reverse-Holder-0}
\left(\average_{B(Q, \rho)\cap\Omega} |\nabla u_\varep|^p\, dx\right)^{1/p}
\le C_p
\left(\average_{B(Q, 2\rho)\cap\Omega} |\nabla u_\varep|^2 \, dx\right)^{1/2}
\end{equation}
for any $p>2$. By \cite{Geng} this implies that $\|\nabla u_\varep\|_{L^p(\Omega)}
\le C\| f\|_{L^p(\Omega)}$ for $p>2$, if $\mathcal{L}_\varep (u_\varep)
=\text{\rm div}(f)$ in $\Omega$ and $\frac{\partial u_\varep}{\partial\nu_\varep}
=-n\cdot f$ on $\partial\Omega$.
The rest of Theorem \ref{W-1-p-theorem} follows by some duality arguments.

The proof of Theorem \ref{Lipschitz-estimate-theorem} is much more difficult than
that of Theorem \ref{W-1-p-theorem}. Assume that $0\in \partial\Omega$.
After a simple rescaling, the heart of matter here is to establish the uniform boundary
Lipschitz estimate for local solutions,
\begin{equation}\label{Lipschitz-estimate-0}
\|\nabla u_\varep\|_{L^\infty (B(0, 1)\cap\Omega)}
\le C\big\{  \| u_\varep\|_{L^\infty (B(0, 2)\cap \Omega)}
+\| g\|_{C^\eta(B(0,2)\cap\partial\Omega)}\big\},
\end{equation}
for some $\eta>0$, where $\mathcal{L}_\varep (u_\varep)=0$
in $B(0, 3)\cap \Omega$ and $\frac{\partial u_\varep}{\partial\nu_\varep}
=g$ on $B(0,3)\cap\partial\Omega$.
This problem  has been open for more than 20 years, ever since
the same estimate was established in \cite{AL-1987} for
local solutions with the Dirichlet boundary condition
$u_\varep=0$ in $B(0,3)\cap\partial\Omega$.
Our proof of (\ref{Lipschitz-estimate-0}) 
also uses the compactness method mentioned
above. However, as in the case of the Dirichlet boundary condition, 
one needs to introduce suitable boundary correctors in order to fully
take advantage of the fact that 
solutions of the homogenized system are in  $C^{1,\eta} (B(0,2)\cap \Omega)$.
A major technical breakthrough of this paper
 is the introduction and estimates of
such correctors $\Phi_\varep =(\Phi_{\varep, j}^{\alpha\beta})$, where
for each $1\le j\le d$ and $1\le \beta\le m$, $\Phi_{\varep, j}^\beta
=(\Phi_{\varep, j}^{1\beta}, \dots, \Phi_{\varep, j}^{m\beta})$ is the solution to
the Neumann problem
\begin{equation}\label{corrector-0}
\left\{
\begin{aligned}
\mathcal{L}_\varep (\Phi_{\varep, j}^\beta) & =0 &\qquad & \text{ in } \Omega,\\
\frac{\partial}{\partial \nu_\varep} \big( \Phi^\beta_{\varep, j}\big)
& =\frac{\partial }{\partial\nu_0} \big( P_j^\beta\big) & \qquad &\text{ on } \partial\Omega,\\
\Phi_{\varep, j}^\beta (0) & =0.
\end{aligned}
\right.
\end{equation}
Here $P_j^\beta =x_j (0, \cdots, 1, \dots, 0)$
with $1$ in the $\beta^{th}$ position
and $\frac{\partial w}{\partial \nu_0}$
denotes the conormal derivative of $w$ associated with the homogenized 
operator $\mathcal{L}_0$.
Note that by the boundary H\"older estimate,
$\Phi_{\varep, j}^{\alpha\beta} (x) \to x_j\delta_{\alpha\beta}$
uniformly in $\Omega$ as $\varep\to 0$. 
To carry out an elaborate compactness scheme in a similar fashion to that in \cite{AL-1987},
one needs to prove the uniform Lipschitz estimate for the solution of (\ref{corrector-0}),
\begin{equation}\label{corrector-Lipschitz}
\| \nabla \Phi_\varep\|_{L^\infty(\Omega)}\le C.
\end{equation}
The proof of (\ref{corrector-Lipschitz}) relies on two crucial observations. First, one can use
Rellich estimates as well as boundary H\"older estimates to show that
\begin{equation}\label{Neumann-estimate-0}
\int_{\partial\Omega} |\nabla_y \big\{
N_\varep (x,y)-N_\varep (z,y)\}|\, d\sigma (y) \le C,
\end{equation}
where $|x-z|\le c\,  \text{\rm dist}(x, \partial\Omega)$. 
Secondly, if $w_\varep(x)=\Phi_\varep (x) -x I -\varep \chi (x/\varep)$, then
$\frac{\partial w_\varep}{\partial \nu_\varep}$ can be represented as a sum of tangential
derivatives of $g_{ij}$ with $\|g_{ij}\|_{L^\infty(\partial\Omega)}
\le C\varep$.
Since $\mathcal{L}_\varep (w_\varep)=0$ in $\Omega$,
it follows from these observations as well as interior estimates that
$|\nabla w_\varep (x)|\le C\varep [\text{\rm dist} (x,\partial\Omega)]^{-1}$.
This gives the estimate $|\nabla \Phi_\varep (x)|\le C$, if
$\text{dist}(x, \partial\Omega)> \varep$.
The remaining case $\text{dist}(x,\partial\Omega)\le \varep$ follows
by a blow-up argument.
See Section 7 for details.
We note that the symmetry condition $A^*=A$ is only needed
for using the Rellich estimates.

With the Lipschitz estimate in Theorem \ref{Lipschitz-estimate-theorem}
at our disposal, Theorem \ref{maximal-function-theorem}
for $p>2$ follows from the case $p=2$ (established in \cite{Kenig-Shen-2} for
Lipschitz domains), by a real variable method originating in \cite{Caffarelli-1998} and
further developed in \cite{Shen-2005-bounds,
Shen-2006-ne,Shen-2007-boundary}.
The case $1<p<2$ is handled by establishing $L^1$ estimate for solutions with 
boundary data in the Hardy space $H^1(\partial\Omega)$ and then interpolating 
it with $L^2$ estimates,
as in the case of Laplacian \cite{Dahlberg-Kenig-1987} (see Section 9).
In view of the Lipschitz estimates in \cite{AL-1987}
for local solutions with Dirichlet boundary condition and
the $L^2$ estimates in \cite{Kenig-Shen-2},
a similar approach also solves the $L^p$ regularity 
problem with the estimate $\|(\nabla u_\varep)^*\|_{L^p(\partial\Omega)}
\le C\| \nabla_{tan} u_\varep\|_{L^p(\partial\Omega)}$ in a $C^{1,\alpha}$ domain $\Omega$ 
for all $1<p<\infty$ (see Section 10).
We further note that the same approach works
 equally well for the exterior domain
$\Omega_-=\br^d\setminus \overline{\Omega}$ and gives the solvabilities
of the $L^p$ Neumann and regularity problems in $\Omega_-$.
Consequently, as in the case of the Laplacian 
on a Lipschitz domain \cite{Verchota-1984,Dahlberg-Kenig-1987},
one may use the $L^p$ estimates in $\Omega$ and $\Omega_-$ and
the method of layer potentials to show that
solutions to the $L^p$ Neumann and regularity problems in $C^{1,\alpha}$
domains may be represented
by single layer potentials with density functions that are uniformly bounded 
in $L^p$.
Similarly, the solutions to the $L^p$ Dirichlet problem may be represented
by double layer potentials with uniformly $L^p$ bounded density functions
(see Section 11).
 
The summation convention will be used throughout the paper. 
Finally we remark that 
we shall make little effort to distinguish vector-valued functions or function spaces 
from their real-valued counterparts. 
This should be clear from the context.

\section{Homogenization and weak convergence}

Let $\mathcal{L}_\varep =-\text{\rm div}(A(x/\varep)\nabla)$ with
matrix $A(y)$ satisfying (\ref{ellipticity})-(\ref{periodicity}).
For each $1\le j\le d$ and $1\le\beta\le m$, let
$\chi_j^\beta =(\chi_j^{1\beta }, \dots, \chi_j^{m\beta})$ be the solution
of the following cell problem:
\begin{equation}\label{cell-problem}
\left\{
\aligned
& \mathcal{L}_1 (\chi_j^\beta)=-\mathcal{L}_1 (P^\beta_j) \quad \text{ in }\brd,\\
&\chi_j^\beta (y) \text{ is periodic with respect to }\mathbb{Z}^d,\\
& \int_{[0,1]^d}
\chi_j^\beta \, dy =0,
\endaligned
\right.
\end{equation}
where $P_j^\beta =P_j^\beta (y)
 =y_j(0, \dots, 1, \dots, 0)$ with $1$ in the $\beta^{th}$ position.
The matrix $\chi=\chi(y) =(\chi_j^{\alpha\beta}(y))$ with
$1\le j\le d$ and $1\le \alpha, \beta\le m$ is called the matrix of 
correctors for $\{ \mathcal{L}_\varep\}$.

With the summation convention
 the first equation in (\ref{cell-problem}) may be written
as
\begin{equation}\label{corrector-equation}
\frac{\partial}{\partial y_i}
\left[ a_{ij}^{\alpha\beta}
+a_{i\ell}^{\alpha\gamma}
\frac{\partial}{\partial y_\ell}\left( \chi_j^{\gamma\beta}\right)\right]=0
\quad
\text{ in }\brd.
\end{equation}
Let $\hat{A} =(\hat{a}_{ij}^{\alpha\beta})$, where $1\le i, j\le d$, $1\le \alpha, \beta\le m$ and
\begin{equation}
\label{homogenized-coefficient}
\hat{a}_{ij}^{\alpha\beta}
=\int_{[0,1]^d}
\left[ a_{ij}^{\alpha\beta}
+a_{i\ell}^{\alpha\gamma}
\frac{\partial}{\partial y_\ell}\left( \chi_j^{\gamma\beta}\right)\right]
\, dy.
\end{equation}
Then $\mathcal{L}_0=-\text{div}(\hat{A}\nabla)$ is the so-called
homogenized operator associated with 
$\{ \mathcal{L}_\varep\}$ (see \cite{bensoussan-1978}).
We need the following homogenization result.

\begin{lemma}
\label{lemma-2.1}
Let $\Omega$ be a bounded Lipschitz domain in $\brd$ and
$$
\text{\rm div}\left[A_k\left({x}/\varep_k\right)
\nabla u_k \right]=f\in W_0^{-1,2}(\Omega) \quad \text{ in } \Omega,
$$
where $\varep_k\to 0$ and the matrix $A_k(y)$ 
satisfies (\ref{ellipticity})-(\ref{periodicity}).
Suppose that $u_k \to u_0$ strongly in $L^2(\Omega)$,
$\nabla u_k\to \nabla u_0$ weakly in $L^2(\Omega)$ and
$A_k \big({x}/{\varep_k}\big) \nabla u_k$ converges weakly in $L^2(\Omega)$.
Also assume that the constant matrix $\hat{A_k}$, defined by (\ref{homogenized-coefficient})
(with $A$ replaced by $A_k$), converges to $A^0$.
Then 
$$
A_k \left({x}/{\varep_k}\right)\nabla u_k \to A^0 \nabla u_0 \
\text{ weakly in } L^2(\Omega)
$$
and $\text{\rm div}(A^0\nabla u_0)=f$ in $\Omega$.
\end{lemma}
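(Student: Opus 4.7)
\medskip

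\noindent\textbf{Proof plan (sketch).} Let $\xi_0 \in L^2(\Omega)$ denote the weak limit of $A_k(x/\varepsilon_k)\nabla u_k$; since distributional divergence is continuous under weak convergence, passing to the limit in the equation gives $\mathrm{div}(\xi_0)=f$ immediately. The entire task reduces to identifying $\xi_0 = A^0\nabla u_0$. The plan is to use Tartar's method of oscillating test functions, via the adjoint correctors, to turn the product of two weakly convergent sequences into a sum of products in which at least one factor converges strongly.

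For each $1\le j\le d$ and $1\le \beta \le m$, let $\chi_{k,j}^{*,\beta}$ be the periodic corrector associated with the adjoint matrix $A_k^*$, and set
\[
V_k^\beta(x) \;=\; P_j^\beta(x) + \varepsilon_k\,\chi_{k,j}^{*,\beta}(x/\varepsilon_k),
\qquad \text{so that } \mathcal{L}_{\varepsilon_k}^* (V_k^\beta)=0 \text{ in } \mathbb{R}^d.
\]
The uniform ellipticity yields uniform $W^{1,2}_{\mathrm{per}}$ bounds on $\chi_{k,j}^{*,\beta}$, hence $V_k^\beta\to P_j^\beta$ uniformly in $\Omega$ and $\nabla V_k^\beta$ is bounded in $L^2_{\mathrm{loc}}$. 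The key homogenization fact I will use is that the flux satisfies
\[
A_k^*(x/\varepsilon_k)\,\nabla V_k^\beta \;\rightharpoonup\; \widehat{A_k^*}\,\nabla P_j^\beta
\qquad \text{weakly in } L^2_{\mathrm{loc}}(\Omega),
\]
because for a purely periodic sequence this flux converges weakly to its periodic mean. Together with the standard identity $\widehat{A_k^*}=(\widehat{A_k})^{*}$ and the hypothesis $\widehat{A_k}\to A^0$, this weak limit equals $(A^0)^*\nabla P_j^\beta$.

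For $\phi\in C_c^\infty(\Omega)$, I will plug the vector-valued test function $\phi\,V_k^\beta$ into the weak form of the $u_k$-equation and the test function $\phi\, u_k$ into the weak form of $\mathcal{L}_{\varepsilon_k}^*(V_k^\beta)=0$. Because
\[
\int_\Omega (A_k\nabla u_k)\cdot \nabla V_k^\beta\,\phi
\;=\; \int_\Omega (A_k^*\nabla V_k^\beta)\cdot \nabla u_k\,\phi ,
\]
the two resulting identities share a common ``doubly oscillating'' term that cancels upon subtraction. What remains is
\[
\int_\Omega (A_k\nabla u_k)\cdot\nabla\phi\,V_k^\beta \;-\;\int_\Omega (A_k^*\nabla V_k^\beta)\cdot\nabla\phi\,u_k \;=\; -\langle f,\phi V_k^\beta\rangle,
\]
in which every product pairs a weakly convergent factor with a strongly convergent one (using $u_k\to u_0$ in $L^2$ and $V_k^\beta\to P_j^\beta$ uniformly). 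Passing to the limit yields
\[
\int_\Omega \xi_0\cdot\nabla\phi\,P_j^\beta \;-\; \int_\Omega (A^0)^*\nabla P_j^\beta\cdot\nabla\phi\,u_0 \;=\; -\langle f,\phi P_j^\beta\rangle.
\]

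Finally I use the already-established $\mathrm{div}\,\xi_0=f$ to rewrite $-\langle f,\phi P_j^\beta\rangle = \int \xi_0\cdot\nabla(\phi P_j^\beta)$; expanding this and cancelling the common $\int \xi_0\cdot\nabla\phi\,P_j^\beta$ term reduces the identity to
\[
\int_\Omega (\xi_0)_j^\beta\,\phi\,dx \;=\; \int_\Omega (A^0)_{ji}^{\beta\alpha}\,\partial_i u_0^\alpha\,\phi\,dx
\]
for every $\phi\in C_c^\infty(\Omega)$ and every $(j,\beta)$, which is exactly $\xi_0=A^0\nabla u_0$ a.e., and then $\mathrm{div}(A^0\nabla u_0)=f$ follows. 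The main obstacle is the middle step --- Tartar's cancellation via the adjoint corrector --- and, within it, justifying the weak convergence of $A_k^*(x/\varepsilon_k)\nabla V_k^\beta$ for a $k$-dependent coefficient matrix, which is where the hypothesis $\widehat{A_k}\to A^0$ is essential.
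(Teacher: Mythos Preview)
Your proposal is correct and is essentially the paper's own argument: both identify the weak limit of the flux via Tartar's oscillating test functions built from the adjoint correctors $P_j^\beta+\varepsilon_k\chi_{k,j}^{*,\beta}(x/\varepsilon_k)$, the only difference being that the paper packages the cancellation step as an appeal to a div--curl/compensated compactness lemma while you carry it out by hand. One cosmetic point: under hypotheses (\ref{ellipticity})--(\ref{periodicity}) alone the correctors for systems need not be in $L^\infty$, so $V_k^\beta\to P_j^\beta$ should be asserted strongly in $L^2$ rather than uniformly---but strong $L^2$ convergence is all your pairing argument actually requires.
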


\begin{proof}
If $A_k$ is independent of $k$, this is a classical result in the theory of
homogenization (see e.g. \cite{bensoussan-1978} or \cite{Chechkin-2007}).
The general case  may be proved by the same energy method. We give a proof here
for the sake of completeness.

Let $A_k =(a_{ij,k}^{\alpha\beta})$,
$\hat{A_k} =(\hat{a}_{ij,k}^{\alpha\beta})$ and
$A^0=(b_{ij}^{\alpha\beta})$.
Suppose that
\begin{equation}\label{definition-of-p}
a_{i\ell, k}^{\alpha\gamma} (x/\varep_k) \frac{\partial u_k^\gamma}{\partial x_\ell}
\to p_i^\alpha (x) \quad \text{ weakly in } L^2(\Omega).
\end{equation}
Clearly, $\text{\rm div}(P)=f$ in $\Omega$, where $P=(p_i^\alpha)$.
For $1\le j, \ell\le d$, $1\le \beta\le m$ and $k=1,2, \dots$, write
\begin{equation}\label{div-curl}
\aligned
& a_{i\ell, k}^{\alpha\gamma}
(x/\varep_k) \frac{\partial u_k^\gamma}{\partial x_\ell}
\cdot \frac{\partial }{\partial x_i}
\left\{ \varep_k \chi_{j,k}^{*\alpha\beta} (x/\varep_k)
+x_j \delta_{\alpha\beta}\right\}\\
& \qquad =\frac{\partial u_k^\gamma}{\partial x_\ell}
\cdot
a_{i\ell, k}^{\alpha\gamma}
\frac{\partial }{\partial x_i}
\left\{ \varep_k \chi_{j,k}^{*\alpha\beta} (x/\varep_k)
+x_j \delta_{\alpha\beta}\right\},
\endaligned
\end{equation}
where $\chi_k^* =(\chi_{j,k}^{*\alpha\beta})$ denotes the matrix of  correctors for
$(\mathcal{L}_\varep^k)^*$, the adjoint operator of $\mathcal{L}_\varep^k=-\text{div}
(A_k(x/\varep)\nabla )$.
By taking the weak limits on the both sides of (\ref{div-curl}) and
using a compensated compactness argument (see e.g. Lemma 5.1 in \cite{Chechkin-2007}),
we obtain
$$
\aligned
& p_i^\alpha (x) \cdot
\int_{[0,1]^d}
\left\{ \frac{\partial }{\partial y_i}
\left[ \chi_{j,k}^{*\alpha\beta} (y)\right]
+\delta_{ij}\delta_{\alpha\beta}\right\}\, dy\\
&\qquad
=\frac{\partial u_0^\gamma}{\partial x_\ell}
\cdot \lim_{k\to\infty}
\int_{[0,1]^d}
a_{i\ell, k}^{\alpha\gamma}
\left\{\frac{\partial}{\partial y_i}
\left[ \chi_{j,k}^{*\alpha\beta}(y)\right]
+\delta_{ij}\delta_{\alpha\beta}\right\}\, dy.
\endaligned
$$
Since
$$
\int_{[0,1]^d}
a_{i\ell, k}^{\alpha\gamma} (y)
\frac{\partial}{\partial y_i}
\left\{ \chi_{j,k}^{*\alpha\beta}(y)\right\} dy
=\int_{[0,1]^d}
a_{ji, k}^{\beta\alpha} (y)\frac{\partial}{\partial y_i}
\left\{ \chi_{\ell, k}^{\alpha\gamma} (y) \right\} dy
$$
(see e.g. \cite[p.122]{bensoussan-1978}),
it follows that
$$
\aligned
p_j^\beta (x)
&=\frac{\partial u_0^\gamma}{\partial x_\ell} \cdot
\lim_{k\to\infty}
\int_{[0,1]^d}
\left\{
a_{j\ell, k}^{\beta\gamma} (y)
+a_{ji, k}^{\beta\alpha}
\frac{\partial}{\partial y_i}
\big[ \chi_{\ell, k}^{\alpha\gamma}(y)\big]\right\} dy\\
&=\frac{\partial u_0^\gamma}{\partial x_\ell}\cdot
\lim_{k\to\infty} \hat{a}^{\beta\gamma}_{j\ell, k}\\
&=b_{j\ell}^{\beta\gamma} \cdot\frac{\partial u_0^\gamma}{\partial x_\ell}.
\endaligned
$$
In view of (\ref{definition-of-p}) this finishes the proof.
\end{proof}

Let $\psi:\mathbb{R}^{d-1}\to \mathbb{R}$ be a $C^{1,\alpha_0}$ function such that
\begin{equation}\label{psi}
\psi (0)=|\nabla\psi (0)|=0
\quad
\text{ and }\quad
\|\nabla \psi\|_{C^{\alpha_0}(\mathbb{R}^{d-1})}
\le M_0,
\end{equation}
where $\alpha_0\in (0,1)$ and $M_0>0$ will be fixed throughout the paper.
 For $r>0$, let
\begin{equation}
\label{definition-of-D}
\aligned
& D(r)=D(r, \psi) = \big\{ (x^\prime, x_d)\in\brd: \
|x^\prime|<r \text{ and } \psi(x^\prime)<x_d<\psi(x^\prime) +r \big\},\\
&\widetilde{D}(r)=\widetilde{D}(r, \psi) = \big\{ (x^\prime, x_d)\in\brd: \
|x^\prime|<r \text{ and } \psi(x^\prime)-r<x_d<\psi(x^\prime) +r \big\},\\
& \Delta (r)
=\Delta(r, \psi)  = 
\big\{ (x^\prime, \psi(x^\prime))\in\br^d: |x^\prime|<r \big\}.
\endaligned
\end{equation}

\begin{lemma}\label{lemma-2.2}
Let $\{\psi_k\}$ be a sequence of $C^{1,\alpha_0}$ functions satisfying (\ref{psi}).
Suppose that
$\psi_k \to \psi_0$ in $C^1(|x^\prime|<r)$ and
 $\{ \| v_k\|_{L^2(D(r, \psi_k))}\}$ is bounded.
Then there exist a subsequence, which we still denote by $\{ v_k\}$, and
$v_0\in L^2(D(r,\psi_0))$ such that
$v_k \to v_0 $ weakly in $
L^2(\Omega)$ for any $\Omega\subset\subset D(r, \psi_0)$.
\end{lemma}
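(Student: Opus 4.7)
The plan is to exploit the $C^1$ (hence uniform) convergence $\psi_k\to\psi_0$ to transfer the uniform $L^2$ bound on the moving domains $D(r,\psi_k)$ to the fixed limit domain $D(r,\psi_0)$, and then to extract a subsequence by an exhaustion-plus-diagonal argument.

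First I would fix an exhaustion $\Omega_1\subset\subset\Omega_2\subset\subset\cdots\subset\subset D(r,\psi_0)$ with $\bigcup_j \Omega_j=D(r,\psi_0)$ (take, for instance, $\Omega_j=\{(x',x_d):|x'|<r-1/j,\ \psi_0(x')+1/j<x_d<\psi_0(x')+r-1/j\}$). Since $\psi_k\to\psi_0$ uniformly on $\{|x'|<r\}$, for each fixed $j$ we have $\Omega_j\subset D(r,\psi_k)$ for all $k\ge K_j$. Consequently
\[
\|v_k\|_{L^2(\Omega_j)}\le \|v_k\|_{L^2(D(r,\psi_k))}\le C
\]
uniformly in $k\ge K_j$, where $C$ is the assumed bound.

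Next, by the weak compactness of bounded sets in $L^2(\Omega_1)$, I can pass to a subsequence along which $v_k$ converges weakly in $L^2(\Omega_1)$ to some $v_0^{(1)}\in L^2(\Omega_1)$. Applying the same argument to this subsequence on $\Omega_2$ yields a further subsequence converging weakly in $L^2(\Omega_2)$ to some $v_0^{(2)}\in L^2(\Omega_2)$. Iterating and taking the diagonal subsequence, I obtain a single subsequence, still denoted $\{v_k\}$, that converges weakly in $L^2(\Omega_j)$ to $v_0^{(j)}$ for every $j$. Uniqueness of weak limits forces $v_0^{(j+1)}\big|_{\Omega_j}=v_0^{(j)}$ a.e., so the $v_0^{(j)}$'s glue together to define $v_0\in L^2_{\text{loc}}(D(r,\psi_0))$. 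By weak lower semicontinuity of the norm, $\|v_0\|_{L^2(\Omega_j)}\le\liminf_k\|v_k\|_{L^2(\Omega_j)}\le C$, and letting $j\to\infty$ via monotone convergence gives $v_0\in L^2(D(r,\psi_0))$.

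Finally, for any $\Omega\subset\subset D(r,\psi_0)$, compactness of $\overline{\Omega}$ gives $\Omega\subset\Omega_j$ for some $j$, and weak convergence on $\Omega_j$ restricts to weak convergence on $\Omega$ (any $\varphi\in L^2(\Omega)$ extends by zero to an element of $L^2(\Omega_j)$). The only mild nuisance is the fact that $v_k$ is \emph{a priori} only defined on the moving domain $D(r,\psi_k)$, so the pairing $\int_{\Omega_j} v_k\varphi$ only makes sense once $k\ge K_j$; this is handled simply by discarding the first $K_j-1$ terms before extracting the subsequence, which does not affect the argument. No other obstacle arises: the statement is a standard weak-compactness exercise once the moving-domain issue is resolved by the uniform convergence of $\psi_k$.
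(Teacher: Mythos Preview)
Your argument is correct. It differs from the paper's proof, which avoids the exhaustion-plus-diagonal step by a change of variables: the paper sets $w_k(x',x_d)=v_k(x',x_d+\psi_k(x'))$ on the fixed domain $D(r,0)=\{|x'|<r,\ 0<x_d<r\}$ (a shear with Jacobian $1$, so $\|w_k\|_{L^2(D(r,0))}=\|v_k\|_{L^2(D(r,\psi_k))}$), extracts a single weakly convergent subsequence $w_k\rightharpoonup w_0$ in $L^2(D(r,0))$, and then defines $v_0(x',x_d)=w_0(x',x_d-\psi_0(x'))$. The verification that $v_k\rightharpoonup v_0$ in $L^2(\Omega)$ for $\Omega\subset\subset D(r,\psi_0)$ then amounts to undoing the (now $k$-dependent) shear. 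The paper's route is shorter---one extraction, no diagonalization---and dovetails with how the convergence is phrased later in Theorem~\ref{compactness-theorem} (where limits are expressed in the flattened coordinates). Your route is more elementary in that it never changes variables, at the cost of the diagonal bookkeeping; either is perfectly adequate for this lemma.
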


\begin{proof}
Let $w_k(x^\prime, x_d)=v_k(x^\prime, x_d+\psi_k(x^\prime))$, defined
on 
$$
D(r,0)=\{ (x^\prime, x_d):\ 
|x^\prime|<r \text{ and } 0<x_d<r\}.
$$
Since $\{ w_k\}$ is bounded in $L^2(D(r, 0))$, there exists a subsequence, which
we still denote by $\{ w_k\}$, such that
$w_k \to w_0$ weakly in $L^2(D(r,0))$.
Let $v_0 (x^\prime, x_d)=w_0 (x^\prime, x_d-\psi_0(x^\prime))$.
It is not hard to verify that
$v_k\to v_0$ weakly in $L^2(\Omega)$ if $\Omega\subset\subset D(r, \psi_0)$.
\end{proof}

The following theorem plays an important role in our compactness argument for the
Neumann problem. Note that (\ref{Neumann-problem-k}) is the weak formulation of
$\text{\rm div}\big( A_k(x/\varep_k)\nabla u_k\big)=0$
in $D(r, \psi_k)$ and $\frac{\partial u_k}{\partial \nu_\varep^k}=g_k$
on $\Delta(r, \psi_k)$.

\begin{thm}\label{compactness-theorem}
Let $\{ A_k(y)\}$ be a sequence of matrices satisfying 
(\ref{ellipticity})-(\ref{periodicity}) and $\{ \psi_k\}$ a sequence of $C^{1,\alpha_0}$
functions satisfying (\ref{psi}).
Suppose that
\begin{equation}\label{Neumann-problem-k}
\int_{D(r, \psi_k)}
A_k(x/\varep_k)\nabla u_k \cdot \nabla \varphi\, dx
=\int_{\Delta(r, \psi_k)}
g_k \cdot \varphi\, d\sigma
\end{equation}
for any $\varphi\in C_0^1(\widetilde{D}(r, \psi_k))$, where
$\varep_k\to 0$ and
\begin{equation}\label{compactness-condition}
\| u_k \|_{W^{1,2}(D(r, \psi_k))}+
\| g_k \|_{L^2(\Delta (r, \psi_k))} \le C.
\end{equation}
Then there exist subsequences of $\{ \psi_k\}$, $\{ u_k\}$ and $\{ g_k\}$, which we still denote by
the same notation,  and a function $\psi_0$ satisfying (\ref{definition-of-p}),
$g_0\in L^2(\Delta(r,\psi_0))$, $u_0\in W^{1,2}(D(r, \psi_0))$, a constant
matrix $A^0$ such that
\begin{equation}\label{compactness-conclusion}
\left\{
\aligned
&\psi_k \to \psi_0 \text{ in } C^1(|x^\prime|<r),\\
& g_k(x^\prime, \psi_k(x^\prime)) \to g_0 (x^\prime, \psi_0 (x^\prime))
\quad \text{ weakly in } L^2 (|x^\prime|< r),\\
& u_k(x^\prime, x_d-\psi_k(x^\prime))
\to u_0 (x^\prime, x_d-\psi_0(x^\prime))
\quad \text{ strongly in } L^2(D(r,0)),
\endaligned
\right.
\end{equation}
and
\begin{equation}\label{compactness-conclusion-1}
\int_{D(r, \psi_0)}
A^0 \nabla u_0 \cdot \nabla \varphi\, dx
=\int_{\Delta(r,\psi_0)}
g_0 \cdot \varphi\, d\sigma
\end{equation}
for any $\varphi\in C_0^1(\widetilde{D}(r, \psi_0))$.
Moreover, the matrix $A^0$, as the limit of a subsequence
of $\{\hat{A}_k\}$,
 satisfies the condition
(\ref{ellipticity}).
\end{thm}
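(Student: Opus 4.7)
The plan is to straighten the varying boundaries with a common change of variables, transfer everything to a fixed cylindrical domain where Rellich compactness applies, then pass to the limit in the interior equation using Lemma \ref{lemma-2.1} and in the boundary term using uniform $C^1$ convergence of the parameterizations.

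First, the family $\{\psi_k\}$ is uniformly bounded in $C^{1,\alpha_0}(|x'|<r)$, so Arzel\`a--Ascoli yields a subsequence with $\psi_k\to\psi_0$ in $C^1(|x'|<r)$, with $\psi_0$ again satisfying (\ref{psi}). Standard bounds from the cell problem (\ref{cell-problem}) show that the homogenized matrices $\hat A_k$ are uniformly bounded and remain elliptic with the same constant $\mu$; extract a further subsequence with $\hat A_k\to A^0$ for a constant matrix $A^0$ satisfying (\ref{ellipticity}).

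Next, define $T_k(x',x_d)=(x',x_d+\psi_k(x'))$, a bi-Lipschitz volume-preserving map $D(r,0)\to D(r,\psi_k)$ with $\|DT_k\|_\infty$ and $\|DT_k^{-1}\|_\infty$ uniformly bounded, and set $w_k=u_k\circ T_k$ and $h_k(x')=g_k(x',\psi_k(x'))$. By (\ref{compactness-condition}), $\{w_k\}$ is bounded in $W^{1,2}(D(r,0))$ and $\{h_k\}$ in $L^2(|x'|<r)$. Passing to subsequences, $w_k\rightharpoonup w_0$ weakly in $W^{1,2}$, $w_k\to w_0$ strongly in $L^2(D(r,0))$ by Rellich, and $h_k\rightharpoonup h_0$ weakly in $L^2(|x'|<r)$. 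Setting $u_0=w_0\circ T_0^{-1}$ and $g_0(x',\psi_0(x'))=h_0(x')$ produces the convergences asserted in (\ref{compactness-conclusion}); Lemma \ref{lemma-2.2} applied componentwise to the gradients gives $\nabla u_k\rightharpoonup\nabla u_0$ weakly in $L^2(\Omega)$ for every $\Omega\subset\subset D(r,\psi_0)$, and ellipticity produces a further diagonal subsequence along which $A_k(x/\varep_k)\nabla u_k$ converges weakly in $L^2_{\mathrm{loc}}(D(r,\psi_0))$.

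To identify the limiting equation, fix $\Omega\subset\subset D(r,\psi_0)$, which is contained in $D(r,\psi_k)$ for all large $k$; choosing $\varphi\in C_0^1(\Omega)$ in (\ref{Neumann-problem-k}) kills the boundary term, so $\mathrm{div}(A_k(x/\varep_k)\nabla u_k)=0$ on $\Omega$. Lemma \ref{lemma-2.1} then identifies the weak limit of $A_k(x/\varep_k)\nabla u_k$ as $A^0\nabla u_0$ and yields $\mathrm{div}(A^0\nabla u_0)=0$ in $\Omega$. For general $\varphi\in C_0^1(\widetilde D(r,\psi_0))$, $C^1$-closeness of $\psi_k$ to $\psi_0$ ensures $\varphi\in C_0^1(\widetilde D(r,\psi_k))$ for large $k$, so (\ref{Neumann-problem-k}) is available. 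Split the left-hand side as an integral over the bulk $\{x_d>\psi_k(x')+\delta\}$, where weak $L^2$ convergence of the flux against $\nabla\varphi$ passes to the limit, plus an integral over the $\delta$-thin strip along the boundary whose modulus is $O(\delta^{1/2})$ uniformly in $k$ by Cauchy--Schwarz and (\ref{compactness-condition}); let $k\to\infty$, then $\delta\to 0$. The right-hand boundary integral is recast on $|x'|<r$, where the surface measure $\sqrt{1+|\nabla\psi_k|^2}\,dx'$ converges uniformly, the transported test functions $\varphi\circ T_k$ converge uniformly, and $h_k\rightharpoonup h_0$ weakly in $L^2$; the product passes to the limit, yielding (\ref{compactness-conclusion-1}). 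The principal obstacle is that the domain, the boundary, the data, and the test space all move with $k$ simultaneously, and the thin-strip decomposition combined with $C^1$ (not merely uniform) convergence of $\psi_k$ is what reduces this to a fixed-domain situation to which Lemma \ref{lemma-2.1} applies.
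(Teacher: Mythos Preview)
Your proof is correct and follows essentially the same route as the paper: extract subsequences via Arzel\`a--Ascoli and Lemma \ref{lemma-2.2}, identify the interior limit through Lemma \ref{lemma-2.1} on compactly contained subdomains, and control the contribution of a thin boundary layer by Cauchy--Schwarz and the uniform $W^{1,2}$ bound. The paper phrases the last step slightly differently---it fixes a Lipschitz $\Omega\subset\subset D(r,\psi_0)$ independent of $k$ and shows the integrals over $D(r,\psi_k)\setminus\Omega$ and $D(r,\psi_0)\setminus\Omega$ are uniformly small---which avoids the minor imprecision in your split where the ``bulk'' region $\{x_d>\psi_k(x')+\delta\}$ still varies with $k$; but this is cosmetic and your argument goes through once you replace that region by $\{x_d>\psi_0(x')+\delta\}$ (contained in $D(r,\psi_k)$ for large $k$).
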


\begin{proof}
We first note that (\ref{compactness-conclusion})
follows directly from (\ref{compactness-condition}) by passing to subsequences.
To prove (\ref{compactness-conclusion-1}), we fix $\varphi\in C_0^1
(\widetilde{D}(r, \psi_0))$.
Clearly, if $k$ is sufficiently large, $\varphi\in C_0^1
(\widetilde{D}(r, \psi_k))$.
It is also easy to check that
$$
\int_{\Delta(r, \psi_k)}
g_k \cdot \varphi\, d\sigma
\to \int_{\Delta(r, \psi_0)} g_0 \cdot \varphi\, d\sigma.
$$
By passing to a subsequence we may assume that
$\hat{A}_k \to A^0$. 
Thus it suffices to show that
\begin{equation}\label{compactness-1}
\int_{D(r, \psi_k)}
A_k (x/\varep_k)\nabla u_k\cdot \nabla\varphi \, dx
\to 
\int_{D(r, \psi_0)}
A^0 \nabla u_0 \cdot \nabla\varphi \, dx.
\end{equation}
In view of Lemma \ref{lemma-2.2} we may assume that $\{ u_k\}$,
$\nabla u_k$, and $A_k(x/\varep_k)\nabla u_k$ converge weakly in $L^2(\Omega)$
for any $\Omega\subset\subset D(r,\psi_0)$.
As a result, $\{ u_k\}$ also converges strongly in $L^2(\Omega)$.

Now, given any $\delta>0$, we may choose a Lipschitz domain $\Omega$ such that
$\overline{\Omega}\subset D(r, \psi_0)$,
\begin{equation}\label{compactness-2}
\big|\int_{D(r,\psi_0)\setminus \Omega} 
A^0\nabla u_0\cdot \nabla \varphi\, dx\big|<\delta/3
\end{equation}
and
\begin{equation}\label{compactness-3}
\big|\int_{D(r,\psi_k)\setminus \Omega} 
{A}_k (x/\varep_k)\nabla u_k\cdot \nabla \varphi\, dx\big|<\delta/3
\end{equation}
for $k$ sufficiently large. Thus (\ref{compactness-1}) would follow if we can show that
\begin{equation}\label{compactness-4}
\int_\Omega
A_k (x/\varep_k) \nabla u_k \cdot \nabla \varphi\, dx
\to  
\int_\Omega
A^0  \nabla u_0 \cdot \nabla \varphi\, dx.
\end{equation}
This, however, is a direct consequence of Lemma \ref{lemma-2.1},
since
$\text{\rm div}(A_k(x/\varep_k)\nabla u_k)=0$ in $\Omega$
by (\ref{Neumann-problem-k}).
\end{proof}

We end this section with the uniform interior gradient estimate, established 
in \cite{AL-1987} by
Avellaneda and Lin, 
for solutions of $\mathcal{L}_\varep (u_\varep)=0$.
For a ball $B=B(x,r)$ in $\brd$,
we let $\rho B=B(x,\rho r)$.
We will use $\average_E f$ to denote $\frac{1}{|E|}\int_E f$, the average of $f$ over $E$.

\begin{thm}\label{interior-estimate-theorem}
Let $A \in \Lambda(\mu,\lambda,\tau)$.
Suppose that $\mathcal{L}_\varep (u_\varep)=0$ in $2B$.
Then
\begin{equation}\label{interior-estimate}
\sup_{B} |\nabla u_\varep|
\le C
\left(\average_{2B}
|\nabla u_\varep|^2\, dx \right)^{1/2},
\end{equation}
where $C$ depends only on $d$, $m$, $\mu$, $\lambda$, $\tau$.
\end{thm}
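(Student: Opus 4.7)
The plan is to follow the compactness method of Avellaneda--Lin. By translation and dilation it suffices to prove $|\nabla u_\varep(0)| \le C \big(\average_{B_1} |\nabla u_\varep|^2\big)^{1/2}$ when $\mathcal{L}_\varep u_\varep = 0$ in $B_1$, and by the Caccioppoli inequality it is enough to bound $|\nabla u_\varep(0)|$ in terms of $\big(\average_{B_1} |u_\varep|^2\big)^{1/2}$. The core of the argument is a one-step approximation lemma: there exist $\theta \in (0, 1/4)$, $\varep_0 > 0$, $\sigma \in (0, \lambda)$, and $C_0 > 0$ (depending only on $d, m, \mu, \lambda, \tau$) such that if $0 < \varep < \varep_0$, $\mathcal{L}_\varep w = 0$ in $B_1$, and $\average_{B_1}|w|^2 \le 1$, then there is an affine function $\ell(x) = P \cdot x + c$ with $|P| + |c| \le C_0$ and $\big(\average_{B_\theta} |w - \ell|^2\big)^{1/2} \le \theta^{1+\sigma}$.

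I would prove this lemma by contradiction using Lemma \ref{lemma-2.1}. Suppose it fails, so there are sequences $\varep_k \to 0$, $A_k \in \Lambda(\mu, \lambda, \tau)$, and $w_k$ satisfying the hypotheses for which no such $\ell$ exists. Caccioppoli provides a uniform $W^{1,2}(B_{3/4})$ bound; passing to a subsequence, $w_k \to w_0$ strongly in $L^2(B_{3/4})$ and weakly in $W^{1,2}$, and $\hat{A}_k \to A^0$ with $A^0$ satisfying the ellipticity condition. Lemma \ref{lemma-2.1} then yields $-\text{\rm div}(A^0 \nabla w_0) = 0$ in $B_{3/4}$. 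By classical interior regularity for constant-coefficient elliptic systems, $w_0$ is smooth near the origin, so Taylor's theorem gives an affine $\ell_0$ with $\big(\average_{B_\theta}|w_0 - \ell_0|^2\big)^{1/2} \le C^\ast \theta^2$ for some absolute $C^\ast$. Fix any $\sigma \in (0, \lambda)$ and take $\theta$ so small that $C^\ast \theta^2 \le \tfrac{1}{2}\theta^{1+\sigma}$; then $\ell_0$ works for $w_k$ with $k$ sufficiently large, a contradiction.

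With the one-step lemma in hand, I iterate. The goal is to produce affine $\ell_k(x) = P_k \cdot x + c_k$ with
$$\left(\average_{B_{\theta^k}} |u_\varep - \ell_k|^2\right)^{1/2} \le \theta^{k(1+\sigma)}, \qquad |P_{k+1} - P_k| + \theta^{-k}|c_{k+1} - c_k| \le C_0 \theta^{k\sigma},$$
valid as long as $\theta^k \ge \varep/\varep_0$. To advance from $k$ to $k+1$, rescale $v(y) = \theta^{-k(1+\sigma)}[u_\varep(\theta^k y) - \ell_k(\theta^k y)]$ to $B_1$. Then $v$ satisfies an equation of the form $\mathcal{L}_{\varep/\theta^k} v = \text{\rm div}(F_k)$ on $B_1$ with $\varep/\theta^k < \varep_0$, with $F_k$ bounded in terms of $|P_k|$ (the source arises because affine functions are not in the kernel of $\mathcal{L}_\varep$), and $\average_{B_1}|v|^2 \le 1$. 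The one-step lemma applies, with a minor extension to accommodate a small divergence-form source (Lemma \ref{lemma-2.1} still identifies the limit equation), and produces the next affine piece. Summing the geometric series gives $|P_k| \le C$ and Cauchy-ness of $\{P_k\}$, and unwinding defines $\nabla u_\varep(0) = \lim_k P_k$.

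The iteration terminates at the first scale $r_\ast = \theta^{k_\ast}$ with $r_\ast < \varep/\varep_0$, i.e.\ $r_\ast \sim \varep$. At this scale the rescaled equation has coefficients that are uniformly $C^\lambda$ on the unit ball (no small parameter remains), so classical Schauder estimates for smooth-coefficient systems supply the final gradient bound. Combining everything yields $|\nabla u_\varep(0)| \le C\big(\average_{B_1} |u_\varep|^2\big)^{1/2}$, whence the theorem via Caccioppoli. I expect the main technical obstacle to be the handling of the divergence-form source $F_k$ in the rescaled equations, since $\mathcal{L}_\varep$ does not annihilate general linear functions. A clean way around this is to work throughout with the $\varep$-corrected affine $P \cdot x + \varep \chi(x/\varep) P$, which is $\mathcal{L}_\varep$-harmonic by construction of $\chi$; its deviation from $P \cdot x$ in $L^2(B_r)$ is $O(\varep)$, which is $\ll r^{1+\sigma}$ at every scale $r \ge \varep/\varep_0$ appearing in the iteration, so the correction is absorbed without disturbing the geometry.
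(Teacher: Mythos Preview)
The paper does not give a proof of this theorem; it simply cites the result of Avellaneda and Lin \cite{AL-1987} and uses it as a black box throughout. Your outline is essentially the original Avellaneda--Lin compactness argument, and the main ideas are correct: the one-step improvement lemma via Lemma~\ref{lemma-2.1}, the iteration down to scale $\varep$, and the blow-up plus classical Schauder to close at the terminal scale.

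One comment on the handling of the source term. Your remark that the deviation $\varep\chi(x/\varep)P$ is ``$O(\varep)\ll r^{1+\sigma}$ at every scale $r\ge \varep/\varep_0$'' is not literally true at the original scale (take $r=\varep/\varep_0$: then $r^{1+\sigma}\sim \varep^{1+\sigma}\ll \varep$). The correct way to read it is at the \emph{rescaled} unit scale: after blowing up $B_{\theta^k}$ to $B_1$, the effective parameter is $\varep/\theta^k<\varep_0$, and the correction term is $O(\varep_0)$ in $L^\infty(B_1)$; since $\theta$ is fixed first and $\varep_0$ afterward, one can arrange $C\varep_0\le \tfrac12\theta^{1+\sigma}$. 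Cleaner still---and this is what Avellaneda--Lin actually do---is to state the one-step lemma directly with corrected affines $P\cdot x+\varep\chi(x/\varep)P+c$ in place of plain affines. These are genuine $\mathcal{L}_\varep$-solutions, so the rescaled function in the iteration is homogeneous (no $\text{div}(F_k)$ source at all), and the compactness argument for the lemma is unchanged because $\varep_k\chi(x/\varep_k)\to 0$ uniformly. With that adjustment your proof goes through.
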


\section{Boundary H\"older estimates}

The goal of this section is to establish uniform
boundary H\"older estimates
for $\mathcal{L}_\varep$ under Neumann boundary condition.
Throughout this section we assume that 
$A\in \Lambda (\mu, \lambda,\tau)$.

\begin{thm}\label{boundary-holder-theorem}
Let $\Omega$ be a bounded $C^{1,\alpha_0}$ domain.
Let $p>0$ and $\gamma\in (0,1)$.
Suppose that $\mathcal{L}_\varep (u_\varep)=0$ in $B(Q, r)\cap\Omega$
and $\frac{\partial u_\varep}{\partial\nu_\varep} =g$
on $B(Q,r)\cap\partial\Omega$
for some $Q\in \partial\Omega$ and $0<r<r_0$. 
Then
\begin{equation}\label{local-size-estimate}
\sup_{B(Q, r/2)\cap\Omega} |u_\varep|
\le C\left\{
\left(
\average_{B(Q,r)\cap\Omega} |u_\varep|^p \, dx \right)^{1/p}
+\rho \| g\|_{L^\infty (B(Q,r)\cap\partial\Omega)}\right\},
\end{equation}
and for $x,y\in B(Q, r/2)\cap \Omega$,
\begin{equation}\label{local-holder-estimate}
|u_\varep (x)-u_\varep (y)|
\le
C\left(\frac{|x-y|}{r}\right)^\gamma
\left\{ \left(\average_{B(Q,r)\cap\Omega} |u_\varep|^p\, dx \right)^{1/p}
+\rho \| g\|_{L^\infty (B(Q,r)\cap\partial\Omega)}\right\},
\end{equation}
where $r_0>0$ depends only on $\Omega$ and $C>0$ 
on $d$, $m$, $\mu$, $\lambda$, $\tau$, $p$, $\gamma$ and $\Omega$.
\end{thm}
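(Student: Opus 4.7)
The plan is to use the Avellaneda--Lin compactness method, adapted to Neumann boundary data and powered by Theorem \ref{compactness-theorem}. No boundary correctors are needed at the Hölder level, and in particular the symmetry condition $A^*=A$ is not invoked. After translation, rotation and dilation I may assume $Q=0$ and that $\partial\Omega\cap B(0,1)$ is the graph of a function $\psi$ satisfying (\ref{psi}), so the problem lives on the standard graph domain $D(1, \psi)$ and Theorem \ref{compactness-theorem} applies directly.

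The core step is a one-step improvement lemma: there exist $\theta\in (0, 1/4)$ and $\varep_0>0$, depending only on $d, m, \mu, \gamma, M_0$, such that whenever $0<\varep<\varep_0$, $A\in\Lambda(\mu,\lambda,\tau)$, $\psi$ satisfies (\ref{psi}), and $u_\varep$ solves (\ref{Neumann-problem-k}) on $D(1, \psi)$ with $\average_{D(1,\psi)}|u_\varep|^2\le 1$ and $\|g\|_{L^\infty(\Delta(1,\psi))}\le\theta^{1+\gamma}$, there exists $c\in\br^m$ with
\[
\Big( \average_{D(\theta,\psi)} |u_\varep-c|^2\, dx\Big)^{1/2}\le\theta^\gamma.
\]
I prove this by contradiction. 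A failing sequence $(\varep_k, A_k, \psi_k, u_k, g_k)$ verifies the hypotheses of Theorem \ref{compactness-theorem}; after the flattening $w_k(x', x_d) = u_k(x', x_d+\psi_k(x'))$ and passing to a subsequence, $u_k$ converges as in (\ref{compactness-conclusion}) to $u_0\in W^{1,2}(D(1,\psi_0))$ weakly solving $\text{\rm div}(A^0\nabla u_0) = 0$ in $D(1,\psi_0)$ with $\partial u_0/\partial\nu_0 = 0$ on $\Delta(1,\psi_0)$, where $A^0$ is a constant matrix satisfying (\ref{ellipticity}) and $\psi_0$ satisfies (\ref{psi}). Classical boundary $C^{1,\alpha_0}$ regularity for constant-coefficient elliptic systems on $C^{1,\alpha_0}$ domains (see, e.g., \cite{Agmon-1959, Agmon-1964}) gives $\|\nabla u_0\|_{L^\infty(D(1/2,\psi_0))}\le C$, so with $c_0 = u_0(0)$ one has $(\average_{D(\theta,\psi_0)}|u_0-c_0|^2)^{1/2} \le C\theta$. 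Choosing $\theta$ so that $C\theta^{1-\gamma}\le 1/2$ contradicts the assumed failure for $k$ large, since $\psi_k\to\psi_0$ in $C^1$.

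The iteration is standard. Applying the lemma to the rescalings $\tilde u(x) = \theta^{-k\gamma}(u_\varep(\theta^k x)-c_k)$, which solve Neumann problems for $\mathcal{L}_{\varep/\theta^k}$ on $D(1,\psi^\sharp_k)$ with $\psi^\sharp_k(x')=\theta^{-k}\psi(\theta^k x')$ still satisfying (\ref{psi}) and with boundary data of $L^\infty$-norm at most $\theta^{1+\gamma}$, produces constants $c_k$ with $|c_k-c_{k-1}|\le C\theta^{(k-1)\gamma}$ and $(\average_{D(\theta^k,\psi)}|u_\varep-c_k|^2)^{1/2}\le\theta^{k\gamma}$, valid as long as $\theta^k\ge\varep/\varep_0$. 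For scales below $\varep$, the rescaled equation reduces to the fixed operator $\mathcal{L}_1$ on a domain of size at most $1$, and standard boundary Schauder theory for a single $A\in\Lambda(\mu,\lambda,\tau)$ (via (\ref{smoothness})) closes the decay. Summing the geometric series $\sum|c_k-c_{k-1}|$ yields (\ref{local-holder-estimate}) with $p=2$, and evaluation at radius $r/2$ yields (\ref{local-size-estimate}) with $p=2$. The extension to $p\in(0,2)$ is a standard self-improvement: the interpolation $(\average|u_\varep|^2)^{1/2}\le\|u_\varep\|_{L^\infty}^{(2-p)/2}(\average|u_\varep|^p)^{1/2}$, combined with Young's inequality and an iteration on shrinking balls, absorbs the $L^\infty$ norm into the $L^p$ average; the case $p>2$ is immediate from Hölder's inequality. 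The main obstacle throughout is the one-step lemma, specifically the identification of the limit Neumann problem; this is precisely the content of Theorem \ref{compactness-theorem}, after which classical boundary regularity finishes the argument.
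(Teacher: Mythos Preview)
Your proposal is correct and follows essentially the same compactness--iteration scheme as the paper (Lemmas \ref{Step-3.1-lemma}--\ref{Step-3.2-lemma} and the proof of Theorem \ref{boundary-holder-theorem-local}), with only cosmetic differences in the normalization of $g$ and the choice of subtracted constant. One small slip: in the limit of the contradiction argument the Neumann data $g_0$ need not vanish, only satisfy $\|g_0\|_{L^\infty}\le\theta^{1+\gamma}$ (weak limits preserve the bound, not force it to zero), but the constant-coefficient boundary estimate handles this equally well and the rest of the argument is unaffected.
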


Let $D(\rho,\psi)$ and $\Delta (\rho,\psi)$ be defined by (\ref{definition-of-D}).
By a change of the coordinate system it will suffice
 to establish the following.

\begin{thm}\label{boundary-holder-theorem-local}
Let $\gamma\in (0,1)$.
Suppose that $\mathcal{L}_\varep (u_\varep)=0$ in $D(\rho)$ and
$\frac{\partial u_\varep}{\partial \nu_\varepsilon} =g$ on $\Delta(\rho)$
for some $\rho>0$.
Then for any $x,y\in D(\rho/2)$, 
\begin{equation}
\label{boundary-holder-estimate}
|u_\varep (x)-u_\varep (y)|
\le C \left(\frac{|x-y|}{\rho}\right)^\gamma 
\left\{
\left(\average_{D(\rho)}
|u_\varep |^2\right)^{1/2}
+\rho \| g\|_{L^\infty (\Delta (\rho))}\right\},
\end{equation}
where $D(\rho)=D(\rho, \psi)$, $\Delta(\rho)=D(\rho, \psi)$, and 
$C >0$ depends only on $d$, $m$, $\mu$, $\lambda$, 
$\tau$, $\gamma$ and $(\alpha_0, M_0)$ in (\ref{psi}).
\end{thm}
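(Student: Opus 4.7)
The plan is to follow the Avellaneda--Lin compactness scheme, using Theorem~\ref{compactness-theorem} as the main new ingredient to handle the Neumann boundary condition. By rescaling $v(x) = u_\varep(\rho x)/M$, where $M$ denotes the right-hand side of~(\ref{boundary-holder-estimate}), and observing that $\psi_\rho(x') := \rho^{-1}\psi(\rho x')$ still belongs to the class~(\ref{psi}) with the same $(\alpha_0, M_0)$, it suffices to prove the normalized statement with $\rho = 1$ and both terms on the right-hand side bounded by~$1$.

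First I would establish a one-step improvement. Fix $\gamma' \in (\gamma, 1)$. I claim there exist $\theta \in (0, 1/4)$ and $\varep_0 > 0$, depending only on $d,m,\mu,\lambda,\tau,\gamma,\alpha_0,M_0$, such that whenever $\varep \le \varep_0$, $\mathcal{L}_\varep(u_\varep) = 0$ in $D(1,\psi)$, $\partial u_\varep/\partial\nu_\varep = g$ on $\Delta(1,\psi)$, with $\bigl(\average_{D(1)}|u_\varep|^2\bigr)^{1/2} + \|g\|_{L^\infty(\Delta(1))} \le 1$, there is a constant $c$ with $|c|\le C_0$ and
\[
\left(\average_{D(\theta,\psi)} |u_\varep - c|^2\, dx\right)^{1/2} \le \theta^{\gamma}.
\]
The proof is by contradiction and compactness. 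If the statement fails, one obtains sequences $\varep_k \to 0$, $A_k \in \Lambda(\mu,\lambda,\tau)$, $\psi_k$ satisfying~(\ref{psi}), $g_k$ with $\|g_k\|_{L^\infty} \le 1$, and $u_k$ with $\|u_k\|_{L^2(D(1,\psi_k))} \le 1$ violating the improvement. A Caccioppoli-type estimate (obtained by testing~(\ref{Neumann-problem-k}) against $u_k \varphi^2$ for a cut-off $\varphi$) bounds $\nabla u_k$ in $L^2$ on a slightly smaller domain, so Theorem~\ref{compactness-theorem} delivers a subsequence with $u_k \to u_0$ strongly in $L^2$, where $u_0$ solves a constant-coefficient Neumann problem in $D(3/4,\psi_0)$ with data $g_0 \in L^\infty$. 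Standard $W^{1,p}$ estimates for constant-coefficient elliptic systems in $C^{1,\alpha_0}$ domains, combined with Sobolev embedding, place $u_0$ in $C^{0,\gamma'}$ near~$0$, so that
\[
\left(\average_{D(\theta,\psi_0)}|u_0 - u_0(0)|^2\, dx\right)^{1/2} \le C_1 \theta^{\gamma'}.
\]
Choosing $\theta$ so small that $C_1\theta^{\gamma'} < \tfrac{1}{2}\theta^\gamma$ and taking $c = u_0(0)$ contradicts the negation.

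Next I would iterate. Rescaling $w(x) = \theta^{-\gamma}[u_\varep(\theta x) - c]$ produces a Neumann problem of the same form with $\varep$ replaced by $\varep/\theta$ and profile $\psi^{(1)}(x') = \theta^{-1}\psi(\theta x')$, whose $C^{1,\alpha_0}$ seminorm has only shrunk. Applying the one-step lemma $k$ times, as long as $\varep/\theta^k \le \varep_0$, yields constants $c_k$ with $|c_k - c_{k-1}| \le C\theta^{(k-1)\gamma}$ and
\[
\left(\average_{D(\theta^k)}|u_\varep - c_k|^2\, dx\right)^{1/2} \le \theta^{k\gamma}.
\]
For scales $r \le \varep/\varep_0$, the blown-up function $u_\varep(rx)$ satisfies a Neumann problem for an operator whose coefficients $A(rx/\varep)$ are $C^{0,\lambda}$ with bounded norm (since $r/\varep \le 1/\varep_0$), so the classical boundary Hölder estimate for non-oscillating elliptic systems gives the same decay at small scales. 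Patching the two regimes and invoking the Campanato characterization of Hölder spaces delivers~(\ref{boundary-holder-estimate}) at the reference point~$0$; translation of the origin along~$\Delta(1)$ upgrades this to a pointwise Hölder estimate on all of $D(1/2)$.

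The main obstacle is the passage to the limit in the Neumann condition under the varying graphs $\psi_k$, which is precisely what Theorem~\ref{compactness-theorem} is designed for. Once that tool is in hand, the remainder is the Avellaneda--Lin iteration, with care taken to verify that the rescaled profiles $\psi^{(k)}(x')=\theta^{-k}\psi(\theta^k x')$ remain uniformly in the class~(\ref{psi}) so that the compactness constants $\theta,\varep_0$ can be reused at every step.
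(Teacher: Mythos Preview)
Your proposal is correct and follows essentially the same route as the paper: a one-step improvement lemma proved by compactness via Theorem~\ref{compactness-theorem}, an induction/iteration over scales $\theta^k$ using the rescaled profiles $\psi^{(k)}(x')=\theta^{-k}\psi(\theta^k x')$, a blow-up at scales $r\lesssim\varepsilon$ to invoke classical regularity, and finally Campanato's characterization together with the interior estimate~(\ref{interior-estimate}). The only cosmetic differences are that the paper subtracts the mean $(\overline{u_\varepsilon})_{D(\theta)}$ rather than a constant coming from the limit $u_0(0)$, and in the iteration the paper tracks $\max\{\cdot,\cdot\}$ rather than the sum of the two normalized quantities (which avoids the harmless drift in your normalization when you rescale $g$).
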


The proof of Theorem \ref{boundary-holder-theorem-local}
uses the compactness method developed in \cite{AL-1987, AL-1989-II,
AL-1989-ho} for homogenization problems.
We begin with the well known Cacciopoli's inequality,
\begin{equation}\label{Cacciopoli}
\int_{D(s\rho)}
|\nabla u_\varep|^2\, dx
\le \frac{C}{(t-s)^2 \rho^2}
\int_{D(t\rho)} |u_\varep|^2\, dx
+C \rho \| g\|_{L^2(\Delta(\rho))}^2,
\end{equation}
where $0<s<t<1$, $\mathcal{L}_\varep(u_\varep)=0$ in $D(\rho)$ and
$\frac{\partial u_\varep}{\partial\nu_\varep}=g$ on $\Delta(\rho)$.
The periodicity of $A$ is not needed here.

For a function $u$ defined on $S$, we will use $(\overline{u})_S$ 
(and $\average_S$) to denote its 
average over $S$. 

\begin{lemma}\label{Step-3.1-lemma}
Fix $\beta\in (0,1)$. There exist $\varep_0>0$ and $\theta\in (0,1)$, depending
only on $d$, $m$, $\mu$, $\lambda$, $\tau$, $\beta$ and $(\alpha_0, M_0)$, such that
\begin{equation}
\label{estimate-3.1}
\average_{D(\theta)}
|u_\varep -(\overline{u_\varep})_{D(\theta)}|^2
\le \theta^{2\beta},
\end{equation}
whenever $\varep<\varep_0$, $\mathcal{L}_\varep( u_\varep)=0$ in $D(1)$, 
$\frac{\partial u_\varep}{\partial \nu_\varep} =g$ on $\Delta (1)$,
$$
\| g\|_{L^\infty (\Delta (1))}\le 1 \quad \text{ and }\quad
\average_{D(1)}
|u_\varep -(\overline{u_\varep})_{D(1)} |^2
\le 1.
$$
\end{lemma}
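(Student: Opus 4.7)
The plan is a standard compactness argument in the spirit of Avellaneda--Lin. The strategy is to fix $\theta$ based on the regularity of solutions of the limiting constant-coefficient problem, then argue by contradiction: if no corresponding $\varep_0$ worked for that $\theta$, the compactness theorem would produce a limit that violates the chosen decay. Concretely, I would suppose for a $\theta\in(0,1/4)$ to be chosen below that no such $\varep_0$ exists, so that there are sequences $\varep_k\to 0$, matrices $A_k\in\Lambda(\mu,\lambda,\tau)$, functions $\psi_k$ satisfying (\ref{psi}), weak solutions $u_k$, and data $g_k$ with $-\text{\rm div}(A_k(\cdot/\varep_k)\nabla u_k)=0$ in $D(1,\psi_k)$, conormal derivative $g_k$ on $\Delta(1,\psi_k)$, $\|g_k\|_{L^\infty}\le 1$, and $\average_{D(1,\psi_k)}|u_k-(\overline{u_k})_{D(1,\psi_k)}|^2\le 1$, yet $\average_{D(\theta,\psi_k)}|u_k-(\overline{u_k})_{D(\theta,\psi_k)}|^2>\theta^{2\beta}$. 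Subtracting the constant $(\overline{u_k})_{D(1,\psi_k)}$ preserves the equation and the conormal data, so I may assume $(\overline{u_k})_{D(1,\psi_k)}=0$ and $\|u_k\|_{L^2(D(1,\psi_k))}\le C$. Then Cacciopoli's inequality (\ref{Cacciopoli}) gives $\|u_k\|_{W^{1,2}(D(1/2,\psi_k))}\le C$ uniformly in $k$.

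Applying Theorem \ref{compactness-theorem} on $D(1/2,\psi_k)$ and extracting subsequences, I obtain $\psi_0$ satisfying (\ref{psi}), a constant matrix $A^0$ satisfying (\ref{ellipticity}), $g_0\in L^2(\Delta(1/2,\psi_0))$, and $u_0\in W^{1,2}(D(1/2,\psi_0))$ with $\psi_k\to\psi_0$ in $C^1$, $u_k\to u_0$ strongly in $L^2$ after flattening, and $u_0$ a weak solution of $-\text{\rm div}(A^0\nabla u_0)=0$ in $D(1/2,\psi_0)$ with conormal data $g_0$ on $\Delta(1/2,\psi_0)$. Since $\|g_k\|_{L^\infty}\le 1$ and weak $L^2$-limits preserve $L^\infty$-bounds, $\|g_0\|_{L^\infty}\le 1$. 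Because $A^0$ is constant and elliptic, $\psi_0\in C^{1,\alpha_0}$, and $g_0\in L^\infty$, classical Schauder--Agmon--Douglis--Nirenberg boundary regularity for the Neumann problem on $C^{1,\alpha_0}$ domains yields
\[
\|\nabla u_0\|_{L^\infty(D(1/4,\psi_0))}\le C_0,
\]
with $C_0$ depending only on $d,m,\mu,\alpha_0,M_0$. Poincar\'e's inequality then gives $\average_{D(\theta,\psi_0)}|u_0-(\overline{u_0})_{D(\theta,\psi_0)}|^2\le C_1 C_0^2\theta^2$ for all $\theta\in(0,1/4)$.

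Now I fix $\theta\in(0,1/4)$ so small that $C_1 C_0^2\theta^{2-2\beta}\le 1/2$, which is possible since $\beta<1$. The strong $L^2$-convergence together with $\psi_k\to\psi_0$ in $C^1$ (so that the domains $D(\theta,\psi_k)$ and their volumes converge) implies that the oscillation averages pass to the limit:
\[
\average_{D(\theta,\psi_k)}|u_k-(\overline{u_k})_{D(\theta,\psi_k)}|^2\ \longrightarrow\ \average_{D(\theta,\psi_0)}|u_0-(\overline{u_0})_{D(\theta,\psi_0)}|^2\le \tfrac{1}{2}\theta^{2\beta},
\]
which contradicts the hypothesis that each term on the left exceeds $\theta^{2\beta}$. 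This yields the lemma.

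The main obstacle is securing the uniform Schauder-type bound $\|\nabla u_0\|_{L^\infty(D(1/4,\psi_0))}\le C_0$ with a constant depending only on the structural parameters $d,m,\mu,\alpha_0,M_0$ and not on the particular $A^0$ or $\psi_0$ arising as limits. This requires boundary regularity for constant-coefficient elliptic systems with $L^\infty$ Neumann data on $C^{1,\alpha_0}$ domains, and the uniformity in the ellipticity constant and in the $C^{1,\alpha_0}$-norm of the boundary, while implicit in the Agmon--Douglis--Nirenberg framework, deserves care. A small additional subtlety is that the limiting Neumann datum $g_0$ is produced by Theorem \ref{compactness-theorem} only in $L^2$, so the uniform $L^\infty$-bound on $g_k$ must be used to upgrade $g_0$ to $L^\infty$ in order to feed it into the Schauder estimate.
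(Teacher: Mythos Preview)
Your compactness strategy is exactly the one in the paper, and the logical skeleton of your argument is correct. There is, however, one genuine gap: with only $g_0\in L^\infty$ you cannot in general conclude $\nabla u_0\in L^\infty$ for the limiting constant-coefficient Neumann problem. Already for the Laplacian in a half-space, $L^\infty$ Neumann data produce a gradient that is at best in BMO (the tangential derivatives are singular integrals of $g_0$), and Schauder/ADN theory requires H\"older data to yield a Lipschitz bound. So the step ``$\|\nabla u_0\|_{L^\infty(D(1/4,\psi_0))}\le C_0$'' is not available under your hypotheses.

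The fix is easy and is precisely what the paper does: replace the Lipschitz estimate by a boundary H\"older estimate for constant-coefficient systems with $L^\infty$ Neumann data. With $\beta'=(1+\beta)/2$ one has, uniformly in $A^0$ satisfying (\ref{ellipticity}) and in $\psi_0$ satisfying (\ref{psi}),
\[
\average_{D(r,\psi_0)}\big|u_0-(\overline{u_0})_{D(r,\psi_0)}\big|^2
\le C_0\, r^{2\beta'}\qquad(0<r<1/4),
\]
whenever $\|g_0\|_{L^\infty}\le 1$ and $\int_{D(1/2,\psi_0)}|u_0|^2\le |D(1)|$. Then choose $\theta$ with $2C_0\theta^{2\beta'}\le\theta^{2\beta}$, and your contradiction goes through unchanged. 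Everything else in your write-up---the normalization by subtracting the mean, Cacciopoli to get $W^{1,2}$ bounds, the appeal to Theorem~\ref{compactness-theorem}, the upgrade of $g_0$ to $L^\infty$ via weak limits, and the passage to the limit in the oscillation averages---matches the paper and is fine.
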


\begin{proof}
Let $\mathcal{L}_0=-\text{\rm div} (A^0\nabla)$, where
$A^0$ is a constant matrix satisfying (\ref{ellipticity}).
Let $\beta^\prime =(1+\beta)/2$.
By boundary H\"older estimates  for solutions of elliptic systems with 
constant coefficients,
\begin{equation}
\label{constant-3.1}
\average_{D(r)}
|w - (\overline{w})_{D(r)}|^2
\le C_0 r^{2\beta^\prime}
\quad \quad \text{ for } 0<r<\frac{1}{4},
\end{equation}
whenever $\mathcal{L}_0 (w)=0$ in $D(1/2)$,
$\frac{\partial w}{\partial \nu_0}=g$ on $\Delta (1/2)$,
\begin{equation}\label{3.1.0}
 \| g\|_{L^\infty(\Delta(1/2))}\le 1 \qquad 
\text{ and } \qquad
\int_{D(1/2)} |w|^2 \le  |D(1)|,
\end{equation}
where  $C_0$ depends only on $d$, $m$, $\beta$,
$\mu$ and $(\alpha_0, M_0)$. 

Next we choose $\theta\in (0, 1/4)$
so small that $2C_0 \theta^{2\beta^\prime}\le \theta^{2\beta}$.
We shall show by contradiction that for this $\theta$,
there exists $\varep_0>0$, depending only
on $d$, $m$, $\mu$, $\lambda$, $\tau$, $\beta$ and $(\alpha_0, M_0)$, such that 
(\ref{estimate-3.1}) holds if $0<\varep<\varep_0$ and
 $u_\varep$ satisfies the conditions in 
Lemma \ref{Step-3.1-lemma}.

To this end let's suppose that there exist sequences $\{ \varep_k\}$, 
$\{ A_k \}$,
$\{ u_{\varep_k}\}$, $\{ g_k\}$ and $\{ \psi_k\}$ such that
$\varep_k \to 0$, $A_k \in \Lambda(\mu, \lambda, \tau)$,
$\psi_k$ satisfies (\ref{psi}),
\begin{equation}\label{3.1.1}
\left\{
\aligned
\mathcal{L}^k_{\varep_k} (u_{\varep_k}) & =0 & & \text{ in } D_k (1),\\
\frac{\partial u_{\varep_k}}{\partial\nu_{\varep_k}} & =g_k
& & \text{ on }\Delta_k(1),
\endaligned
\right.
\end{equation}
\begin{equation}\label{3.1.2}
\| g_k \|_{L^\infty(\Delta_k (1))} \le 1, \qquad
\average_{D_k(1)}
|u_{\varep_k} -(\overline{u_{\varep_k}})_{D_k(1)}|^2\le 1
\end{equation}
and
\begin{equation}\label{3.1.3}
\average_{D_k (\theta)}
|u_{\varep_k} -(\overline{u_{\varep_k}})_{D_k(\theta)}|^2
> \theta^{2\beta},
\end{equation}
where $\mathcal{L}_{\varep_k}^k
=-\text{\rm div} \big(A_k(x/\varep_k)\nabla\big)$,
$D_k(r) =D(r, \psi_k)$ and $\Delta_k (r)=D(r, \psi_k)$.
By subtracting a constant we may assume that $(\overline{u_{\varep_k}})_{D_k(1)} =0$.
Thus it follows from (\ref{3.1.2}) and the Cacciopoli's inequality 
(\ref{Cacciopoli}) that the norm of
$ u_{\varep_k}$ in $W^{1,2}(D_k(1/2)) $ is uniformly bounded.
In view of Theorem \ref{compactness-theorem}, by passing to subsequences, we may assume that
\begin{equation}\label{3.1.4}
\left\{
\aligned
&\psi_k \to \psi_0  \quad \text{ in }C^1(|x^\prime|<1),\\
&g_k(x^\prime, \psi_k (x^\prime))\to g_0(x^\prime, \psi_0(x^\prime)) \quad\text{ weakly
in } L^2(|x^\prime|<1),\\
& u_{\varep_k}(x^\prime, x_d-\psi_k (x^\prime))\to u_0 (x^\prime, x_d-\psi_0(x^\prime))
\quad \text{ strongly in } L^2(D(1/2,0)),
\endaligned
\right.
\end{equation}
and
\begin{equation}\label{3.1.5}
\left\{
\aligned
& \text{div}(A^0\nabla u_0)=0 & \quad & \text{ in } D(1/2, \psi_0),\\
& \frac{\partial u_0}{\partial \nu_0} =g_0 & \quad&  \text{ on } \Delta(1/2, \psi_0),
\endaligned
\right.
\end{equation}
where $A^0$ is a constant matrix satisfying (\ref{ellipticity}).

Using (\ref{3.1.4}) one may verify that 
$$
|D_k(r)|\to |D_0(r)|,\ \
\| g_0\|_{L^\infty(\Delta(1, \psi_0))}\le 1,\ \
(\overline{u_{\varep_k}})_{D_k(r)}\to (\overline{u_0})_{D_0(r)}
$$ 
and
\begin{equation}\label{3.1.6}
\int_{D_k(r)}
|u_{\varep_k}
-(\overline{u_{\varep_k}})_{D_k(r)}|^2
\to 
\int_{D_0(r)}
|u_0
-(\overline{u_0})_{D_0(r)}|^2
\end{equation}
for any $r\in (0,1]$, where $D_0(r)=D(r, \psi_0)$.
It follows that
\begin{equation}\label{3.1.6-1}
\aligned
\average_{D_0(1)} |u_0|^2  & \le 1,\\
\average_{D_0 (\theta)}
|u_{0} -(\overline{u_{0}})_{D_0(\theta)}|^2
& \ge \theta^{2\beta}.
\endaligned
\end{equation}
In view of (\ref{constant-3.1})-(\ref{3.1.0}) and (\ref{3.1.6-1}) we obtain
$\theta^{2\beta}\le C_0 \theta^{2\beta^\prime}$.
This contradicts $2C_0\theta^{2\beta^\prime}
\le \theta^{2\beta}$.
\end{proof}

\begin{lemma}\label{Step-3.2-lemma}
Fix $\beta\in (0,1)$. Let $\varep_0$, $\theta$ be the constants 
given by Lemma \ref{Step-3.1-lemma}.
Suppose that $\mathcal{L}_\varep (u_\varep) =0$
in $D(1, \psi)$ and $\frac{\partial u_\varep}{\partial\nu_\varep}=g$ on $\Delta(1,\psi)$.
Then, if $\varep< \theta^{k-1}\varep_0$ for some $k\ge 1$,
\begin{equation}\label{3.2.1}
\average_{D(\theta^k, \psi)}
|u_\varep -(\overline{u_\varep})_{D(\theta^k, \psi)}|^2
\le \theta^{2k\beta} J^2,
\end{equation}
where
$$
J=\max \left\{ 
\left(\average_{D(1, \psi)}
|u_\varep -(\overline{u_\varep})_{D(1, \psi)}|^2\right)^{1/2},\
\| g\|_{L^\infty(\Delta(1,\psi))}\right\}.
$$
\end{lemma}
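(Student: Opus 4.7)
My plan is to prove the lemma by induction on $k$, using Lemma \ref{Step-3.1-lemma} as the one-step improvement together with a rescaling that preserves the hypotheses. After normalizing by $J$, the case $k=1$ is literally Lemma \ref{Step-3.1-lemma} applied to $u_\varep/J$, whose Neumann data has sup-norm $\le 1$ and whose $L^2$ oscillation on $D(1,\psi)$ is $\le 1$.

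For the induction step, assume the conclusion holds at level $k$ and that $\varep<\theta^k\varep_0$ (so in particular $\varep<\theta^{k-1}\varep_0$, giving access to the inductive hypothesis). Define
\[
v(x)=\theta^{-k\beta} J^{-1}\bigl[\, u_\varep(\theta^k x)-(\overline{u_\varep})_{D(\theta^k,\psi)}\,\bigr],\qquad \psi_k(x')=\theta^{-k}\psi(\theta^k x').
\]
A direct calculation shows $\psi_k(0)=|\nabla\psi_k(0)|=0$ and $\|\nabla\psi_k\|_{C^{\alpha_0}}\le \theta^{k\alpha_0}M_0\le M_0$, so $\psi_k$ satisfies (\ref{psi}) with the same constants. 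By the chain rule, $v$ satisfies $\mathcal{L}_{\varep/\theta^k}(v)=0$ in $D(1,\psi_k)$, since $A(\theta^k x/\varep)=A(x/(\varep/\theta^k))$, and the conormal derivative transforms as
\[
\frac{\partial v}{\partial\nu_{\varep/\theta^k}}(x)=\theta^{k(1-\beta)} J^{-1}\,g(\theta^k x).
\]

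Now I verify the two hypotheses of Lemma \ref{Step-3.1-lemma} for $v$. First, the new small parameter is $\varep/\theta^k<\varep_0$ by assumption. Second, the Neumann data satisfies $\|\theta^{k(1-\beta)}J^{-1}g(\theta^k\cdot)\|_{L^\infty(\Delta(1,\psi_k))}\le \theta^{k(1-\beta)}\le 1$, using $\|g\|_{L^\infty(\Delta(1,\psi))}\le J$. Third, by the change of variables $y=\theta^k x$ and the inductive hypothesis,
\[
\average_{D(1,\psi_k)}\bigl|v-(\overline{v})_{D(1,\psi_k)}\bigr|^2\,dx
=\theta^{-2k\beta}J^{-2}\average_{D(\theta^k,\psi)}\bigl|u_\varep-(\overline{u_\varep})_{D(\theta^k,\psi)}\bigr|^2\,dy\le 1.
\]
Applying Lemma \ref{Step-3.1-lemma} to $v$ therefore yields $\average_{D(\theta,\psi_k)}|v-(\overline{v})_{D(\theta,\psi_k)}|^2\le\theta^{2\beta}$. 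Reversing the change of variables, since $D(\theta,\psi_k)$ corresponds to $D(\theta^{k+1},\psi)$, and since the average minimizes the mean-square deviation from a constant,
\[
\average_{D(\theta^{k+1},\psi)}\bigl|u_\varep-(\overline{u_\varep})_{D(\theta^{k+1},\psi)}\bigr|^2
\le \theta^{2k\beta}J^2\cdot\theta^{2\beta}=\theta^{2(k+1)\beta}J^2,
\]
completing the induction.

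The only subtle points are bookkeeping: matching the condition $\varep<\theta^{k-1}\varep_0$ at stage $k$ with the requirement $\varep/\theta^{k-1}<\varep_0$ needed to invoke Lemma \ref{Step-3.1-lemma} after rescaling, and checking that the rescaled boundary function $\psi_k$ still satisfies (\ref{psi}) with constants \emph{independent} of $k$. Both work out because $\theta<1$ and the Hölder seminorm scales favorably. No further periodicity, ellipticity, or smoothness hypotheses enter beyond those already used in Lemma \ref{Step-3.1-lemma}, so this iteration is essentially automatic once the correct choice of rescaling is made.
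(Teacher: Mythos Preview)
Your proof is correct and follows essentially the same route as the paper's: induction on $k$, with the inductive step carried out by rescaling $x\mapsto\theta^k x$ so that Lemma~\ref{Step-3.1-lemma} applies at scale~$1$ with the graph function $\psi_k(x')=\theta^{-k}\psi(\theta^k x')$. The only cosmetic difference is that you normalize explicitly by $\theta^{-k\beta}J^{-1}$ before invoking Lemma~\ref{Step-3.1-lemma}, whereas the paper writes the conclusion of that lemma in scale-invariant form with a $\max$ on the right-hand side; the two are equivalent, and your verification that $\psi_k$ still satisfies (\ref{psi}) and that the rescaled Neumann data has sup-norm $\le\theta^{k(1-\beta)}\le 1$ is exactly what is needed.
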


\begin{proof}
The lemma is proved by induction on $k$.
Note that the case $k=1$ is given by Lemma \ref{Step-3.1-lemma}.
Assume now that the lemma holds for some $k\ge 1$.
Let $\varep < \theta^k \varep_0$. We apply Lemma \ref{Step-3.1-lemma}
to $w(x)=u(\theta^k x)$ in $D(1, \psi_k)$, where
$\psi_k(x)=\theta^{-k}\psi(\theta^k x)$.
Since $\mathcal{L}_{\varep/\theta^k} (w)=0$ in $D(1,\psi_k)$,
this gives
$$
\aligned
& 
\average_{D(\theta^{k+1}, \psi)}
|u_\varep- (\overline{u_\varep})_{D(\theta^{k+1}, \psi)}|^2\\
& \quad =
\average_{D(\theta, \psi_k)}
|w- (\overline{w})_{D(\theta, \psi_k)}|^2\\
&\quad
\le \theta^{2\beta}
\max \left\{\average_{D(1, \psi_k)}
|w-(\overline{w})_{D(1, \psi_k)}|^2,\
\theta^{2k} \| g\|^2_\infty\right\}\\
&\quad
= \theta^{2\beta}
\max \left\{\average_{D(\theta^k, \psi)}
|u_\varep-(\overline{u_\varep})_{D(\theta^k, \psi)}|^2,\
\theta^{2k} \| g\|^2_\infty\right\}\\
&\quad
\le \theta^{2(k+1)\beta} J^2,
\endaligned
$$
where $\| g\|_\infty
=\| g\|_{L^\infty(\Delta(1,\psi))}$ and the last step follows by the 
induction assumption.
Here we also have used the fact that 
$\|\nabla\psi_k\|_{C^{\alpha_0}(\mathbb{R}^{d-1})}
\le \|\nabla \psi\|_{C^{\alpha_0}(\mathbb{R}^{d-1})} \le M_0$.
\end{proof}

\noindent{\bf Proof of Theorem \ref{boundary-holder-theorem-local}}.
By rescaling we may assume that $\rho=1$. We may also assume that
$\varep< \varep_0$, since the case $\varep\ge\varep_0$ follows directly from the 
classical regularity theory. We may further assume that
$$
\| g\|_{L^\infty(\Delta(1))}\le 1
\quad\text{ and } \quad
\int_{D(1)} |u_\varep|^2\le 1.
$$
Under these assumptions we will show that
\begin{equation}\label{3.3.1}
\average_{D(r)}
|u_\varep-(\overline{u_\varep})_{D(r)} |^2 \le C r^{2\beta}
\end{equation}
for any $r\in (0,1/4)$.
The desired estimate (\ref{boundary-holder-estimate}) with $p=2$ follows from 
the interior estimate (\ref{interior-estimate}) and (\ref{3.3.1}),
using Campanato's characterization of H\"older spaces (see e.g. \cite{Giaquinta}).

To prove (\ref{3.3.1}) we first consider the case $r\ge (\varep/\varep_0)$.
Choose $k\ge 0$ so that $\theta^{k+1}\le r< \theta^{k}$.
Then $\varep \le \varep_0 r <\varep_0\theta^k$.
It follows from Lemma \ref{Step-3.2-lemma} that
$$
\aligned
& \average_{D(r)}
|u_\varep -(\overline{u_\varep})_{D(r)}|^2
\le  C\average_{D(\theta^k)}
|u_\varep -(\overline{u_\varep})_{D(\theta^k)}|^2
\\
&\quad\quad
\le C\theta^{2k\beta}
\le Cr^{2\beta}.
\endaligned
$$

Next suppose that $r<(\varep/\varep_0)$.
Let $w(x)=u_\varep (\varep x)$. Then
$\mathcal{L}_1 (w)=0$ in $D(\varep_0^{-1}, \psi_\varep)$, where
$\psi_\varep (x^\prime)=\varep^{-1} \psi(\varep x^\prime)$.
By the classical regularity we obtain
$$
\aligned
&\average_{D(r,\psi)} |u_\varep -(\overline{u_\varep})_{D(r,\psi)}|^2
=\average_{D(\frac{r}{\varep}, \psi_\varep)}
|w-(\overline{w})_{D(\frac{r}{\varep}, \psi_\varep)}|^2\\
&\le C\left(\frac{r}{\varep}\right)^{2\beta}
\max \left\{ 
\average_{D(\frac{1}{\varep_0}, \psi_\varep)}
|w-(\overline{w})_{D(\frac{1}{\varep_0}, \psi_\varep)}|^2,\
\varep^2 \| g\|_\infty\right\}\\
& = C\left(\frac{r}{\varep}\right)^{2\beta}
\max \left\{ 
\average_{D(\frac{\varep}{\varep_0}, \psi)}
|u_\varep-(\overline{u_\varep})_{D(\frac{\varep}{\varep_0}, \psi)}|^2,\
\varep^2 \| g\|_\infty\right\}\\
&\le  C\left(\frac{r}{\varep}\right)^{2\beta}
\left(\frac{\varep}{\varep_0}\right)^{2\beta}
 =C\varep_0^{-2\beta} r^{2\beta},
\endaligned
$$
where the last inequality follows from the previous case $r=(\varep/\varep_0)$.
This finishes the proof of (\ref{3.3.1}) and thus of Theorem \ref{boundary-holder-theorem-local}.
\qed

We are now in a position to give the proof of Theorem \ref{boundary-holder-theorem}.

\noindent{\bf Proof of Theorem \ref{boundary-holder-theorem}.}
By rescaling we may assume that $r=1$.
The case $p=2$ follows directly from Theorem \ref{boundary-holder-theorem-local}.
To handle the case $0<p<2$, we note that by a simple covering argument,
estimate (\ref{local-size-estimate}) for $p=2$ gives
\begin{equation}\label{3.3.2}
\sup_{B(Q,s)\cap \Omega} |u_\varep|
\le 
C\left\{ \frac{1}{(t-s)^d}
\left(\average_{B(Q,t)\cap\Omega} |u_\varep|^2\right)^{1/2}
+ \| g\|_{L^\infty(B(Q, 1)\cap\partial\Omega)}\right\},
\end{equation}
where $(1/4)<s<t<1$. By a convexity argument (see e.g. \cite[p.173]{Fefferman-Stein-1972}),
estimate (\ref{3.3.2})  
implies that for any $p>0$,
\begin{equation}\label{3.3.3}
\left(\average_{B(Q,1/2)\cap\Omega} |u_\varep|^2\right)^{1/2}
\le 
C_p\left\{
\left(\average_{B(Q,1)\cap\Omega} |u_\varep|^p\right)^{1/p}
+ \| g\|_{L^\infty(B(Q, 1)\cap\partial\Omega)}\right\}.
\end{equation}
The case $0<p<2$ now follows from estimate (\ref{3.3.3}) and the case $p=2$.
\qed

\section{Proof of Theorem \ref{W-1-p-theorem}}\label{section-4}

Under conditions (\ref{ellipticity}) and (\ref{smoothness}),
weak solutions to (\ref{W-1-p}) exist and are unique, up to
an additive constant, provided that the data
satisfy the necessary condition
$\int_\Omega F^\beta+<g^\beta, 1>=0$ for $1\le \beta\le m$.
In this section we will show that the weak solutions satisfy 
the uniform $W^{1,p}$ estimate in Theorem \ref{W-1-p-theorem}.

Our starting point is the following
theorem established by J. Geng in \cite{Geng}, using a real variable
method originating in \cite{Caffarelli-1998} and further developed
in \cite{Shen-2005-bounds, Shen-2006-ne, Shen-2007-boundary}.

\begin{thm}\label{Geng-theorem}
Let $p>2$ and $\Omega$ be a bounded Lipschitz domain. Let $\mathcal{L}=-\text{\rm div}(A (x)\nabla)$
be an elliptic operator with coefficients satisfying (\ref{ellipticity}).
Suppose that 
\begin{equation}\label{reverse-holder}
\left\{ \average_{B\cap\Omega}
|\nabla u|^p\right\}^{1/p}
\le C_0 
\left\{ \average_{2B\cap\Omega}
|\nabla u|^2\right\}^{1/2},
\end{equation}
whenever $u\in W^{1,2} (3B\cap\Omega)$, 
$\mathcal{L} (u)  =0$ in $3B\cap\Omega$,
and $\frac{\partial u}{\partial\nu}  =0$ on $3B\cap \partial\Omega$.
Here $B=B(Q, r)$ is a ball with the property
that $0<r<r_0$ and either $Q\in \partial\Omega$ or
$B(Q,3r)\subset \Omega$.
Then, for any $f\in L^p (\Omega)$, the unique (up to constants) $W^{1,2}$ solution to
\begin{equation}
\left\{
\aligned
 \mathcal{L} (u) & =\text{\rm div} (f)  & \quad & \text{ in } \Omega,\\
\frac{\partial u}{\partial\nu}  
&=-n\cdot f &\quad  & \text{ on } \partial\Omega,
\endaligned
\right.
\end{equation}
satisfies the estimate
\begin{equation}\label{4.1.2}
\| \nabla u\|_{L^p(\Omega)} \le C_p \| f\|_{L^p(\Omega)},
\end{equation}
where $C_p$ depends only on $d$, $m$, $p$,
$\mu$, $r_0$, $\Omega$ and the constant $C_0$ in (\ref{reverse-holder}).
\end{thm}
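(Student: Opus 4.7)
The plan is to derive this theorem as a direct application of the real-variable $L^p$ extrapolation technique of Shen (building on Caffarelli--Peral), for which hypothesis \eqref{reverse-holder} is tailor-made. Standard $L^2$ energy theory under \eqref{ellipticity} yields the existence and uniqueness (modulo constants) of the weak solution $u$ together with the baseline estimate $\|\nabla u\|_{L^2(\Omega)} \le C\|f\|_{L^2(\Omega)}$. The task is to upgrade this to \eqref{4.1.2} for every $p > 2$ in the range allowed by the assumed reverse-Hölder exponent.

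The heart of the argument is a local Calderón--Zygmund decomposition of $u$. Fix an admissible ball $B = B(Q,r)$ (with $Q \in \partial\Omega$ or $B(Q,3r) \subset \Omega$) and split $u = v + w$ in $4B \cap \Omega$, where $v \in W^{1,2}(4B \cap \Omega)$ is chosen to absorb the inhomogeneity: solve the Neumann problem
\begin{equation*}
\mathcal{L}(v) = \mathrm{div}(f \chi_{4B \cap \Omega}) \ \text{ in } 4B \cap \Omega, \qquad \frac{\partial v}{\partial \nu} = -n \cdot (f \chi_{4B \cap \Omega}) \ \text{ on } \partial(4B \cap \Omega),
\end{equation*}
with mean zero. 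The standard energy identity gives
\begin{equation*}
\left( \fint_{B \cap \Omega} |\nabla v|^2 \right)^{1/2} \le C \left( \fint_{4B \cap \Omega} |f|^2 \right)^{1/2}.
\end{equation*}
The remainder $w = u - v$ then satisfies $\mathcal{L}(w) = 0$ in $3B \cap \Omega$ with $\frac{\partial w}{\partial \nu} = 0$ on $3B \cap \partial\Omega$, so hypothesis \eqref{reverse-holder} applies to $w$ and, combined with $|\nabla w| \le |\nabla u| + |\nabla v|$, yields
\begin{equation*}
\left( \fint_{B \cap \Omega} |\nabla w|^p \right)^{1/p} \le C \left( \fint_{2B \cap \Omega} |\nabla u|^2 \right)^{1/2} + C \left( \fint_{4B \cap \Omega} |f|^2 \right)^{1/2}.
\end{equation*}

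With the decomposition $\nabla u = \nabla w + \nabla v$ in hand, I would apply Shen's real-variable lemma from \cite{Shen-2007-boundary}: whenever a function $F \in L^2(\Omega)$ admits, at every admissible ball $B$, a splitting $F = F_B + R_B$ such that $R_B$ is controlled in $L^2$ by $\|f\|_{L^2}$ over a slightly enlarged ball and $F_B$ satisfies a reverse-Hölder inequality in terms of $|F|$ and $|f|$, then $F \in L^p$ with $\|F\|_{L^p(\Omega)} \le C_p\|f\|_{L^p(\Omega)}$ for every $p$ below the reverse-Hölder exponent. Taking $F_B = \nabla w$ and $R_B = \nabla v$, both hypotheses of that lemma are exactly what the two displays above establish.

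The main technical obstacle is ensuring that the two pieces $v$ and $w$ interact correctly at the boundary: one must set up the weak formulation of the auxiliary Neumann problem for $v$ on $4B \cap \Omega$ so that the subtraction $w = u - v$ actually produces a \emph{homogeneous} conormal derivative on $3B \cap \partial\Omega$, which is precisely the condition needed to invoke \eqref{reverse-holder}. This requires care near the portion of $\partial(4B \cap \Omega)$ lying inside $\Omega$, but is routine using the Lipschitz trace theorem. This whole program is carried out in detail by Geng in \cite{Geng}, which is exactly the result being cited.
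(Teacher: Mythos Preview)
The paper does not actually prove this theorem; it is quoted verbatim from \cite{Geng} as a black-box input, with only the remark that it was established ``using a real variable method originating in \cite{Caffarelli-1998} and further developed in \cite{Shen-2005-bounds, Shen-2006-ne, Shen-2007-boundary}.'' Your outline is a faithful sketch of precisely that method---the local Calder\'on--Zygmund splitting $u=v+w$ with $v$ absorbing the data via an auxiliary Neumann problem and $w$ a homogeneous solution to which the reverse-H\"older hypothesis applies, followed by Shen's extrapolation lemma---so there is nothing to compare: you have supplied the argument the paper only cites.
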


Now,  given $A\in \Lambda (\mu, \lambda, \tau)$ and $p>2$.
Let $\Omega$ be a $C^{1,\alpha_0}$ domain.
Suppose that $\mathcal{L}_\varep (u_\varep)=0$ in $3B\cap\Omega$ and
$\frac{\partial u_\varep}{\partial_{\nu_\varep}}=0$
on $3B\cap\partial\Omega$.
If $3B\subset \Omega$, the weak reverse H\"older 
inequality (\ref{reverse-holder}) for $u_\varep$ follows
from the interior estimate (\ref{interior-estimate}).
Suppose that $Q\in \partial\Omega$ and $B=B(Q,r)$.
We may use the interior estimate 
and boundary H\"older estimate (\ref{local-holder-estimate})
to obtain
\begin{equation}\label{4.1.3}
\aligned
|\nabla u_\varep (x)|
&\le C \delta(x)^{-1}\left(\average_{B(x,c\delta (x))}
|u_\varep (y)- u_\varep (x)|^2\, dy\right)^{1/2}\\
&\le C_\gamma
\left(\frac{r}{\delta (x)}\right)^{\gamma}
\left(\average_{B(Q,2r)\cap \Omega}
|\nabla u_\varep|^2\, dy\right)^{1/2}
\endaligned
\end{equation}
for any $\gamma\in (0,1)$ and $x\in B(Q,r)\cap \Omega$,
where $\delta (x)=\text{\rm dist}(x, \partial\Omega)$.
Choose $\gamma\in (0,1)$ so that $p\gamma<1$.
It is easy to see that (\ref{4.1.3}) implies
$$
\left(\average_{B\cap\Omega}
|\nabla u_\varep|^p\right)^{1/p}
\le C_p
\left(\average_{2B\cap\Omega}
|\nabla u_\varep|^2\right)^{1/2}.
$$
In view of Theorem \ref{Geng-theorem} 
we have proved Theorem \ref{W-1-p-theorem}
for the case $p>2$, $g=0$ and $F=0$.

\begin{lemma}\label{lemma-4.1}
Suppose $A\in \Lambda(\mu, \lambda,\tau)$.
Let $f\in L^p(\Omega)$, where
$\Omega$ be a bounded $C^{1,\alpha_0}$ domain and $1<p<\infty$.
Let $u\in W^{1,p}(\Omega)$ be a weak solution 
to $\mathcal{L}_\varep (u_\varep)=\text{\rm div}(f)$ in $\Omega$ and
$\frac{\partial u_\varep}{\partial\nu_\varep} =-n\cdot f$ on $\partial\Omega$.
Then $\|\nabla u_\varep\|_{L^p(\Omega)}
\le C_p\, \| f\|_{L^p(\Omega)}$.
\end{lemma}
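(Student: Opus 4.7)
The case $p\ge 2$ is already in hand: for $p=2$ this is the standard energy estimate based on ellipticity, and for $p>2$ the discussion immediately preceding the lemma shows that the boundary Hölder estimate \eqref{local-holder-estimate}, together with the interior gradient estimate, yields the weak reverse Hölder inequality \eqref{reverse-Holder-0} uniformly in $\varep$, which by Theorem \ref{Geng-theorem} gives $\|\nabla u_\varep\|_{L^p(\Omega)}\le C_p\|f\|_{L^p(\Omega)}$. So the only remaining task is the range $1<p<2$, which I plan to treat by a duality argument against the adjoint operator.

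Set $p'=p/(p-1)>2$ and let $A^*(y)=(a_{ji}^{\beta\alpha}(y))$. Since $A^*\in\Lambda(\mu,\lambda,\tau)$ as well (ellipticity, periodicity and smoothness all pass to the transpose, and no symmetry is being assumed here), the already established case $p'>2$ applies to the adjoint operator $\mathcal{L}_\varep^*=-\text{\rm div}(A^*(x/\varep)\nabla)$. Normalize $u_\varep$ so that $\int_\Omega u_\varep=0$. Given any $h\in C^\infty(\overline\Omega,\mathbb{R}^{dm})$, let $v_\varep\in W^{1,p'}(\Omega)$ be the unique weak solution, with $\int_\Omega v_\varep=0$, of
\begin{equation*}
\mathcal{L}_\varep^*(v_\varep)=\text{\rm div}(h)\ \text{ in }\Omega,\qquad \frac{\partial v_\varep}{\partial\nu_\varep^*}=-n\cdot h\ \text{ on }\partial\Omega.
\end{equation*}
The compatibility condition is built in via the divergence theorem, and by the case $p'>2$ applied to $\mathcal{L}_\varep^*$ we obtain $\|\nabla v_\varep\|_{L^{p'}(\Omega)}\le C\|h\|_{L^{p'}(\Omega)}$.

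Now I would use each of $u_\varep,v_\varep$ as a test function in the weak formulation for the other. Taking $\varphi=v_\varep$ in the equation for $u_\varep$ and $\varphi=u_\varep$ in the equation for $v_\varep$, and exploiting the identity $A(x/\varep)\nabla u_\varep\cdot\nabla v_\varep=A^*(x/\varep)\nabla v_\varep\cdot\nabla u_\varep$, the two identities reduce to
\begin{equation*}
\int_\Omega h\cdot\nabla u_\varep\,dx=\int_\Omega f\cdot\nabla v_\varep\,dx.
\end{equation*}
Hölder's inequality then yields $\left|\int_\Omega h\cdot\nabla u_\varep\right|\le\|f\|_{L^p(\Omega)}\|\nabla v_\varep\|_{L^{p'}(\Omega)}\le C\|f\|_{L^p(\Omega)}\|h\|_{L^{p'}(\Omega)}$, and taking the supremum over smooth $h$ with $\|h\|_{L^{p'}(\Omega)}\le 1$ (which is dense in $L^{p'}$) gives the desired bound $\|\nabla u_\varep\|_{L^p(\Omega)}\le C\|f\|_{L^p(\Omega)}$.

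There is no real obstacle in this scheme; the only points requiring care are the verification that $A^*\in\Lambda(\mu,\lambda,\tau)$ so that the $p>2$ result is legitimately available for the adjoint, and the justification of using $u_\varep\in W^{1,p}$ and $v_\varep\in W^{1,p'}$ as test functions in each other's weak formulations. The latter is standard: $W^{1,p}\times W^{1,p'}$ pair via Hölder so all integrals converge, and both functions can be approximated by $C^\infty(\overline\Omega)$ functions by density in $C^{1,\alpha_0}$ domains, so the weak formulations extend to this pairing.
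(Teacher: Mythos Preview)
Your proposal is correct and follows essentially the same duality argument as the paper: reduce to $1<p<2$, solve an adjoint problem for $\mathcal{L}_\varep^*$ with test data in $L^{p'}$, use the already-established $p'>2$ estimate for $A^*\in\Lambda(\mu,\lambda,\tau)$, and pair the two weak formulations to obtain the bound. The only cosmetic difference is that the paper takes the test data $g\in C_0^\infty(\Omega)$ (so that the conormal of $v_\varep$ vanishes), whereas you allow $h\in C^\infty(\overline{\Omega})$ with the matching boundary condition; both choices work.
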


\begin{proof}
The case $p>2$ was proved above. 
Suppose that $1<p<2$.
Let $g\in C^\infty_0(\Omega)$ and $v_\varep$ be a weak solution of 
$\mathcal{L}_\varep^* (v_\varep)=\text{div} (g)$ 
and $\frac{\partial v_\varep}{\partial\nu^*_{\varep}}
=0$ on $\partial\Omega$,
where $\mathcal{L}_\varep^*$ denotes the adjoint of $\mathcal{L}_\varep$.
Since $A^*\in \Lambda(\lambda, \mu, \tau)$ and $p^\prime>2$,
we have $\|\nabla v_\varep\|_{L^{p^\prime}(\Omega)} \le C \| g\|_{L^{p^\prime}(\Omega)}$. 
Also, note that 
\begin{equation}\label{4.1.1}
\int_\Omega
f_i^\alpha \cdot \frac{\partial v_\varep^\alpha}{\partial x_i}\, dx
=\int_\Omega a_{ij}^{\alpha\beta} \left(\frac{x}{\varep}\right)
\frac{\partial u_\varep^\beta}{\partial x_j} \cdot\frac{\partial v_\varep^\alpha}{\partial x_i}\, dx
=\int_\Omega g_i^\alpha \cdot \frac{\partial u_\varep^\alpha}{\partial x_i}\,dx,
\end{equation}
where $f=(f_i^\alpha)$ and $g=(g_i^\alpha)$.
The estimate $\|\nabla u_\varep\|_{L^p(\Omega)}
\le C\| f\|_{L^p(\Omega)}$ now follows from (\ref{4.1.1}) by duality.
\end{proof}

\begin{lemma}\label{lemma-4.2}
Suppose that $A\in \Lambda(\lambda, \mu, \tau)$.
Let $g=(g^\alpha)\in B^{-1/p, p}(\partial\Omega)$, where
$\Omega$ is a bounded $C^{1, \alpha_0}$ domain, $1<p<\infty$ and
$<g^\alpha,1>=0$. Let $u\in W^{1, p}(\Omega)$ be a weak solution to
$\mathcal{L}_\varep (u_\varep)=0$ in $\Omega$
and $\frac{\partial u_\varep}{\partial \nu_\varep}
=g$ on $\partial\Omega$.
Then $\|\nabla u_\varep\|_{L^p(\Omega)}\le C_p\, \| g\|_{B^{-1/p, p}(\partial\Omega)}$.
\end{lemma}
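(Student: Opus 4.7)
\noindent\textbf{Proposed proof of Lemma \ref{lemma-4.2}.}
The plan is to deduce the estimate from Lemma \ref{lemma-4.1} by a duality argument, since Lemma \ref{lemma-4.1} already handles the case of $\mathrm{div}(f)$ data for every $1<p<\infty$. The key point is that $A^*=(a_{ji}^{\beta\alpha})$ again lies in $\Lambda(\mu,\lambda,\tau)$, so Lemma \ref{lemma-4.1} applies equally well to the adjoint operator $\mathcal L_\varep^*$, and this lets us test $u_\varep$ against a well-chosen solution of an adjoint Neumann problem.

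Concretely, I would fix an arbitrary $h=(h_i^\alpha)\in C_0^\infty(\Omega;\mathbb R^{dm})$ (dense in $L^{p'}(\Omega;\mathbb R^{dm})$) and let $v_\varep\in W^{1,2}(\Omega)$ be the weak solution, normalized by $\int_\Omega v_\varep=0$, of
$$
\mathcal L_\varep^*(v_\varep)=\mathrm{div}(h)\quad\text{in }\Omega,\qquad
\frac{\partial v_\varep}{\partial\nu_\varep^*}=-n\cdot h\quad\text{on }\partial\Omega.
$$
This problem satisfies the compatibility condition automatically (the divergence pair forces $\langle -n\cdot h,1\rangle=-\int_\Omega \mathrm{div}(h)$), and Lemma \ref{lemma-4.1} applied to $\mathcal L_\varep^*$ with exponent $p'$ gives $\|\nabla v_\varep\|_{L^{p'}(\Omega)}\le C\|h\|_{L^{p'}(\Omega)}$, uniformly in $\varep$. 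Combined with the Poincaré inequality (permitted since $\int_\Omega v_\varep=0$) and the trace theorem on the $C^{1,\alpha_0}$ domain $\Omega$, this yields
$$
\|v_\varep\|_{W^{1,p'}(\Omega)}+\|v_\varep|_{\partial\Omega}\|_{B^{1/p,p'}(\partial\Omega)}\le C\|h\|_{L^{p'}(\Omega)},
$$
where $B^{1/p,p'}(\partial\Omega)$ is the trace space of $W^{1,p'}(\Omega)$ and, by definition, the predual of $B^{-1/p,p}(\partial\Omega)$.

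Testing the weak formulation for $u_\varep$ against $\varphi=v_\varep$ and the weak formulation for $v_\varep$ against $\varphi=u_\varep$ (each being a legitimate test by density, using $u_\varep\in W^{1,p}$ and $v_\varep\in W^{1,p'}$) gives respectively
$$
\int_\Omega a_{ij}^{\alpha\beta}(x/\varep)\,\tfrac{\partial u_\varep^\beta}{\partial x_j}\tfrac{\partial v_\varep^\alpha}{\partial x_i}\,dx=\langle g^\alpha,v_\varep^\alpha|_{\partial\Omega}\rangle,\qquad
\int_\Omega a_{ij}^{\alpha\beta}(x/\varep)\,\tfrac{\partial u_\varep^\beta}{\partial x_j}\tfrac{\partial v_\varep^\alpha}{\partial x_i}\,dx=-\int_\Omega h_i^\alpha\,\tfrac{\partial u_\varep^\alpha}{\partial x_i}\,dx,
$$
so that $\int_\Omega h\cdot\nabla u_\varep\,dx=-\langle g,v_\varep|_{\partial\Omega}\rangle$. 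The hypothesis $\langle g^\alpha,1\rangle=0$ ensures that the right-hand side is independent of the additive normalization of $v_\varep$, as it must be. Bounding the boundary pairing by duality,
$$
\bigl|\langle g,v_\varep|_{\partial\Omega}\rangle\bigr|\le \|g\|_{B^{-1/p,p}(\partial\Omega)}\|v_\varep|_{\partial\Omega}\|_{B^{1/p,p'}(\partial\Omega)}\le C\|g\|_{B^{-1/p,p}(\partial\Omega)}\|h\|_{L^{p'}(\Omega)},
$$
and taking the supremum over $h$ with $\|h\|_{L^{p'}(\Omega)}\le 1$ yields $\|\nabla u_\varep\|_{L^p(\Omega)}\le C\|g\|_{B^{-1/p,p}(\partial\Omega)}$, as desired. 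I do not anticipate a serious obstacle: the only delicate points are verifying that both trace/duality identifications are the standard ones on a $C^{1,\alpha_0}$ domain and that the two mutual test-function insertions are justified, both of which are routine given Lemma \ref{lemma-4.1} and the regularity of $\partial\Omega$.
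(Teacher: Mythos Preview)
Your proposal is correct and follows essentially the same duality argument as the paper: solve the adjoint problem $\mathcal{L}_\varep^*(v_\varep)=\mathrm{div}(h)$ with homogeneous conormal data (your $-n\cdot h$ vanishes since $h\in C_0^\infty(\Omega)$), apply Lemma~\ref{lemma-4.1} to get $\|\nabla v_\varep\|_{L^{p'}}\le C\|h\|_{L^{p'}}$, use the trace theorem and Poincar\'e inequality to control $\|v_\varep\|_{B^{1/p,p'}(\partial\Omega)}$, and conclude by duality. The only cosmetic difference is that you normalize $v_\varep$ to have mean zero from the outset, whereas the paper subtracts the mean $E$ at the moment of estimating the boundary pairing; the hypothesis $\langle g^\alpha,1\rangle=0$ makes these equivalent, as you note.
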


\begin{proof}
Let $f\in C_0^\infty(\Omega)$ and $v_\varep$ be a weak solution to
$\mathcal{L}_\varep^* (v_\varep)=\text{\rm div}(f)$ in $\Omega$ and $\frac{\partial v_\varep}
{\partial\nu^*_\varep}=0$ on $\partial\Omega$.
Since $A^*\in \Lambda(\lambda, \mu,\tau)$, by Lemma \ref{lemma-4.1},
we have $\|\nabla v_\varep\|_{L^{p^\prime}(\Omega)} \le C\, \| f\|_{L^{p^\prime}(\Omega)}$.

Note that
\begin{equation}\label{4.2.1}
\int_\Omega f_i^\alpha \cdot \frac{\partial u_\varep^\alpha}{\partial x_i}\, dx
=-\int_\Omega a_{ij}^{\alpha\beta}
\left(\frac{x}{\varep}\right) \frac{\partial u_\varep^\beta}{\partial x_j}
\cdot \frac{\partial v_\varep^\alpha}{\partial x_i}\, dx
=-<g, v_\varep>.
\end{equation}
Let $E$ be the average of $v_\varep$ over $\Omega$. Then
\begin{equation}\label{4.2.2}
\aligned
\big| <g, v_\varep>\big|
& =\big| <g, v_\varep-E>\big|
\le \| g\|_{B^{-1/p, p}(\partial\Omega)} 
\| v_\varep -E\|_{B^{1/p, p^\prime}(\partial\Omega)}\\
&\le C\, \| g\|_{B^{-1/p, p}(\partial\Omega)} \| v_\varep-E\|_{W^{1,p^\prime}(\Omega)} \\
& \le C\, \| g\|_{B^{-1/p, p}(\partial\Omega)} 
\|\nabla v_\varep\|_{L^{p^\prime}(\Omega)}\\
& \le C
\| g\|_{B^{-1/p, p}(\partial\Omega)}  \| f\|_{L^{p^\prime} (\Omega)},
\endaligned
\end{equation}
where we have used a trace theorem for the second inequality
and Poincar\'e inequality for the third.
The estimate $\|\nabla u_\varep\|_{L^p(\Omega)}
\le C\, \| g\|_{B^{-1/p, p}(\partial\Omega)}$
follows from (\ref{4.2.1})-(\ref{4.2.2}) by duality. 
\end{proof}

Let $1<q<d$ and $\frac{1}{p}=\frac{1}{q}-\frac{1}{d}$.
In the proof of the next lemma, we will need the following Sobolev inequality 
\begin{equation}\label{Sobolev}
\left(\int_\Omega |u|^p\, dx \right)^{1/p}
\le C\left(\int_\Omega |\nabla u|^q \, dx \right)^{1/q},
\end{equation}
where $u\in W^{1,q}(\Omega)$ and $\int_{\partial\Omega} u =0$.

\begin{lemma}\label{lemma-4.4}
Suppose that $A\in \Lambda(\mu, \lambda,\tau)$.
Let $F\in L^q(\Omega)$, where $1<q<d$ and $\Omega$ is a bounded $C^{1, \alpha_0}$
domain.
Let $u\in W^{1,p}(\Omega)$ be a weak solution to
$\mathcal{L}_\varep (u_\varep)=F$ in $\Omega$ and $\frac{\partial u_\varep}{\partial\nu_\varep}
=-b$ on $\partial\Omega$, where $\frac{1}{p}=\frac{1}{q}-\frac{1}{d}$
and $b=\frac{1}{|\partial\Omega|}\int_\Omega F$.
Then $\|\nabla u_\varep\|_{L^p(\Omega)} \le C\, \| F\|_{L^q(\Omega)}$.
\end{lemma}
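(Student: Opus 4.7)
The strategy is duality, in the spirit of Lemmas \ref{lemma-4.1} and \ref{lemma-4.2}. Since $C_0^\infty(\Omega)^{dm}$ is dense in $L^{p'}(\Omega)^{dm}$, it suffices to bound $\bigl|\int_\Omega h\cdot\nabla u_\varep\,dx\bigr|$ by $C\|F\|_{L^q(\Omega)}\|h\|_{L^{p'}(\Omega)}$ uniformly for an arbitrary $h=(h_i^\alpha)\in C_0^\infty(\Omega)^{dm}$, and then pass to the supremum. I first observe the exponent bookkeeping: from $1/p'=1-(1/q-1/d)$ and $1<q<d$ one deduces $1<p'<d$, which places $p'$ inside the ranges required by both Lemma \ref{lemma-4.1} and the Sobolev inequality (\ref{Sobolev}).

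For such an $h$, I would introduce the auxiliary function $v_\varep\in W^{1,p'}(\Omega)$, unique up to additive constants, as the weak solution of the adjoint Neumann problem
\[
\mathcal{L}_\varep^*(v_\varep)=\text{\rm div}(h)\ \text{ in }\Omega,\qquad\frac{\partial v_\varep}{\partial\nu_\varep^*}=-n\cdot h\ \text{ on }\partial\Omega,
\]
whose compatibility condition holds since $\int_\Omega 0+\int_{\partial\Omega}(-n\cdot h)\,d\sigma=0$. Because $A^*\in\Lambda(\mu,\lambda,\tau)$, Lemma \ref{lemma-4.1} applied to $\mathcal{L}_\varep^*$ with exponent $p'$ gives $\|\nabla v_\varep\|_{L^{p'}(\Omega)}\le C\|h\|_{L^{p'}(\Omega)}$. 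The heart of the argument is a Green-type identity: use $v_\varep$ as a test function in the weak formulation for $u_\varep$ and $u_\varep$ as a test function in the weak formulation for $v_\varep$. The two resulting bilinear forms $\int_\Omega a_{ij}^{\alpha\beta}(x/\varep)\,\partial_j u_\varep^\beta\,\partial_i v_\varep^\alpha\,dx$ and $\int_\Omega a_{ji}^{\beta\alpha}(x/\varep)\,\partial_j v_\varep^\beta\,\partial_i u_\varep^\alpha\,dx$ coincide after relabeling indices, so equating the respective right-hand sides yields
\[
\int_\Omega h_i^\alpha\,\frac{\partial u_\varep^\alpha}{\partial x_i}\,dx=-\int_\Omega F^\alpha v_\varep^\alpha\,dx+\int_{\partial\Omega}b^\alpha v_\varep^\alpha\,d\sigma.
\]

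Since $b^\alpha=\frac{1}{|\partial\Omega|}\int_\Omega F^\alpha\,dx$ is constant, the boundary integral equals $\int_\Omega F^\alpha\bar v_\varep^\alpha\,dx$, where $\bar v_\varep^\alpha:=\frac{1}{|\partial\Omega|}\int_{\partial\Omega}v_\varep^\alpha\,d\sigma$, and the identity collapses to $\int_\Omega h\cdot\nabla u_\varep\,dx=-\int_\Omega F\cdot(v_\varep-\bar v_\varep)\,dx$. Applying H\"older's inequality with exponents $q$ and $q'$ and then the Sobolev inequality (\ref{Sobolev}) to $v_\varep-\bar v_\varep$ (which has zero boundary mean) with the exponent relation $1/q'=1/p'-1/d$ gives $\|v_\varep-\bar v_\varep\|_{L^{q'}(\Omega)}\le C\|\nabla v_\varep\|_{L^{p'}(\Omega)}\le C\|h\|_{L^{p'}(\Omega)}$, and taking the supremum over $h$ yields the desired bound. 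The only genuine subtlety is the bookkeeping that ensures the constant boundary datum $-b$ supplies precisely the mean correction on $v_\varep$ needed to make (\ref{Sobolev}) applicable; once that is observed, the remainder is a routine assembly of the duality/Sobolev chain.
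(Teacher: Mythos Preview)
Your proposal is correct and follows essentially the same route as the paper: duality via the adjoint Neumann problem for $v_\varep$, Lemma~\ref{lemma-4.1} for the $W^{1,p'}$ bound on $\nabla v_\varep$, the Green-type identity reducing the pairing to $\int_\Omega F\cdot(v_\varep-\bar v_\varep)$, and then H\"older together with the Sobolev inequality~(\ref{Sobolev}). The paper's proof is identical up to notation (the test field is called $f$, and the boundary condition on $v_\varep$ is written simply as $\partial v_\varep/\partial\nu_\varep^*=0$, which agrees with your $-n\cdot h$ since $h\in C_0^\infty(\Omega)$).
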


\begin{proof}
Let $f\in C_0^\infty(\Omega)$ and $v_\varep$ be a weak solution to
$(\mathcal{L}_\varep)^* (v_\varep)=\text{\rm div} (f)$ in $\Omega$
and $\frac{\partial v_\varep}{\partial\nu_\varep}=0$ on $\partial\Omega$.
By Lemma \ref{lemma-4.1}, we have $\|\nabla v_\varep\|_{L^{p^\prime}(\Omega)}
\le C\, \| f\|_{L^{p^\prime}(\Omega)}$.
Note that
\begin{equation}\label{4.4.1}
\aligned
\int_\Omega \frac{\partial u_\varep^\alpha}{\partial x_i} \cdot f_i^\alpha\, dx
& =\int_\Omega a_{ij}^{\alpha\beta}\left(\frac{x}{\varep}\right) \frac{\partial u_\varep^\beta}
{\partial x_j}\cdot \frac{\partial v_\varep^\alpha}{\partial x_i}\, dx\\
&=\int_\Omega F\cdot v_\varep\, dx
-\int_{\partial\Omega} b\cdot v_\varep\, d\sigma\\
&=\int_\Omega F (v_\varep -E)\, dx,
\endaligned
\end{equation}
where $E$ is the average of $v_\varep$ over $\partial\Omega$.
It follows from (\ref{4.4.1}) and Sobolev inequality (\ref{Sobolev}) that
$$
\aligned
\big|\int_\Omega \frac{\partial u_\varep^\alpha}{\partial x_i} \cdot f_i^\alpha\, dx\big|
& \le \|F\|_{L^q(\Omega)}
\| v_\varep -E\|_{L^{q^\prime}(\Omega)}\\
& \le C \| F\|_{L^q(\Omega)} \|\nabla v_\varep\|_{L^{p^\prime}(\Omega)}\\
&\le C \| F\|_{L^q(\Omega)}\| f\|_{L^{p^\prime}(\Omega)}.
\endaligned
$$
By duality this gives $\|\nabla u_\varep\|_{L^p(\Omega)} \le C\, \|F\|_{L^q(\Omega)}$.
\end{proof}

\noindent{\bf Proof of Theorem \ref{W-1-p-theorem}.}
Let $v_\varep$ be a weak solution to 
$\mathcal{L}_\varep (v_\varep) =\text{\rm div} (f)$ in $\Omega$
and $\frac{\partial v_\varep}{\partial \nu_\varep} = -n\cdot f$ on
$\partial\Omega$.
Let $w_\varep$ be a weak solution to
$\mathcal{L}_\varep (w_\varep) =F$ in $\Omega$ and
$\frac{\partial w_\varep}{\partial \nu_\varep} = -b$ on
$\partial\Omega$, where $b=\frac{1}{|\partial\Omega|}\int_\Omega F$.
Finally, let $h_\varep =u_\varep -v_\varep-w_\varep$. Then
$\mathcal{L}_\varep (h_\varep) =0$ in $\Omega$ and
$\frac{\partial h_\varep}{\partial\nu_\varep}
=g+b$ on $\partial\Omega$.
It follows from Lemmas \ref{lemma-4.1}, \ref{lemma-4.2} and \ref{lemma-4.4} that
$$
\aligned
\|\nabla u_\varep\|_{L^p(\Omega)}
& \le \|\nabla v_\varep\|_{L^p(\Omega)}
+\| \nabla w_\varep\|_{L^p(\Omega)} +\|\nabla h_\varep\|_{L^p(\Omega)}\\
& \le C\, 
\left\{ \| f\|_{L^p(\Omega)}
+\| F\|_{L^q(\Omega)}
+\| g\|_{B^{-1/p, p}(\partial\Omega)}\right\},
\endaligned
$$
where $q=\frac{pd}{p+d}$ for $p>\frac{d}{d-1}$, and $q>1$ for $1<p\le \frac{d}{d-1}$.
This completes the proof.
\qed

\section{A matrix of Neumann functions}

Let $\Gamma_\varep (x,y)
=\big(\Gamma_{A,\varep}^{\alpha\beta} (x,y)\big)_{m\times m}$
denote the matrix of fundamental solutions of $\mathcal{L}_\varep$ in $\br^d$,
with pole at $y$. Under the assumption $A\in \Lambda(\mu,\lambda, \tau)$,
one may use the interior estimate (\ref{interior-estimate}) to show that
for $d\ge 3$,
\begin{equation}\label{fundamental-estimate-1}
|\Gamma_\varep (x,y)|\le C|x-y|^{2-d}
\end{equation}
and
\begin{equation}\label{fundamental-estimate-2}
|\nabla_x\Gamma_\varep (x,y)| +|\nabla_y \Gamma_\varep (x,y)|\le C |x-y|^{1-d},
\end{equation}
where
$C$ depends only on $d$, $m$, $\mu$, $\lambda$ and $\tau$
(see e.g. \cite{Hofmann-Kim-2007}; the size estimate (\ref{fundamental-estimate-1})
also follows from \cite{ERS}).
Let $V_\varep (x,y) =\big(V_{A,\varep}^{\alpha\beta}(x,y)\big)_{m\times m}$, 
where for each $y\in \Omega$,
$V_\varep^\beta
(x,y)=\big(V_{A,\varep}^{1\beta}(x,y),\dots, V_{A,\varep}^{m\beta}(x,y)\big)$ solves
\begin{equation}\label{definition-of-V}
\left\{
\aligned
\mathcal{L}_\varep \big( V_\varep^\beta (\cdot, y)\big) & =0  \qquad\text{ in } \Omega,\\
\frac{\partial}{\partial \nu_\varep}
\big\{ V_\varep^\beta (\cdot, y)\big\} & =\frac{\partial}{\partial\nu_\varep}
\big\{ \Gamma_\varep^\beta (\cdot, y)\big\} +\frac{e^\beta}{|\partial\Omega|}  \qquad
\text{ on }\partial\Omega,\\
\int_{\partial\Omega} V_\varep^{\beta} (x,y)\, d\sigma (x)
& =\int_{\partial\Omega}\Gamma_\varep^{\beta} (x,y)\, d\sigma (x),
\endaligned
\right.
\end{equation}
where $\Gamma_\varep^\beta (x,y)=(\Gamma_{A,\varep}^{1\beta}(x,y),
 \dots, \Gamma_{A,\varep}^{m\beta}
(x,y))$
and $e^\beta =(0, \dots, 1, \dots, 0)$ with $1$ in the $\beta^{th}$ position.
We now define
\begin{equation}\label{definition-of-N}
N_\varep (x,y)=\big( N^{\alpha\beta}_{A,\varep} (x,y)\big)_{m\times m}
=\Gamma_\varep (x,y) -V_\varep (x,y),
\end{equation}
for $x,y\in \Omega$.
Note that, if $N_\varep^\beta (x,y) =\Gamma_\varep^\beta (x,y)-V_\varep^\beta (x,y)$,
\begin{equation}\label{equation-for-N}
\left\{
\aligned
\mathcal{L}_\varep \big\{ N^\beta_\varep (\cdot, y)\} & 
=e^\beta\delta_y(x) \qquad\text{ in } \Omega,\\
\frac{\partial}{\partial\nu_\varep} \big\{ N^\beta_\varep (\cdot, y)\big\}
& =-e^\beta |\partial\Omega|^{-1} \qquad \text{ on } \partial\Omega\\
\int_{\partial\Omega} N^\beta_\varep (x,y)\, d\sigma (x) & =0,
\endaligned
\right.
\end{equation}
where $\delta_y (x)$ denotes the Dirac delta function with pole at $y$.
We will call $N_\varep (x,y)$ the matrix of Neumann functions for
$\mathcal{L}_\varep$ in $\Omega$.

\begin{lemma}\label{symmetry-lemma}
For any $x,y\in \Omega$, we have
\begin{equation}\label{Neumann-symmetry}
N_{A,\varep}^{\alpha\beta} (x,y)
=N_{A^*, \varep}^{\beta\alpha}(y,x),
\end{equation}
where $A^*$ denotes the adjoint of $A$.
\end{lemma}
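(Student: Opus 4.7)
The plan is to apply Green's identity to suitable columns of the two Neumann functions and to use their normalizations to kill the boundary terms. Fix $x,y\in\Omega$ with $x\neq y$, and let $u(z)$ denote the $\beta$-th column of $N_{A,\varep}(z,y)$, so that $u^\gamma(z)=N_{A,\varep}^{\gamma\beta}(z,y)$; likewise let $v(z)$ denote the $\alpha$-th column of $N_{A^*,\varep}(z,x)$. By (\ref{equation-for-N}) applied to $\mathcal{L}_\varep$ and to its adjoint $\mathcal{L}_\varep^* = -\text{\rm div}(A^*(\cdot/\varep)\nabla)$, one has $\mathcal{L}_\varep u = e^\beta\delta_y$ in $\Omega$, $\partial u/\partial\nu_\varep = -e^\beta/|\partial\Omega|$ on $\partial\Omega$, $\int_{\partial\Omega} u\,d\sigma=0$, and analogous identities for $v$ with $\alpha$, $x$, $e^\alpha$, $\mathcal{L}_\varep^*$, $\nu_\varep^*$ in place of $\beta$, $y$, $e^\beta$, $\mathcal{L}_\varep$, $\nu_\varep$.

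Formally, two integrations by parts yield
$$
\int_\Omega \mathcal{L}_\varep u\cdot v\, dz = \int_\Omega u\cdot\mathcal{L}_\varep^* v\, dz + \int_{\partial\Omega} u\cdot\tfrac{\partial v}{\partial\nu_\varep^*}\, d\sigma - \int_{\partial\Omega}\tfrac{\partial u}{\partial\nu_\varep}\cdot v\, d\sigma.
$$
Both boundary integrals vanish, since $\partial u/\partial\nu_\varep$ and $\partial v/\partial\nu_\varep^*$ are constant multiples of $e^\beta$ and $e^\alpha$ respectively, while the vector-valued functions $u$ and $v$ each have mean zero on $\partial\Omega$ (so in particular $\int_{\partial\Omega} u^\alpha\,d\sigma = 0 = \int_{\partial\Omega} v^\beta\,d\sigma$). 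Evaluating the left- and right-hand volume integrals against the Dirac masses $e^\beta\delta_y$ and $e^\alpha\delta_x$ then gives $v^\beta(y) = u^\alpha(x)$, which is exactly $N_{A^*,\varep}^{\beta\alpha}(y,x) = N_{A,\varep}^{\alpha\beta}(x,y)$, the identity (\ref{Neumann-symmetry}).

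The only technical obstacle is justifying the integration by parts near the singularities of $u$ at $y$ and of $v$ at $x$. I would perform Green's identity on $\Omega_r := \Omega\setminus(\overline{B(y,r)}\cup\overline{B(x,r)})$, where both $u$ and $v$ are smooth by local elliptic regularity, and then let $r\to 0$. Using the decomposition $N_\varep = \Gamma_\varep - V_\varep$ from (\ref{definition-of-N}), the pointwise bounds (\ref{fundamental-estimate-1})-(\ref{fundamental-estimate-2}) on the fundamental solution and its gradient, together with the fact that $V_\varep(\cdot,y)$ is smooth in a neighborhood of $y$ (it solves $\mathcal{L}_\varep V_\varep(\cdot,y) = 0$ there, so Theorem \ref{interior-estimate-theorem} provides local gradient bounds), a standard calculation shows that the surface integrals over $\partial B(y,r)$ and $\partial B(x,r)$ produce exactly $v^\beta(y)$ and $u^\alpha(x)$ in the limit, while the remaining volume and $\partial\Omega$ pieces converge to the formal ones above. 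This yields the desired symmetry.
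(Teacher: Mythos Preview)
Your argument is correct. Both proofs rest on Green's second identity, but the routes differ. The paper first invokes the known symmetry $\Gamma_{A,\varep}^{\alpha\beta}(x,y)=\Gamma_{A^*,\varep}^{\beta\alpha}(y,x)$ of the fundamental solution, and then applies Green's representation to the columns of $V_\varep$ against those of $\Gamma_\varep$ and $V_\varep$ to obtain an expression for $V_{A,\varep}^{\alpha\beta}(x,y)+\Gamma_{A,\varep}^{\alpha\beta}(x,y)$ (minus certain boundary averages) that is manifestly symmetric in the two triples of data; subtracting the swapped version yields $V_{A,\varep}^{\alpha\beta}(x,y)=V_{A^*,\varep}^{\beta\alpha}(y,x)$, hence the result. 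Your approach bypasses the decomposition $N_\varep=\Gamma_\varep-V_\varep$ at the structural level and applies Green's identity directly to the two Neumann columns, exploiting the specific conormal data $-e^\beta/|\partial\Omega|$ together with the mean-zero normalization $\int_{\partial\Omega}N_\varep\,d\sigma=0$ to annihilate the boundary terms outright. This is cleaner and makes the role of the normalization transparent; the price is that you must excise \emph{two} singular points (at $x$ and at $y$) and control both limits, whereas the paper, by pairing a singular object against a regular one, only ever has a single singularity in play at a time. Either way the limiting argument is routine given (\ref{fundamental-estimate-1})--(\ref{fundamental-estimate-2}) and the interior regularity of $V_\varep$.
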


\begin{proof}
Note that
\begin{equation}\label{fundamental-symmetry}
\Gamma_{A,\varep}^{\alpha\beta} (x,y)
=\Gamma_{A^*, \varep}^{\beta\alpha} (y,x),
\qquad \text{ for any } x,y\in \Omega.
\end{equation}
Using the Green's representation formula for $\mathcal{L}_\varep$ on $\Omega$,
(\ref{definition-of-V}) and (\ref{equation-for-N}) one may show that
$$
\aligned
&
V_{A,\varep}^{\alpha\beta} (x,y)
+\Gamma_{A, \varep}^{\alpha\beta}(x,y)
-\frac{1}{|\partial\Omega|}
\int_{\partial\Omega}
\left\{ \Gamma_{A,\varep}^{\alpha\beta} (z,y)
+\Gamma_{A^*, \varep}^{\beta\alpha} (z,x)\right\}\, d\sigma (z)\\
&
=\int_\Omega a_{ij}^{\gamma\delta} \left(\frac{z}{\varep}\right)
\frac{\partial}{\partial z_i}
\bigg\{ \Gamma_{A^*, \varep}^{\gamma\alpha}(z,x)\bigg\}
\cdot \frac{\partial}{\partial z_j}
\bigg\{ \Gamma_{A, \varep}^{\delta\beta}(z,y)\bigg\}\, dz\\
&\qquad
-
\int_\Omega a_{ij}^{\gamma\delta} \left(\frac{z}{\varep}\right)
\frac{\partial}{\partial z_i}
\bigg\{ V_{A^*, \varep}^{\gamma\alpha}(z,x)\bigg\}
\cdot \frac{\partial}{\partial z_j}
\bigg\{ V_{A, \varep}^{\delta\beta}(z,y)\bigg\}\, dz.
\endaligned
$$
This gives $V_{A,\varep}^{\alpha\beta}(x,y)
=V_{A^*, \varep}^{\beta\alpha}(y,x)$ and hence (\ref{Neumann-symmetry}).
\end{proof}

\begin{thm}\label{Neumann-theorem-5.2}
Let $\Omega$ be a bounded $C^{1,\alpha_0}$ domain and
 $A\in \Lambda(\mu, \lambda, \tau)$.
Let $x_0,y_0, z_0\in \Omega$ be such that $|x_0-z_0|<(1/4)|x_0-y_0|$.
Then for any $\gamma\in (0,1)$,
\begin{equation}\label{5.2}
\left\{
\average_{B(y_0,\rho/4)\cap\Omega}
\big|\nabla_y\big\{ N_\varep (x_0,y) -N_\varep (z_0,y)\big\} |^2\, dy\right\}^{1/2}
\le C \rho^{1-d} \left(\frac{|x_0-z_0|}{\rho}\right)^{\gamma},
\end{equation}
where $\rho=|x_0-y_0|$ and 
$C$ depends only on $\mu$, $\lambda$, $\tau$, $\gamma$ and $\Omega$. 
\end{thm}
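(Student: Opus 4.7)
The plan is to reduce the estimate to a homogeneous Neumann problem satisfied by the difference $w(y) := N_\varep(x_0, y) - N_\varep(z_0, y)$, then combine a Caccioppoli inequality in $y$ with the boundary H\"older estimate of Theorem \ref{boundary-holder-theorem} applied in the $x$-variable to gain the factor $(|x_0-z_0|/\rho)^\gamma$. Since $|y_0 - x_0| = \rho$ and $|y_0 - z_0| \ge 3\rho/4$, the enlarged ball $B(y_0, \rho/2)$ is disjoint from the source points $\{x_0, z_0\}$, and this is the region on which I will work.

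First, by the symmetry identity (\ref{Neumann-symmetry}) I rewrite $w^{\alpha\beta}(y) = N^{\beta\alpha}_{A^*, \varep}(y, x_0) - N^{\beta\alpha}_{A^*, \varep}(y, z_0)$. Subtracting the two instances of the system (\ref{equation-for-N}) for $N_{A^*,\varep}$, the constant conormal data $-e^\beta/|\partial\Omega|$ cancels, so for each $\beta$ the column $w^{\cdot \beta}$ satisfies $\mathcal{L}_\varep^*(w^{\cdot \beta}) = e^\beta[\delta_{x_0} - \delta_{z_0}]$ in $\Omega$ with $\partial w^{\cdot \beta}/\partial \nu_\varep^* = 0$ on $\partial\Omega$. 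In particular, $w$ is a weak solution of the homogeneous adjoint system on $B(y_0, \rho/2) \cap \Omega$ with zero conormal derivative on $B(y_0, \rho/2) \cap \partial\Omega$, so Caccioppoli's inequality (\ref{Cacciopoli}) (with $g=0$, in boundary-chart form if $y_0$ is close to $\partial\Omega$ and in interior form otherwise) gives
\[
\int_{B(y_0,\rho/4)\cap\Omega} |\nabla_y w|^2 \, dy \le \frac{C}{\rho^2} \int_{B(y_0,\rho/2)\cap\Omega} |w|^2 \, dy.
\]

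Second, to control $|w(y)|$ pointwise on $B(y_0, \rho/2) \cap \Omega$, I apply Theorem \ref{boundary-holder-theorem} in the $x$-variable to $F_y(x) := N_\varep(x, y)$. Since $|y - x_0|, |y - z_0| \gtrsim \rho$, the function $F_y$ solves $\mathcal{L}_\varep F_y = 0$ in a ball of radius $\sim \rho$ around $x_0$ intersected with $\Omega$, with bounded conormal data $|-e^\beta/|\partial\Omega|| \le C$ on the boundary portion (read off from (\ref{equation-for-N})). Theorem \ref{boundary-holder-theorem} (or its interior analogue when $x_0$ is far from $\partial\Omega$) then yields
\[
|w(y)| = |F_y(x_0) - F_y(z_0)| \le C\left(\frac{|x_0-z_0|}{\rho}\right)^\gamma \left\{ \left(\average_{B^*_\rho} |N_\varep(x,y)|^2 \, dx\right)^{1/2} + C\rho\right\},
\]
where $B^*_\rho$ is an appropriate ball near $x_0$ of radius of order $\rho$. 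Once the $L^2$-average on the right is bounded by $C\rho^{2-d}$, substitution into the Caccioppoli inequality (and the inequality $\rho \le C\rho^{2-d}$ valid for $d\ge 3$ and $\rho$ bounded by $\operatorname{diam}\Omega$) produces
\[
\left(\average_{B(y_0,\rho/4)\cap\Omega} |\nabla_y w|^2\right)^{1/2} \le \frac{C}{\rho}\cdot C\rho^{2-d}\left(\frac{|x_0-z_0|}{\rho}\right)^\gamma = C\rho^{1-d}\left(\frac{|x_0-z_0|}{\rho}\right)^\gamma,
\]
which is exactly (\ref{5.2}).

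The main obstacle is establishing the $L^2$-average bound $(\average_{B_\rho^*} |N_\varep(x,y)|^2 dx)^{1/2} \le C\rho^{2-d}$ uniformly in $\varep$. The fundamental-solution part $\Gamma_\varep$ is handled immediately by (\ref{fundamental-estimate-1}), but the smooth correction $V_\varep(\cdot, y)$ solves a homogeneous Neumann system whose data $\partial_{\nu_\varep}\Gamma_\varep(\cdot, y) + e^\beta/|\partial\Omega|$ can be singular when $y$ is close to $\partial\Omega$. Controlling $V_\varep$ uniformly naturally proceeds through a companion pointwise size estimate $|N_\varep(x,y)| \le C|x-y|^{2-d}$ for the Neumann function itself, obtained by combining an energy estimate with the uniform boundary H\"older regularity of Section 3 and a scaling argument. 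This is the main technical ingredient needed and is best packaged as an auxiliary size lemma for $N_\varep$ in this section; once it is in hand, the preceding three steps produce the claimed inequality.
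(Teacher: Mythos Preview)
Your argument is correct, but it takes a genuinely different route from the paper, and the difference is instructive.

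The paper avoids the size estimate $|N_\varep(x,y)|\le C|x-y|^{2-d}$ entirely by a duality argument: for $f\in C_0^\infty(B(y_0,\rho/2)\cap\Omega)$ with $\int f=0$, the function $u_\varep(x)=\int N_\varep(x,y)f(y)\,dy$ solves $\mathcal{L}_\varep u_\varep=f$ with zero conormal data; the energy identity together with Poincar\'e (exploiting $\int f=0$) gives $\|\nabla u_\varep\|_{L^2(\Omega)}\le C\rho\|f\|_{L^2}$, and the boundary H\"older estimate in $x$ then yields $|u_\varep(x_0)-u_\varep(z_0)|\le C\rho^{2-d/2}(|x_0-z_0|/\rho)^\gamma\|f\|_{L^2}$. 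Duality converts this into an $L^2$ bound on $W-\overline{W}$ over $B(y_0,\rho/2)\cap\Omega$, and Caccioppoli finishes the job. The only inputs are the energy estimate and the H\"older estimate of Section~3.

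Your approach instead bounds $|w(y)|$ pointwise by applying the H\"older estimate in $x$ to $N_\varep(\cdot,y)$ for each fixed $y$, which forces you to control $\big(\average|N_\varep(x,y)|^2\,dx\big)^{1/2}$; this is exactly where the pointwise size estimate enters. That estimate is indeed provable independently (and the paper does so later, via Lemma~\ref{lemma-5.3} and a two-step bootstrap using Theorem~\ref{boundary-holder-theorem}), so there is no circularity. But your route front-loads a nontrivial auxiliary result, whereas the paper's duality trick sidesteps it completely and gives a shorter, more self-contained proof of this particular theorem. The trade-off is that your argument is more direct and perhaps more transparent once the size bound is in hand; the paper's is slicker but relies on recognizing that the mean-zero constraint on the test function buys an extra power of $\rho$ via Poincar\'e.
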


\begin{proof}
Let $f\in C_0^\infty (B(y_0,\rho/2)\cap\Omega)$ and $\int_\Omega f=0$.
Let
$$
u_\varep (x)
=\int_\Omega N_\varep (x,y) f(y)\, dy.
$$
Then $\mathcal{L}_\varep (u_\varep)=f$ in $\Omega$ and $\frac{\partial u_\varep}{\partial\nu_\varep}
=0$ on $\partial\Omega$. Since $\mathcal{L}_\varep (u_\varep)
=0$ in $B(x_0,\rho/2)\cap\Omega$, it follows 
from the boundary H\"older estimate (\ref{boundary-holder-estimate})
and interior estimates
that
\begin{equation}\label{5.2.1}
|u_\varep (x_0)-u_\varep (z_0)|
\le C
\left(\frac{|x_0-z_0|}{\rho}\right)^{\gamma}\cdot\rho
\cdot \left\{
\average_{B(x_0, \rho/2)\cap\Omega}
|\nabla u_\varep |^2\right\}^{1/2}.
\end{equation}
Let $E$ be the average of $u_\varep$ over $B(y_0,\rho/2)\cap\Omega$.
Note that by (\ref{ellipticity}),
\begin{equation}\label{5.2.2}
\aligned
\mu \int_\Omega |\nabla u_\varep|^2 dx \le
& \left| \int_\Omega f\cdot u_\varep\, dx \right|
=\left|\int_{B(y_0, \rho/2)\cap\Omega}
f\cdot (u_\varep -E)\, dx\right|\\
& \le \| f\|_{L^2(\Omega)}
\| u_\varep -E\|_{L^2(B(y_0,\rho/2)\cap\Omega)}\\
& \le C \rho  \| f\|_{L^2(\Omega)}
\| \nabla u_\varep\|_{L^2(B(y_0,\rho/2)\cap\Omega)},
\endaligned
\end{equation}
where we have used the Cauchy and Poincar\'e inequalities. 
Hence, $\|\nabla u_\varep\|_{L^2(\Omega)}
\le C\rho \|f\|_{L^2(\Omega)}$.
This, together with (\ref{5.2.1}), gives
$$
|u_\varep (x_0)-u_\varep (z_0)|
\le C\rho^{2-\frac{d}{2}} \left(\frac{|x_0-z_0|}{\rho}\right)^\gamma
\| f\|_{L^2(\Omega)}.
$$
By duality this implies that
\begin{equation}\label{5.2.3}
\left\{ \int_{B(y_0,\rho/2)\cap\Omega}
\big| W(y) -C_{x_0,z_0}\big|^2\, dy\right\}^{1/2}
\le C\rho^{2-\frac{d}{2}} 
\left(\frac{|x_0-z_0|}{\rho}\right)^\gamma,
\end{equation}
where $W(y)=N_\varep (x_0,y)-N_\varep (z_0,y)$ and $C_{x_0,z_0}$ 
is the average of $W$ 
over $B(y_0,\rho/2)\cap\Omega$.
In view of (\ref{Neumann-symmetry}) we have
 $(\mathcal{L}_\varep)^* (W^*)=0$ in $B(y_0, \rho/2)\cap\Omega$
and $\frac{\partial }{\partial \nu_\varep^*}\{ W^*\} =0$ on $\partial\Omega$, where
$\frac{\partial}{\partial\nu^*_\varep}$ denote the conormal derivative
associated with $(\mathcal{L}_\varep)^*$.
The estimate (\ref{5.2}) now follows from (\ref{5.2.3}) by
Cacciopoli's inequality (\ref{Cacciopoli}).
\end{proof}

\begin{lemma}\label{lemma-5.3}
Let $V_\varep(x,y)$ be defined by (\ref{definition-of-V}).
Suppose $d\ge 3$. Then for any $x,y\in \Omega$,
\begin{equation}\label{estimate-5.3}
|V_\varep (x,y)|\le {C}{\big[ \delta (x)\big]^{\frac{2-d}{2}}
\big[\delta(y)\big]^{\frac{2-d}{2}}},
\end{equation}
where $\delta (x)=\text{\rm dist}(x,\partial\Omega)$.
\end{lemma}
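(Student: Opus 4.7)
My plan is to combine a global energy estimate for $\nabla V_\varep(\cdot,y)$ on $\Omega$ with an interior sup-$L^{2^*}$ estimate (where $2^* = 2d/(d-2)$) for solutions of $\mathcal L_\varep u = 0$, relying critically on the sharp $H^{-1/2}$ bound for the Neumann data.

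First I set $g = \partial_{\nu_\varep}\Gamma_\varep^\beta(\cdot,y) + e^\beta/|\partial\Omega|$ and $c^* = |\partial\Omega|^{-1}\int_{\partial\Omega}\Gamma_\varep^\beta(\cdot,y)\,d\sigma$, so that (\ref{definition-of-V}) becomes the weak Neumann problem $\mathcal L_\varep V_\varep^\beta(\cdot,y) = 0$ in $\Omega$, $\partial_{\nu_\varep}V_\varep^\beta = g$ on $\partial\Omega$, with $V_\varep^\beta(\cdot,y)-c^*$ of zero boundary mean. Testing the weak formulation by $V_\varep^\beta - c^*$ and using ellipticity, Poincar\'e's inequality (with zero boundary mean), and the trace inequality gives $\|\nabla V_\varep(\cdot,y)\|_{L^2(\Omega)} \le C\|g\|_{H^{-1/2}(\partial\Omega)}$. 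I would then obtain the sharp bound $\|g\|_{H^{-1/2}(\partial\Omega)} \le C\delta(y)^{(2-d)/2}$ by dualizing the boundary Sobolev embedding $H^{1/2}(\partial\Omega)\hookrightarrow L^{2(d-1)/(d-2)}(\partial\Omega)$ (valid for $d\ge 3$), which reduces it to controlling $\|g\|_{L^{q'}(\partial\Omega)}$ with $q' = 2(d-1)/d$; using (\ref{fundamental-estimate-2}), a local parametrization of $\partial\Omega$ near the projection of $y$, and the rescaling $u = (x'-y')/\delta(y)$, a direct computation yields
\[
\int_{\partial\Omega}|x-y|^{(1-d)q'}\,d\sigma(x) \le C\delta(y)^{(d-1)(2-d)/d},
\]
from which the exponent $(2-d)/2$ pops out after taking the $q'$-th root.

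Next, Sobolev--Poincar\'e gives $\|V_\varep(\cdot,y)-c^*\|_{L^{2^*}(\Omega)} \le C\|\nabla V_\varep(\cdot,y)\|_{L^2(\Omega)} \le C\delta(y)^{(2-d)/2}$, while a parallel but easier boundary integration using (\ref{fundamental-estimate-1}) shows $|c^*|\le C$ uniformly in $y$. Since $\mathcal L_\varep(V_\varep(\cdot,y)-c^*) = 0$ in $\Omega$, Theorem \ref{interior-estimate-theorem} combined with Caccioppoli's inequality and H\"older's inequality produces the interior sup--$L^p$ estimate $\sup_{B(x,r/2)}|u| \le Cr^{-d/p}\|u\|_{L^p(B(x,r))}$ for any $p\ge 2$ and any solution of $\mathcal L_\varep u = 0$. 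Applying this with $p = 2^*$ (so that $d/p = (d-2)/2$) and $r = \delta(x)/2$ yields
\[
|V_\varep(x,y)-c^*| \le C\delta(x)^{(2-d)/2}\|V_\varep(\cdot,y)-c^*\|_{L^{2^*}(\Omega)} \le C\delta(x)^{(2-d)/2}\delta(y)^{(2-d)/2},
\]
and since $\delta(x),\delta(y) \le \mathrm{diam}(\Omega)$ makes $\delta(x)^{(2-d)/2}\delta(y)^{(2-d)/2}$ bounded below by a constant depending only on $\Omega$, the bounded constant $|c^*|$ can be absorbed into the right-hand side.

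The main obstacle is the sharp $H^{-1/2}$ estimate for $g$: the naive $L^2(\partial\Omega)$ bound gives only $\|g\|_{L^2(\partial\Omega)}\le C\delta(y)^{(1-d)/2}$, which after the interior step would produce a pointwise estimate weaker than the lemma's by a full power of each $\delta$. Achieving the correct exponent $(2-d)/2$ on $\delta(y)$ requires the dual Sobolev embedding on $\partial\Omega$, and symmetrically the Sobolev--Poincar\'e embedding $W^{1,2}\hookrightarrow L^{2^*}$ inside $\Omega$ is what delivers the matching exponent $(2-d)/2$ on $\delta(x)$ via interior sup-$L^{2^*}$; that both pieces align exactly is the content of the proof.
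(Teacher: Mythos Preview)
Your proof is correct and is essentially the paper's own argument: both bound $\|\nabla V_\varep(\cdot,y)\|_{L^2(\Omega)}$ by the $W^{-1/2,2}$ norm of the Neumann data, pass to the $L^{2(d-1)/d}(\partial\Omega)$ norm via the dual boundary Sobolev embedding, compute that norm from (\ref{fundamental-estimate-2}) to extract $\delta(y)^{(2-d)/2}$, and then combine an interior $L^\infty$--$L^{2^*}$ bound with Sobolev--Poincar\'e to produce the factor $\delta(x)^{(2-d)/2}$. The only cosmetic difference is that you subtract the constant $c^*$ before applying Sobolev--Poincar\'e, whereas the paper keeps the boundary-mean term explicit and bounds $\average_{\partial\Omega}\Gamma_\varep(\cdot,y)\,d\sigma$ directly via (\ref{fundamental-estimate-1}).
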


\begin{proof}
We begin by fixing $y\in\Omega$ and $1\le \beta\le m$. 
Let $u_\varep (x)=V_\varep (x,y)$.
In view of (\ref{definition-of-V}) we have
$$
\|\nabla u_\varep \|_{L^2(\Omega)}
\le C \|\frac{\partial u_\varep}{\partial\nu_\varep}\|_{W^{-1/2,2} (\partial\Omega)}
\le C \|\frac{\partial u_\varep}{\partial\nu_\varep}\|_{L^p(\partial\Omega)},
$$
where $p=\frac{2(d-1)}{d}$. Note that by (\ref{fundamental-estimate-2}),
$$
\aligned
\|\frac{\partial u_\varep}{\partial\nu_\varep}\|_{L^p(\partial\Omega)}
 & \le C \left\{ \int_{\partial\Omega}
\frac{d\sigma (x)}{|x-y|^{p(d-1)}}\right\}^{1/p}
+ C|\partial\Omega|^{\frac{1}{p}-1}\\
& \le C\big[\delta (y)\big]^{\frac{2-d}{2}}.
\endaligned
$$
Thus we have proved that
$$
\| \nabla u_\varep\|_{L^2(\Omega)}
\le C \big[ \delta (y)\big]^{\frac{2-d}{2}}.
$$

Now, by the interior estimates and the Sobolev inequality (\ref{Sobolev}),
$$
\aligned
|u_\varep (x)|
&\le C\left\{ \frac{1}{[\delta (x)]^d}
\int_{B(x,\delta(x)/2)}
|u_\varep (z)|^{2^*} dz \right\}^{1/2^*}\\
&\le C \big[ \delta (x)\big]^{\frac{2-d}{2}}
\left\{ \left(\int_\Omega |\nabla u_\varep|^2\, dx \right)^{1/2}
+|\Omega|^{\frac{1}{2^*}}
\big| \average_{\partial\Omega} u_\varep d\sigma\big| \right\}\\
&\le
C \big[ \delta (x)\big]^{\frac{2-d}{2}}
\left\{ \big[ \delta (y)\big]^{\frac{2-d}{2}}
+|\Omega|^{\frac{1}{2^*}}
\big| \average_{\partial\Omega} \Gamma_\varep (z,y) d\sigma (z)\big| \right\}\\
&\le
C \big[ \delta (x)\big]^{\frac{2-d}{2}}
\big[ \delta (y)\big]^{\frac{2-d}{2}},
\endaligned
$$
where $2^*=\frac{2d}{d-2}$.
\end{proof}

\begin{thm}\label{Neumann-function-theorem}
Let $\Omega$ be a bounded $C^{1,\alpha}$ domain in $\brd$, $d\ge 3$.
Suppose that $A\in \Lambda (\mu, \lambda, \tau)$.
Then
\begin{equation}\label{Neumann-size-estimate}
|N_\varep (x,y)|\le C |x-y|^{2-d}
\end{equation}
and for any $\gamma\in (0,1)$,
\begin{equation}\label{Neumann-holder-estimate}
\aligned
|N_\varep (x,y)-N_\varep (z,y)| & \le \frac{C_\gamma |x-z|^\gamma}{|x-y|^{d-2+\gamma}},\\
|N_\varep (y,x)-N_\varep (y,z)| & \le \frac{C_\gamma |x-z|^\gamma}{|x-y|^{d-2+\gamma}},
\endaligned
\end{equation}
where $|x-z|<(1/4)|x-y|$.
\end{thm}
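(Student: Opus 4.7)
The plan is to deduce the three inequalities in sequence: first the size bound \eqref{Neumann-size-estimate}, then the H\"older bound in the first variable, and finally the H\"older bound in the second variable via symmetry.

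For the size estimate, fix $x_0, y_0 \in \Omega$ and set $r = |x_0 - y_0|/8$. View $v(z) := N_\varep^\beta(z, y_0)$ as a function of its first argument: by \eqref{equation-for-N}, $v$ solves $\mathcal{L}_\varep v = 0$ in $B(x_0, r) \cap \Omega$ with $\partial v/\partial\nu_\varep = -e^\beta/|\partial\Omega|$ on $B(x_0, r)\cap\partial\Omega$. Applying Theorem \ref{boundary-holder-theorem} with $p=2$ reduces matters to bounding $\|v\|_{L^2(B(x_0, 2r)\cap\Omega)}$ by $C r^{2 - d/2}$ (the term $r \cdot |\partial\Omega|^{-1}$ is $O(r) = O(r^{2-d})$). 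This $L^2$ bound is obtained by duality: for a mean-zero test function $\phi \in L^2$ supported in $B(x_0, 2r) \cap \Omega$ with $\|\phi\|_{L^2} \le 1$, the pairing $\int_\Omega v \cdot \phi$ equals, up to index transposition via \eqref{Neumann-symmetry}, the value at $y_0$ of the solution $w$ to the adjoint Neumann problem with source $\phi$ and zero boundary flux. Since $\mathcal{L}_\varep^*w = 0$ in $B(y_0, r)$, an application of Theorem \ref{boundary-holder-theorem} (or the interior estimate) to $w$ in $B(y_0, r/2)$ bounds $|w(y_0)|$ by $Cr^{-d/2}\|w\|_{L^2(B(y_0, r)\cap\Omega)}$; this in turn is controlled using the Sobolev inequality \eqref{Sobolev} (applicable because $\int_{\partial\Omega}w=0$) together with the energy estimate $\|\nabla w\|_{L^2(\Omega)} \le C \|\phi\|_{L^{2d/(d+2)}} \le Cr$. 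Combining these yields $|w(y_0)| \le C r^{2 - d/2}$, hence the desired $L^2$ bound, and therefore $|N_\varep(x_0, y_0)| \le C |x_0 - y_0|^{2-d}$.

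For the H\"older estimate in the first variable, set $\rho = |x - y|$. Apply the local H\"older bound \eqref{local-holder-estimate} to $v(z) := N_\varep(z, y)$ on $B(x, \rho/2)$: the hypotheses of Theorem \ref{boundary-holder-theorem} are met with Neumann data $g = -e^\beta/|\partial\Omega|$ of size $O(1)$. Using the size estimate from the previous step to bound $\bigl(\average_{B(x, \rho)\cap\Omega} |v|^2\bigr)^{1/2} \le C \rho^{2-d}$, and observing that the contribution of $\rho \|g\|_\infty$ is of lower order, we obtain
\[
|N_\varep(x,y) - N_\varep(z,y)| \le C \bigl(|x-z|/\rho\bigr)^\gamma \rho^{2-d},
\]
which is the first displayed H\"older estimate.

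For the second H\"older estimate, invoke the symmetry relation \eqref{Neumann-symmetry}: $N_{A,\varep}^{\alpha\beta}(y,\cdot) = N_{A^*,\varep}^{\beta\alpha}(\cdot,y)$. Since $A^* \in \Lambda(\mu,\lambda,\tau)$ whenever $A$ is, applying the first H\"older estimate to the adjoint system $\mathcal{L}_\varep^*$ yields
\[
|N_\varep(y,x) - N_\varep(y,z)| = |N_{A^*,\varep}(x,y) - N_{A^*,\varep}(z,y)| \le C|x-z|^\gamma |x-y|^{2-d-\gamma},
\]
as claimed. The main obstacle is the bookkeeping in the first step: the duality argument naturally produces only $\|v - c\|_{L^2}$ for the constant $c = \average_{B(x_0,2r)\cap\Omega} v$, and extracting an absolute $L^2$ bound requires controlling $c$. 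This is done by combining Lemma \ref{lemma-5.3} (which gives a quantitative bound on $V_\varep$ at points well inside $\Omega$, while $\Gamma_\varep$ is directly bounded by \eqref{fundamental-estimate-1}) with the normalization $\int_{\partial\Omega} N_\varep(\cdot, y_0)\, d\sigma = 0$ and a Poincar\'e-type inequality relating averages over $B(x_0, 2r)\cap\Omega$ and over $\partial\Omega$.
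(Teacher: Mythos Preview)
Your treatment of the H\"older estimates \eqref{Neumann-holder-estimate} is correct and matches the paper: once the size bound \eqref{Neumann-size-estimate} is known, Theorem~\ref{boundary-holder-theorem} applied to $N_\varep(\cdot,y)$ (respectively, via \eqref{Neumann-symmetry}, to $N_{A^*,\varep}(\cdot,y)$) gives both H\"older inequalities. The paper dispatches this in one sentence.

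The gap is in your derivation of the size estimate. Your duality argument with mean-zero $\phi$ is sound and yields $\|v-\bar v_B\|_{L^2(B)}\le Cr^{2-d/2}$, where $B=B(x_0,2r)\cap\Omega$; applying Theorem~\ref{boundary-holder-theorem} to $v-\bar v_B$ then gives $|v(x_0)-\bar v_B|\le Cr^{2-d}$, which is an oscillation bound, not a size bound. The obstacle you flag---controlling $\bar v_B$---is genuine, and the fix you sketch does not close it. A Poincar\'e-type comparison between $\bar v_B$ and $\average_{\partial\Omega}v=0$ would require gradient control of $v$ along a chain connecting $B$ to $\partial\Omega$, but $v=N_\varep(\cdot,y_0)$ is singular at $y_0$ and no such control is available. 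And invoking Lemma~\ref{lemma-5.3} directly on $\bar v_B$ produces a bound containing $[\delta(y_0)]^{(2-d)/2}$, which is not dominated by $|x_0-y_0|^{2-d}$.

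The paper avoids duality here entirely. From Lemma~\ref{lemma-5.3} and \eqref{fundamental-estimate-1} one has the crude pointwise bound
\[
|N_\varep(x,y)|\le C\big\{|x-y|^{2-d}+[\delta(x)]^{2-d}+[\delta(y)]^{2-d}\big\}.
\]
The key move is to apply Theorem~\ref{boundary-holder-theorem} with an exponent $p$ so small that $p(d-2)<1$; then $[\delta(z)]^{2-d}\in L^p_{\rm loc}$ near $\partial\Omega$, and one computes $\big(\average_{B(x,\rho/4)\cap\Omega}[\delta(z)]^{p(2-d)}\,dz\big)^{1/p}\le C\rho^{2-d}$. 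One pass in the $x$-variable removes the $[\delta(x)]^{2-d}$ term, yielding $|N_\varep(x,y)|\le C\{|x-y|^{2-d}+[\delta(y)]^{2-d}\}$; a second pass in the $y$-variable (using \eqref{Neumann-symmetry}) removes $[\delta(y)]^{2-d}$. Your choice $p=2$ cannot execute this bootstrap because $[\delta(\cdot)]^{2-d}\notin L^2_{\rm loc}$ for $d\ge 3$.
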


\begin{proof}
By Theorem \ref{boundary-holder-theorem} 
we only need to establish the size estimate (\ref{Neumann-size-estimate}).
To this end we first note that by Lemma \ref{lemma-5.3},
\begin{equation}\label{5.4.1}
|N_\varep (x,y)|\le C\big\{ |x-y|^{2-d}
+\big[\delta(x)\big]^{2-d}
+\big[\delta(y)\big]^{2-d}\big\}.
\end{equation}
Next, let $\rho=|x-y|$. It follows from Theorem \ref{boundary-holder-theorem}
 and (\ref{5.4.1}) that
\begin{equation}\label{5.4.2}
\aligned
|N_\varep (x,y)|
 & \le C \left\{\left\{
\average_{B(x,\rho/4)\cap\Omega} |N_\varep (z,y)|^p \, dz\right\}^{1/p}
+\frac{\rho}{|\partial\Omega|}\right\}\\
& \le C
\big\{
|x-y|^{2-d}
+\big[\delta (y)\big]^{2-d}\big\},
\endaligned
\end{equation}
where we have chosen $p$ so that $p(d-2)<1$.
With estimate (\ref{5.4.2}) at our disposal,
another application of Theorem \ref{boundary-holder-theorem} gives
$$
\aligned
|N_\varep (x,y)|
&\le C\left\{ \left\{\average_{B(y,\rho/4)\cap \Omega}
|N_\varep (x,z)|^p\, dz\right\}^{1/p}
+\rho^{2-d}\right\}\\
&\le C|x-y|^{2-d}.
\endaligned
$$
This finishes the proof.
\end{proof}

\begin{remark}\label{remark-5.0}
{\rm 
If $m=1$ and $d\ge 3$, the size estimate (\ref{Neumann-size-estimate}) and H\"older estimate
(\ref{Neumann-holder-estimate})
for some $\gamma>0$ were established in \cite{Kenig-Pipher-1993}
for divergence form elliptic operators with bounded measurable coefficients in 
bounded star-like Lipschitz domains.
}
\end{remark}

\begin{remark}\label{remark-5.2} 
{\rm 
Suppose that $d\ge 3$.
The matrix of Neumann functions for the exterior domain $\Omega_-=\br^d\setminus \overline{\Omega}$
may be constructed in a similar fashion. Indeed, let $N_\varep^- (x,y)
=\Gamma_\varep (x,y)-V_\varep^-(x,y)$, where $V_\varep^-(x,y)$ is chosen so that
for each $y\in \Omega_-$,
\begin{equation}\label{5.6.1}
\left\{
\aligned
\mathcal{L}_\varep \big\{ N_\varep^- (\cdot, y)\big\} & =\delta_y (x) I \quad \text{ in }\Omega,\\
\frac{\partial}{\partial \nu_\varep} \big\{ N_\varep^- (\cdot, y)\big\}
&=0 \quad \text{ on }\partial\Omega,\\
N_\varep^- (x,y) & =O(|x-y|^{2-d}) \quad \text{ as } |x|\to\infty,
\endaligned
\right.
\end{equation}
where $I$ is the $m\times m$ identity matrix.
The estimates in Theorem \ref{Neumann-function-theorem} continue to hold
for $N_\varep^-(x,y)$.
}
\end{remark}

\begin{remark}\label{remark-5.1}
{\rm 
If $d=2$, the matrix of Neumann functions may be defined as follows.
Choose $B(0,R)$ such that $\Omega\subset B(0,R/2)$. Let $G_\varep (x,y)$ be
the Green's function for $\mathcal{L}_\varep$ in $ B(0,R)$.
Define $N_\varep (x,y)=G_\varep (x,y)-V_\varep (x,y)$, where $V_\varep (x,y)$ is the solution
to (\ref{definition-of-V}), but with $\Gamma_\varep (x,y)$ replaced by $G_\varep (x,y)$.
Theorem \ref{Neumann-theorem-5.2} continues to hold for $d=2$.
One may modify the argument in the proof of Lemma \ref{lemma-5.3}
to show that
$$
|V_\varep (x,y)|\le C_\gamma \big[\delta(x)\big]^{-\gamma} \big[\delta(y)\big]^{-\gamma},
$$
for any $\gamma>0$. In view of the proof of Theorem \ref{Neumann-function-theorem}
and the estimate 
$|G_\varep (x,y)|\le C \big\{ 1+\big|\ln |x-y|\big|\big\}$ in \cite{AL-1987}, 
this gives
$
|N_\varep (x,y)|\le C_\gamma|x-y|^{-\gamma}$
for any $\gamma>0$. 
}
\end{remark}

\section{Correctors for Neumann boundary conditions}

Let $\Phi_\varep =(\Phi_{\varep,j}^{\alpha\beta})$, 
where for each $1\le j\le d$ and $1\le \beta\le m$,
$\Phi_{\varep,j}^\beta =(\Phi_{\varep,j}^{1\beta}, \dots, \Phi_{\varep, j}^{m \beta})$ 
 is a solution to the Neumann problem
\begin{equation}\label{Phi}
\left\{
\aligned
\mathcal{L}_\varep \big( \Phi_{\varep,j}^\beta) & =0 &\quad & \text{ in } \Omega,\\
\frac{\partial}{\partial\nu_\varep}
\big(\Phi_{\varep,j}^\beta\big) & =\frac{\partial}{\partial \nu_0} \big( P_j^\beta\big) & \quad
&\text{ on } \partial \Omega,\\
\endaligned
\right.
\end{equation}
Here $P_j^\beta =P_j^\beta (x)=x_j (0,\dots, 1, \dots, 0)$ with $1$ in the $\beta^{th}$
position.
In the study of boundary estimates for Neumann boundary conditions,
 the function $\Phi_\varep (x)-x$
plays a similar role as $\varep \chi (\frac{x}{\varep})$ for interior 
estimates. The goal of this section is to prove the following uniform Lipschitz estimate of
$\Phi_\varep$.

\begin{thm}\label{corrector-theorem}
Let $\Omega$ be a $C^{1,\alpha_0}$ domain.
Suppose that $A\in \Lambda (\mu,\lambda, \tau)$ and $A^*=A$.
Then
\begin{equation}\label{corrector-estimate}
\|\nabla \Phi_\varep\|_{L^\infty(\Omega)} \le C,
\end{equation}
where $C$ depends only on $d$, $m$, $\mu$, $\lambda$, $\tau$ and $\Omega$.
\end{thm}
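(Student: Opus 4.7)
\medskip
\noindent\textbf{Proof proposal.}
The plan is to split the bound according to the scale $\delta(x)=\mathrm{dist}(x,\partial\Omega)$, proving $|\nabla\Phi_\varep(x)|\le C$ first on $\{\delta(x)>\varep\}$ via a two-scale error estimate, and handling $\{\delta(x)\le\varep\}$ by a blow-up reduction to the $\varep=1$ theory. Fix $1\le j\le d$ and $1\le\beta\le m$ and, abusing notation, write
\[
w_\varep(x)=\Phi_{\varep,j}^\beta(x)-P_j^\beta(x)-\varep\,\chi_j^\beta(x/\varep).
\]
By the cell problem (\ref{corrector-equation}) we have $\mathcal{L}_\varep(P_j^\beta+\varep\chi_j^\beta(\cdot/\varep))=0$ in $\br^d$, so $\mathcal{L}_\varep(w_\varep)=0$ in $\Omega$. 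Using the standard dual (flux) correctors $\phi_{kij}^{\alpha\beta}\in L^\infty$, antisymmetric in $(k,i)$, satisfying $\partial_{y_k}\phi_{kij}^{\alpha\beta}=a_{ij}^{\alpha\beta}+a_{i\ell}^{\alpha\gamma}\partial_{y_\ell}\chi_j^{\gamma\beta}-\hat a_{ij}^{\alpha\beta}$, the boundary datum rewrites as
\[
\tfrac{\partial w_\varep}{\partial\nu_\varep}(y)=\varep\,n_i(y)\,\partial_{x_k}\!\bigl\{\phi_{kij}^{\alpha\beta}(y/\varep)\bigr\}=\varep\,\partial_{\tan,y}\!\bigl(G_\varep(y)\bigr),
\]
where the antisymmetry lets us convert $n_i\partial_k$ into pure tangential derivatives and $\|G_\varep\|_{L^\infty(\partial\Omega)}\le C$.

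\medskip
Next I would use the Neumann function $N_\varep$ of Section 5. Since $w_\varep$ has zero-mean Neumann data after a trivial normalization,
\[
w_\varep(x)-w_\varep(z)=\int_{\partial\Omega}\bigl[N_\varep(x,y)-N_\varep(z,y)\bigr]\,\tfrac{\partial w_\varep}{\partial\nu_\varep}(y)\,d\sigma(y),
\]
and tangential integration by parts on $\partial\Omega$ (valid in the $C^{1,\alpha_0}$ setting) yields
\[
|w_\varep(x)-w_\varep(z)|\le C\varep\int_{\partial\Omega}\bigl|\nabla_y\{N_\varep(x,y)-N_\varep(z,y)\}\bigr|\,d\sigma(y).
\]
The central claim, which I would establish next, is the uniform bound
\begin{equation}\label{key-N-estimate}
\int_{\partial\Omega}\bigl|\nabla_y\{N_\varep(x,y)-N_\varep(z,y)\}\bigr|\,d\sigma(y)\le C\qquad\text{whenever }|x-z|\le c\,\delta(x).
\end{equation}
Granting \eqref{key-N-estimate}, we get $|w_\varep(x)-w_\varep(z)|\le C\varep$ for all $z\in B(x,c\delta(x))$; the interior gradient estimate (Theorem \ref{interior-estimate-theorem}) applied to $w_\varep$ then gives $|\nabla w_\varep(x)|\le C\varep/\delta(x)$, and since $\|\nabla\chi\|_{L^\infty}\le C$ (Theorem \ref{interior-estimate-theorem} on the torus), this proves $|\nabla\Phi_\varep(x)|\le C$ whenever $\delta(x)>\varep$.

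\medskip
The main obstacle is the $L^1$-bound \eqref{key-N-estimate}. Let $\rho=|x-y|$ and decompose $\partial\Omega$ dyadically: $A_k=\{y\in\partial\Omega:2^k\delta(x)\le |y-x|<2^{k+1}\delta(x)\}$, together with the region near $x$ and $z$ where the size bounds $|\nabla_y N_\varep(x,y)|\le C|x-y|^{1-d}$ (derivable from Theorems \ref{Neumann-function-theorem} and \ref{interior-estimate-theorem}) are used directly and integrate to a finite quantity by $|x-z|\le c\delta(x)$. On each $A_k$, the function $W(y):=N_\varep(x,y)-N_\varep(z,y)$ solves $(\mathcal{L}_\varep)^*W=0$ locally in $y$ and has vanishing conormal derivative on $\partial\Omega$ (the constant data of the two pieces cancel). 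Here the symmetry hypothesis $A^*=A$ enters: by the Rellich estimates of \cite{Kenig-Shen-1,Kenig-Shen-2} applied in a thin layer near $A_k$,
\[
\|\nabla W\|_{L^2(A_k)}\le C\,\|\nabla W\|_{L^2(T_k)}/\sqrt{2^k\delta(x)},
\]
where $T_k$ is a thin boundary tube of width $\sim 2^k\delta(x)$ around $A_k$; the interior $L^2$-bound of Theorem \ref{Neumann-theorem-5.2} then dominates the right-hand side by $C(2^k\delta(x))^{1-d}(2^{-k})^{\gamma}$ with any $\gamma\in(0,1)$. Cauchy--Schwarz converts $L^2$ to $L^1$ on $A_k$ at the cost of $|A_k|^{1/2}\le C(2^k\delta(x))^{(d-1)/2}$, giving a telescoping series summing to $C$. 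This is the step I expect to be the hardest: one must patch the Rellich estimates (originally formulated on the whole boundary) into a local statement on dyadic annuli, using boundary H\"older estimates (Theorem \ref{boundary-holder-theorem}) to handle the mean-value normalizations in the Rellich identity.

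\medskip
Finally, for the near-boundary range $\delta(x)\le\varep$, I would argue by contradiction and blow-up. If $\|\nabla\Phi_\varep\|_{L^\infty}$ were unbounded, choose $\varep_k\to 0$, $x_k\in\Omega$ with $\delta(x_k)\le\varep_k$ and a boundary point $Q_k\in\partial\Omega$ at distance $\delta(x_k)$ from $x_k$. Rescale $u_k(y):=\varep_k^{-1}\bigl[\Phi_{\varep_k,j}^\beta(Q_k+\varep_k y)-c_k\bigr]$, so that $\mathcal{L}_1(u_k)=0$ in a fixed-radius domain $D(R,\psi_k)$ and $\partial u_k/\partial\nu_1$ on $\Delta(R,\psi_k)$ has $C^{\alpha_0}$-norm bounded independently of $k$ (using the already-proved bound $|\nabla\Phi_\varep|\le C$ at distance $\ge\varep$ from $\partial\Omega$ to bootstrap control of $u_k$ via the boundary H\"older estimate of Theorem \ref{boundary-holder-theorem-local}). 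Theorem \ref{compactness-theorem} extracts a subsequential limit $u_0$ solving a constant-coefficient Neumann problem with $C^{\alpha_0}$ data, to which classical Schauder theory applies; this contradicts $|\nabla u_k|\to\infty$ at $y=(x_k-Q_k)/\varep_k$. Combining the two cases gives \eqref{corrector-estimate}.
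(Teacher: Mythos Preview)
Your strategy for the region $\{\delta(x)>\varep\}$ is exactly the paper's: define $w_\varep=\Phi_\varep-xI-\varep\chi(x/\varep)$, rewrite $\partial w_\varep/\partial\nu_\varep$ as tangential derivatives of an $O(\varep)$ quantity via the flux correctors, represent $w_\varep(x)-w_\varep(z)$ through the Neumann function, integrate by parts on $\partial\Omega$, and prove the key $L^1$ bound \eqref{key-N-estimate} by localized Rellich estimates combined with Theorem \ref{Neumann-theorem-5.2}. Your dyadic-annulus plan for \eqref{key-N-estimate} is the same argument the paper carries out in Lemma \ref{6.2-lemma} (equations (6.2.3)--(6.2.6)), so this part is fine.

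The gap is in your near-boundary step. Your contradiction/compactness scheme needs a uniform bound on $u_k$ (at least in $L^2$) over $D(R,\psi_k)$, and the sentence ``using the already-proved bound $|\nabla\Phi_\varep|\le C$ at distance $\ge\varep$ \dots\ to bootstrap control of $u_k$ via the boundary H\"older estimate'' does not deliver this: Theorem \ref{boundary-holder-theorem-local} requires as input exactly the $L^2$ norm you are trying to control, and the boundary H\"older estimate applied at unit scale only gives $|\Phi_\varep(x)-\Phi_\varep(Q_k)|\le C|x-Q_k|^\gamma$, which after rescaling yields $\|u_k\|_\infty\le C\varep_k^{\gamma-1}\to\infty$. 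One can rescue this with some work (integrating $|\nabla\Phi_\varep|\le C+C\varep/\delta$ gives a logarithmic bound $|\Phi_\varep-c_k|\le C\varep(1+|\log(\delta/\varep)|)$, which is square-integrable at the right scale), but you have not said this, and invoking Theorem \ref{compactness-theorem} is in any case misplaced since after blow-up the operator is $\mathcal{L}_1$, not $\mathcal{L}_{\varep_k}$ with $\varep_k\to 0$.

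The paper's near-boundary argument is shorter and avoids this difficulty entirely. Set $w(y)=\varep^{-1}\Phi_{\varep,j}^\beta(\varep y)$, so $\mathcal{L}_1(w)=0$ with $C^{\alpha_0}$ Neumann data, and apply classical regularity in the form
\[
\|\nabla\Phi_\varep\|_{L^\infty(B(Q,\varep)\cap\Omega)}\le C+C\Bigl(\varep^{-d}\int_{B(Q,2\varep)\cap\Omega}|\nabla\Phi_\varep|^p\,dx\Bigr)^{1/p}
\]
for \emph{any} $p>0$ (the case $p<2$ following from $p=2$ by the usual convexity/self-improvement trick). The point is to take $p<1$: then the already-established pointwise bound $|\nabla\Phi_\varep|\le C+C\varep/\delta$ is $L^p$-integrable over the $\varep$-ball, since $\int_0^{2\varep}(\varep/t)^p\,dt<\infty$, and the right-hand side is $\le C$. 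No compactness, no contradiction, no bootstrap.
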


Our proof of Theorem \ref{corrector-theorem} uses the uniform $L^2$ Rellich
estimate for Neumann problem:
\begin{equation}\label{Rellich-estimate}
\int_{\partial\Omega} |\nabla u_\varep|^2\, d\sigma
\le C\int_{\partial\Omega}
\big|\frac{\partial u_\varep}{\partial \nu_\varep}\big|^2\, d\sigma,
\end{equation}
for solutions of $\mathcal{L}_\varep (u_\varep)=0$ in $\Omega$.
We mention that  (\ref{Rellich-estimate}) as well as the uniform $L^2$ Rellich estimate
for the regularity of Dirichlet problem:
\begin{equation}\label{Rellich-estimate-1}
\int_{\partial\Omega} |\nabla u_\varep|^2\, d\sigma
\le C\int_{\partial\Omega}
\big|\nabla_{\rm tan} u_\varep\big|^2\, d\sigma,
\end{equation}
was established by Kenig and Shen
in \cite{Kenig-Shen-2} under the assumption that $\Omega$ is Lipschitz,
$A\in \Lambda (\mu, \lambda,\tau)$
and $A=A^*$ (also see \cite{Kenig-Shen-1} for the case of the elliptic equation).
The constant $C$ in (\ref{Rellich-estimate})-(\ref{Rellich-estimate-1})
depends only on $d$, $m$,
$\mu$, $\lambda$, $\tau$ and the Lipschitz character of $\Omega$.

\begin{lemma}\label{6.2-lemma}
Let $\Omega$ and $\mathcal{L}$ satisfy the same assumptions as in Theorem \ref{corrector-theorem}.
Suppose that $\mathcal{L}_\varep (u_\varep)=0$ in $\Omega$,
$\frac{\partial u_\varep}{\partial\nu_\varep}=g$ on $\partial\Omega$, and
$$
g=\sum_{i,j}
\left( n_i \frac{\partial}{\partial x_j}-
n_j\frac{\partial}{\partial x_i}\right) g_{ij},
$$
where $g_{ij}\in C^1(\partial\Omega)$ and
$n=(n_1, \dots, n_d)$ denotes the unit outward normal to $\partial\Omega$.
Then
\begin{equation}\label{6.2.1}
|\nabla u_\varep (x)|
\le \frac{C}{\delta (x)}
\sum_{i,j} \| g_{ij}\|_{L^\infty (\partial \Omega)},
\end{equation}
for any $x\in \Omega$, where $\delta(x)=\text{\rm dist}(x,\partial\Omega)$.
\end{lemma}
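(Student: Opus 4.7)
The plan is to represent $u_\varep$ via the matrix of Neumann functions $N_\varep(x,y)$ constructed in Section 5, exploit the tangential-derivative structure of $g$ by integration by parts on the closed hypersurface $\partial\Omega$, and then combine a uniform bound on $\int_{\partial\Omega}|\nabla_y N_\varep|\,d\sigma$ with the interior Lipschitz estimate of Theorem \ref{interior-estimate-theorem}. Because each $X_{ij}=n_i\partial_j-n_j\partial_i$ is a tangential differential operator on a surface without boundary, one has $\int_{\partial\Omega} g\,d\sigma=0$, so $u_\varep$ admits the representation
\begin{equation*}
u_\varep(x)=\int_{\partial\Omega} N_\varep(x,y)\,g(y)\,d\sigma(y)+\text{const},
\end{equation*}
and tangential integration by parts on $\partial\Omega$ (no boundary terms, as $\partial\Omega$ is closed) yields, for any $x,z\in\Omega$,
\begin{equation*}
u_\varep(x)-u_\varep(z)=-\sum_{i,j}\int_{\partial\Omega} g_{ij}(y)\,(n_i\partial_{y_j}-n_j\partial_{y_i})\bigl[N_\varep(x,y)-N_\varep(z,y)\bigr]\,d\sigma(y).
\end{equation*}

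Fix $x_0\in\Omega$ and write $\delta=\delta(x_0)$. The preceding identity reduces Lemma \ref{6.2-lemma} to the uniform bound
\begin{equation*}
(\ast)\qquad \int_{\partial\Omega}\bigl|\nabla_y\{N_\varep(x,y)-N_\varep(x_0,y)\}\bigr|\,d\sigma(y)\le C\qquad\text{whenever }|x-x_0|\le \delta/4,
\end{equation*}
for granted $(\ast)$ we obtain $|u_\varep(x)-u_\varep(x_0)|\le C\sum_{i,j}\|g_{ij}\|_{L^\infty(\partial\Omega)}$ on $B(x_0,\delta/4)$, and the interior estimate of Theorem \ref{interior-estimate-theorem} combined with Caccioppoli's inequality then gives $|\nabla u_\varep(x_0)|\le C\delta^{-1}\sum_{i,j}\|g_{ij}\|_{L^\infty(\partial\Omega)}$, as required. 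To establish $(\ast)$, observe that $W(y)=N_\varep(x,y)-N_\varep(x_0,y)$ solves $(\mathcal{L}_\varep)^*W=0$ in $\Omega\setminus\{x,x_0\}$ with $\partial W/\partial\nu_\varep^*=0$ on $\partial\Omega$, since the normalizing constants in (\ref{equation-for-N}) cancel in the difference. I would then dyadically split $\partial\Omega$ into an inner piece $\partial\Omega\cap B(Q_0,C\delta)$ about the nearest boundary point $Q_0$ to $x_0$ and outer annular pieces $A_k=\partial\Omega\cap\{2^k\delta<|y-x_0|\le 2^{k+1}\delta\}$, $k\ge k_0$. On each $A_k$ the interior $L^2$ estimate of Theorem \ref{Neumann-theorem-5.2} controls $\|\nabla_y W\|_{L^2}$ on a boundary collar, with a H\"older decay factor $(|x-x_0|/(2^k\delta))^\gamma\le 2^{-k\gamma}$; the Rellich inequality (\ref{Rellich-estimate}), applied after cutting off near the singularities so that $W$ solves a homogeneous system with zero conormal data, transfers this to an $L^2$ bound on $A_k$ itself, and Cauchy--Schwarz followed by the geometric sum over $k$ produces the uniform $L^1$ bound. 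The inner piece is handled analogously, using the size and H\"older estimates of Theorem \ref{Neumann-function-theorem} for $N_\varep$ together with Caccioppoli and Rellich on a boundary slab of scale $\delta$.

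The principal obstacle is the integral estimate $(\ast)$, and in particular the Rellich step that converts the interior $L^2$ control of $\nabla_y W$ supplied by Theorem \ref{Neumann-theorem-5.2} into an $L^2$ bound along $\partial\Omega$. It is precisely this step that forces the symmetry hypothesis $A^*=A$ in Theorem \ref{corrector-theorem}; the rest of the argument---the representation formula, the tangential integration by parts, the dyadic decomposition, and the interior gradient estimate---uses no symmetry. The convergence of the geometric sum in the outer region hinges on the H\"older exponent $\gamma$ in Theorem \ref{Neumann-theorem-5.2}, which supplies the scale-by-scale cancellation needed to absorb the growth of $|A_k|$.
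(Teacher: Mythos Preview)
Your approach is essentially the paper's: both reduce to $(\ast)$ via the Neumann-function representation and tangential integration by parts, then prove $(\ast)$ by combining the Rellich estimate with Theorem \ref{Neumann-theorem-5.2} on a local decomposition of $\partial\Omega$. One clarification on the Rellich step: the paper does not literally cut off $W$ (which would destroy the equation) but instead applies Rellich on a one-parameter family of slab subdomains $S(t)$ avoiding the singularities; the conormal derivative of $W$ vanishes only on $\partial S(t)\cap\partial\Omega$, and the surface integral over the remaining artificial boundary $\partial S(t)\setminus\partial\Omega$ is converted, by integrating in $t$, into exactly the volume integral that Theorem \ref{Neumann-theorem-5.2} controls.
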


\begin{proof}
By the interior estimate (\ref{interior-estimate}) we only need to show that
\begin{equation}\label{6.2.2-1}
|u_\varep (x)-u_\varep (z)|
\le C \sum_{i,j} \| g_{ij}\|_{L^\infty (\partial \Omega)},
\end{equation}
where $|x-z|\le cr$ and $r=\delta (x)$.
Let $N_\varep (x,y)$ denote the matrix of Neumann functions for $\mathcal{L}_\varep$
on $\Omega$.
Note that
$$
\aligned
u_\varep (x)-u_\varep (z)
&=\int_{\partial\Omega}
\big\{ N_\varep (x,y)-N_\varep (z,y)\big\} g(y)\, d\sigma(y)\\
&=
\int_{\partial\Omega}
\big\{ N_\varep (x,y)-N_\varep (z,y)\big\} 
\sum_{i,j}
\left( n_i \frac{\partial}{\partial y_j}-
n_j\frac{\partial}{\partial y_i}\right) g_{ij} (y)
\, d\sigma (y)\\
& =-\sum_{i,j}
\int_{\partial\Omega}
\left( n_i \frac{\partial}{\partial y_j}-
n_j\frac{\partial}{\partial y_i}\right)
\big\{ N_\varep (x,y)-N_\varep (z,y)\big\} 
\cdot g_{ij}(y)\, d\sigma (y),
\endaligned
$$
where we have used the fact that $n_i\frac{\partial}{\partial y_j}
-n_j\frac{\partial}{\partial y_i}$ is a tangential derivative
on $\partial\Omega$.
Consequently it suffices to show that
\begin{equation}\label{6.2.2}
\int_{\partial\Omega}
\big| \nabla _y \big\{ N_\varep (x,y)-N_\varep (z,y)\big\} \big|\,
d\sigma (y) \le C,
\end{equation}
if $|x-z|\le cr$ and $r=\delta(x)$.

Let $Q\in \partial\Omega$ so that $|x-Q|=\text{\rm dist}(x, \partial\Omega)$.
By translation and rotation we may assume that $Q=0$ and
$$
\aligned
&\Omega\cap \{ (x^\prime, x_d): \ |x^\prime|<8cr \text{ and } |x_d|<8cr\}\\
&\quad=\big\{ (x^\prime, x_d): \ |x^\prime|<8cr \text{ and } 
\psi(x^\prime)<x_d <8cr\}
\endaligned
$$
where $\psi(0)=|\nabla \psi (0)|=0$ and $c$ is sufficiently small.
To establish (\ref{6.2.2}) we will show that 
\begin{equation}\label{6.2.3}
\int_{|y|\le cr}
\big| \nabla _y \big\{ N_\varep (x,y)-N_\varep (z,y)\big\} \big|\,
d\sigma (y) \le C,
\end{equation}
and there exists $\beta>0$ such that for $cr<\rho<r_0 $,
\begin{equation}\label{6.2.4}
\int_{ |y-P|\le c\rho}
\big| \nabla _y \big\{ N_\varep (x,y)-N_\varep (z,y)\big\} \big|\,
d\sigma (y) \le C\left(\frac{r}{\rho}\right)^\beta,
\end{equation}
where $P\in\partial\Omega$ and $|P|=\rho$.
The estimate (\ref{6.2.2})
follows from (\ref{6.2.3}) and (\ref{6.2.4})
by a simple covering argument.

To see (\ref{6.2.3}) we let 
$$
S(t)=\big\{ (x^\prime, x_d): \ |x^\prime|<t \text{ and } \psi(x^\prime)
<x_d< \psi(x^\prime) +ct\big\}.
$$
Note that by Cauchy inequality, for $t\in (cr,2cr)$,
\begin{equation}\label{6.2.5}
\aligned
& \left\{ \int_{|y|\le cr}
\big| \nabla _y \big\{ N_\varep (x,y)-N_\varep (z,y)\big\} \big|\,
d\sigma (y)\right\}^2\\
&\qquad
\le Cr^{d-1}
 \int_{\partial S(t)}
\left|
\nabla_y
\big\{ N_\varep (x,y)-N_\varep (z,y)\big\} \right|^2\, d\sigma (y)\\
&\qquad
\le Cr^{d-1}
\int_{\partial S(t)}
\left|
\frac{\partial }{\partial \nu^*_\varep}
\big\{ N_\varep (x,y)-N_\varep (z,y)\big\} \right|^2\, d\sigma (y),
\endaligned
\end{equation}
where  we have used the Rellich estimate (\ref{Rellich-estimate})
for the last inequality.
Since
$$
\frac{\partial }{\partial \nu^*_\varep (y)}
\big\{ N_\varep (x,y)-N_\varep (z,y)\big\} =0 \quad \text{ in }\partial\Omega,
$$
we may integrate both sides of (\ref{6.2.5}) in $t$ over $(cr, 2cr)$ to obtain
\begin{equation}\label{6.2.6}
\aligned
& \left\{ \int_{|y|\le cr}
\big| \nabla _y \big\{ N_\varep (x,y)-N_\varep (z,y)\big\} \big|\,
d\sigma (y)\right\}^2 \\
&\qquad
\le Cr^{d-2}
 \int_{ S(2cr)}
\left|
\nabla_y
\big\{ N_\varep (x,y)-N_\varep (z,y)\big\} \right|^2\, dy.
\endaligned
\end{equation}
The desired estimate (\ref{6.2.3}) now follows from estimate
(\ref{5.2}).

The proof of (\ref{6.2.4}) is similar to that of (\ref{6.2.3}).
Indeed, an analogous argument gives
$$
\aligned
& \left\{ \int_{|y-P|\le c\rho}
\left|
\nabla_y
\big\{ N_\varep (x,y)-N_\varep (z,y)\big\} \right|^2\, d\sigma(y)\right\}^2 \\
&\qquad
\le C\rho^{d-2}
\int_{|y-P|\le 2c\rho}
\left|
\nabla_y
\big\{ N_\varep (x,y)-N_\varep (z,y)\big\} \right|^2\, dy\\
&\qquad
\le C \left(\frac{r}{\rho}\right)^{2\gamma}.
\endaligned
$$
This completes the proof.
\end{proof}

Let $\Psi_\varep =\big(\Psi_{\varep,j}^{\alpha\beta} (x)\big)$, 
where $1\le j\le d$, $1\le\alpha, \beta\le m$ and
\begin{equation}\label{Psi}
\Psi_{\varep,j}^{\alpha\beta}(x)
=\Phi_{\varep,j}^{\alpha\beta} (x)
-x_j \delta_{\alpha\beta} -\varep \chi_j^{\alpha\beta} \left(\frac{x}{\varep}\right).
\end{equation}

\begin{lemma}\label{6.3-lemma}
Suppose that
 $\Omega$ and $\mathcal{L}$ satisfy the same conditions as in Theorem \ref{corrector-theorem}.
Then
\begin{equation}\label{6.3.1}
|\nabla \Psi_\varep (x)|\le \frac{C\varep}{\delta(x)}
\qquad \text{ for any } x\in\Omega.
\end{equation}
\end{lemma}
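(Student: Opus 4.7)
I would verify that $\Psi_\varep$ satisfies the hypotheses of Lemma~\ref{6.2-lemma} with boundary data of size $O(\varep)$, and then invoke that lemma directly.

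First, $\mathcal{L}_\varep \Psi_\varep=0$ in $\Omega$: a direct computation, together with (\ref{corrector-equation}), shows that $P_j^\beta+\varep\chi_j^\beta(x/\varep)$ is $\mathcal{L}_\varep$-harmonic on all of $\brd$, so subtracting it from $\Phi_{\varep,j}^\beta$ preserves the equation. Next, set
\begin{equation*}
B_{ij}^{\alpha\beta}(y)=a_{ij}^{\alpha\beta}(y)+a_{ik}^{\alpha\gamma}(y)\frac{\partial \chi_j^{\gamma\beta}}{\partial y_k}(y),
\end{equation*}
so that $\hat a_{ij}^{\alpha\beta}=\int_{[0,1]^d} B_{ij}^{\alpha\beta}\,dy$ by (\ref{homogenized-coefficient}). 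A direct calculation of the conormal derivative, combined with the boundary condition in (\ref{Phi}), yields
\begin{equation*}
\Big(\frac{\partial \Psi_{\varep,j}^\beta}{\partial\nu_\varep}\Big)^\alpha(x)=n_i(x)\bigl[\hat a_{ij}^{\alpha\beta}-B_{ij}^{\alpha\beta}(x/\varep)\bigr].
\end{equation*}

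The heart of the proof is to rewrite this boundary data as a tangential derivative of functions of sup-norm $O(\varep)$. By (\ref{corrector-equation}), $B_{ij}^{\alpha\beta}(y)-\hat a_{ij}^{\alpha\beta}$ is $\mathbb{Z}^d$-periodic, has zero mean over $[0,1]^d$, and is divergence-free in the index $i$. The standard construction of flux correctors in periodic homogenization then yields bounded $\mathbb{Z}^d$-periodic functions $E_{kij}^{\alpha\beta}(y)$, antisymmetric in $(k,i)$, such that
\begin{equation*}
\hat a_{ij}^{\alpha\beta}-B_{ij}^{\alpha\beta}(y)=\frac{\partial}{\partial y_k}E_{kij}^{\alpha\beta}(y);
\end{equation*}
for each fixed $(j,\alpha,\beta)$, one takes the antisymmetric part of the periodic solution of $\Delta\phi_{ki}=\partial_k[\hat a-B]_{ij}^{\alpha\beta}-\partial_i[\hat a-B]_{kj}^{\alpha\beta}$, with $L^\infty$ bounds following from $B\in C^\lambda(\brd)$ via Schauder estimates. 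Setting $F_{ki}^{\alpha\beta}(x):=\varep E_{kij}^{\alpha\beta}(x/\varep)$, one has $\partial_{x_k}F_{ki}^{\alpha\beta}(x)=\hat a_{ij}^{\alpha\beta}-B_{ij}^{\alpha\beta}(x/\varep)$ and $F_{ki}=-F_{ik}$. The algebraic identity $2\,n_i\partial_k F_{ki}=(n_i\partial_k-n_k\partial_i)F_{ki}$, valid for any antisymmetric tensor (sum over $k,i$), then expresses
\begin{equation*}
\Big(\frac{\partial \Psi_{\varep,j}^\beta}{\partial\nu_\varep}\Big)^\alpha=\sum_{i,k}\Big(n_i\frac{\partial}{\partial x_k}-n_k\frac{\partial}{\partial x_i}\Big)g_{ik}^{\alpha\beta}, \qquad g_{ik}^{\alpha\beta}(x):=\tfrac{\varep}{2}E_{kij}^{\alpha\beta}(x/\varep),
\end{equation*}
with $\|g_{ik}^{\alpha\beta}\|_{L^\infty(\partial\Omega)}\le C\varep$. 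Since $\mathcal{L}_\varep\Psi_\varep=0$ in $\Omega$, applying Lemma~\ref{6.2-lemma} yields (\ref{6.3.1}).

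\textbf{Main obstacle.} The only nontrivial ingredient is the existence of bounded antisymmetric flux correctors $E_{kij}^{\alpha\beta}$; this is a standard tool in periodic homogenization, but is the conceptual crux of the reduction, since it converts the highly oscillatory Neumann data of $\Psi_\varep$ into a tangential derivative of size $\varep$. Note that the symmetry assumption $A^*=A$ is not used in Lemma~\ref{6.3-lemma} itself; it enters the proof of Theorem~\ref{corrector-theorem} only through Lemma~\ref{6.2-lemma}, which relies on the Rellich estimate (\ref{Rellich-estimate}).
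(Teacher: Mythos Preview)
Your proof is correct and follows essentially the same approach as the paper: verify $\mathcal{L}_\varep\Psi_\varep=0$, compute the conormal derivative as $n_i[\hat a_{ij}^{\alpha\beta}-B_{ij}^{\alpha\beta}(x/\varep)]$, construct a periodic potential to rewrite this as a tangential derivative of an $O(\varep)$ function, and invoke Lemma~\ref{6.2-lemma}. The only cosmetic difference is in the potential: the paper solves $\Delta U_{i\ell}^{\alpha\gamma}=H_{i\ell}^{\alpha\gamma}$, sets $F_{i\ell k}=\partial_k U_{i\ell}$, and uses $\partial_i F_{i\ell k}=0$ (from harmonicity and periodicity of $\partial_i U_{i\ell}$) to insert a zero term $-n_k\partial_i F_{i\ell k}$, whereas you package the same object as an explicitly antisymmetric flux corrector $E_{kij}$; these are equivalent realizations of the same standard construction.
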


\begin{proof}
Fix $1\le \ell \le d$ and $1\le \gamma \le m$.
Let $w=(w^1, \dots, w^m)=(\Psi_{\varep,\ell}^{1\gamma}, \dots, \Psi_{\varep,\ell}^{m\gamma})$.
Note that $\mathcal{L}_\varep (w)=0$ in $\Omega$.
In view of Lemma \ref{6.2-lemma} it suffices to show that
there exists $ g_{ij}\in C^1(\partial\Omega)$ such that 
\begin{equation}\label{6.3.2}
\left\{ 
\aligned
& \frac{\partial w}{\partial \nu_{\varep}}
=\sum_{i,j}
\left(n_i \frac{\partial}{\partial x_j}
-n_j \frac{\partial}{\partial x_i}\right) g_{ij},\\
& \| g_{ij}\|_{L^\infty(\partial\Omega)}\le C\varep.
\endaligned
\right.
\end{equation}

To this end we observe that by the definition of $\Phi_{\varep,j}^{\alpha\beta}$ in (\ref{Phi}),
$$
\aligned
\left( \frac{\partial w}{\partial\nu_\varep}\right)^\alpha
& =n_i a_{ij}^{\alpha\beta} \left(\frac{x}{\varep}\right)
\frac{\partial }{\partial x_j}
\left\{ \Phi_{\varep,\ell}^{\beta\gamma}\right\}
-n_i a_{ij}^{\alpha\beta} \left(\frac{x}{\varep}\right)
\frac{\partial}{\partial x_j}
\left\{ x_\ell \delta_{\beta\gamma}
+\varep \chi_\ell^{\beta\gamma} \left(\frac{x}{\varep}\right) \right\}\\
&
=n_i \hat{a}_{ij}^{\alpha\beta}
\frac{\partial}{\partial x_j}
\big\{ x_\ell \delta_{\beta\gamma}\big\}
-
n_i a_{ij}^{\alpha\beta} \left(\frac{x}{\varep}\right)
\frac{\partial}{\partial x_j}
\left\{ x_\ell \delta_{\beta\gamma}
+\varep \chi_\ell^{\beta\gamma} \left(\frac{x}{\varep}\right)\right\}\\
&=
n_i \hat{a}^{\alpha\gamma}_{i\ell}
-n_i a_{ij}^{\alpha\beta}
\left(\frac{x}{\varep}\right)
\left\{ \delta_{j\ell}\delta_{\beta\gamma}
+\frac{\partial \chi_\ell^{\beta\gamma}}{\partial x_j} \left(\frac{x}{\varep}\right)\right\},
\endaligned
$$
where $\hat{a}_{ij}^{\alpha\beta}$ are the homogenized coefficients defined 
by (\ref{homogenized-coefficient}).
Let
\begin{equation}\label{6.3.3}
H_{i\ell}^{\alpha\gamma} (y)
=\hat{a}^{\alpha\gamma}_{i\ell}
-a_{ij}^{\alpha\beta} (y)
\left\{ 
\delta_{j\ell}\delta_{\beta\gamma}
+\frac{\partial \chi_\ell^{\beta\gamma}}{\partial y_j} (y)\right\}.
\end{equation}
It follows from the definition of $\hat{a}_{i\ell}^{\alpha\gamma}$ that
$$
\int_{[0,1]^d} H_{i\ell}^{\alpha\gamma} (y)\, dy =0.
$$
Thus we may solve the Poisson equation on $[0,1]^d$ with periodic boundary
conditions,
\begin{equation}\label{6.3.4}
\left\{
\aligned
& \Delta U_{i\ell}^{\alpha\gamma} =H_{i\ell}^{\alpha\gamma} \quad \text{ in } \brd,\\
& U_{i\ell}^{\alpha\gamma} (y) \text{ is periodic with respect to } \mathbb{Z}^d.
\endaligned
\right.
\end{equation}
Since $A(y)$ and $\nabla\chi (y)$ are H\"older continuous,
$\nabla^2 U_{i\ell}^{\alpha\gamma}$ is H\"older continuous.
In particular, we have $\|\nabla U_{i\ell}^{\alpha\gamma}\|_\infty
\le C$, where $C$ depends only on $\mu$, $\lambda$ and $\tau$.

Now let
$$
F_{i\ell k}^{\alpha\gamma} (y)
=\frac{\partial }{\partial y_k} \bigg\{ U_{i\ell}^{\alpha\gamma} (y)\bigg\}.
$$
Then
$$
H_{i\ell}^{\alpha\gamma} (y)=\frac{\partial}{\partial y_k}
\bigg\{ F_{i\ell k}^{\alpha\gamma} (y)\bigg\}
$$
and hence
\begin{equation}\label{6.3.5}
\aligned
\left(\frac{\partial w}{\partial \nu_\varep}\right)^\alpha
&=n_i (x) H_{i\ell}^{\alpha\gamma}
\left(\frac{x}{\varep}\right)\\
&=
n_i(x) \frac{\partial}{\partial x_k}
\bigg\{\varep
F_{i\ell k}^{\alpha \gamma} \left(\frac{x}{\varep}\right)\bigg\}.
\endaligned
\end{equation}
We claim that
\begin{equation}\label{6.3.6}
\frac{\partial }{\partial y_i}
\bigg\{ F_{i\ell k}^{\alpha\gamma} (y)\bigg\}
=0.
\end{equation}
Assume the claim is true. We may then write
\begin{equation}\label{6.3.7}
\left(\frac{\partial w}{\partial \nu_\varep}\right)^\alpha
=
n_i(x) \frac{\partial}{\partial x_k}
\bigg\{\varep
F_{i\ell k}^{\alpha \gamma} \left(\frac{x}{\varep}\right)\bigg\}
-
n_k(x) \frac{\partial}{\partial x_i}
\bigg\{\varep
F_{i\ell k}^{\alpha \gamma} \left(\frac{x}{\varep}\right)\bigg\}
\qquad \text{ on }\partial\Omega.
\end{equation}
Since $\| \varep F_{i\ell k}^{\alpha\gamma} (x/\varep)\|_\infty \le C\varep$,
we obtain the desired (\ref{6.3.2}).

Finally, to show (\ref{6.3.6}), we observe that
$$
\frac{\partial}{\partial y_i}
\bigg\{ H_{i\ell}^{\alpha\gamma}(y)\bigg\}=0 \qquad \text{ in }\brd,
$$
which follows directly from (\ref{corrector-equation}).
In view of (\ref{6.3.4}) this implies that
 $\frac{\partial}{\partial y_i}
\big\{U_{i\ell}^{\alpha\gamma} (y)\big\}$ is harmonic in $\brd$.
Since it is also periodic, we may deduce that  
$\frac{\partial}{\partial y_i}
\big\{U_{i\ell}^{\alpha\gamma} (y)\big\}$ is constant.
As a result,
$$
\frac{\partial }{\partial y_i}
\bigg\{ F_{i\ell k}^{\alpha\gamma} (y)\bigg\}
=\frac{\partial^2}{\partial y_k\partial y_i}
\bigg\{ U_{i\ell}^{\alpha \gamma}(y)\bigg\}
=0 \qquad \text{ in }\brd.
$$
This completes the proof of Lemma \ref{6.2-lemma}.
\end{proof}

\noindent{\bf Proof of Theorem \ref{corrector-theorem}.}
It follows from (\ref{Psi}) and (\ref{6.3.1}) that
\begin{equation}\label{6.4.1}
|\nabla \Phi_\varep (x)| \le C +\frac{C\varep}{\delta (x)}
\qquad \text{ for any }x\in \Omega.
\end{equation}
This implies that $|\nabla \Phi_\varep (x)|\le C$ if $\delta(x)\ge c\varep$.
To estimate $|\nabla \Phi_\varep (x)|$ for $x$ with $\delta (x)\le c\varep$,
we use a standard blow-up argument.

Fix $j$ and $\beta$.
Let $w(x)=\varep^{-1}\Phi_{\varep,j}^\beta (\varep x)$.
Then $\mathcal{L}_1 (w)=0$ and
$$
\frac{\partial w}{\partial\nu_1}
=\frac{\partial \Phi_{\varep, j}^\beta}{\partial \nu_\varep}
(\varep x)
=n_i(\varep x) \hat{a}_{ij}^{\alpha\beta}.
$$
Since $\Omega$ is a $C^{1, \alpha_0}$ domain, its normal $n(x)$ is H\"older
continuous.
Thus, by the classical regularity results for
the Neumann problem with data in H\"older spaces,
\begin{equation}\label{6.4.2}
\|\nabla \Phi_\varep\|_{L^\infty (B(Q,\varep)\cap \Omega)}
\le
C+ C\left\{ \frac{1}{\varep^d}
\int_{B(Q,2\varep)\cap\Omega}
|\nabla \Phi_\varep|^p \, dx \right\}^{1/p}
\end{equation}
for any $p>0$, where $Q\in \partial\Omega$ and $C$ depends only on $d$, $m$,
$p$, $\mu$, $\lambda$,
$\tau$ and $\Omega$.
We remark that estimate (\ref{6.4.2}) with $p=2$ is well known and the case $0<p<2$
follows from the case $p=2$ by a convexity argument.
Finally, it follows from (\ref{6.4.1}) and (\ref{6.4.2}) with $p<1$ that
$$
\|\nabla \Phi_\varep\|_{L^\infty(B(Q,\varep)\cap\Omega)}
\le C.
$$
This finishes the proof of Theorem \ref{corrector-theorem}.
\qed

\begin{remark}\label{remark-6.1}
{\rm Fix $\eta\in C_0^\infty(\mathbb{R}^{d-1})$ so that $\eta(x^\prime)=1$ for $|x^\prime|\le 2$
and $\eta(x^\prime)=0$ for $|x^\prime|\ge 3$.
For any function $\psi$ satisfying the condition (\ref{psi}),
we may construct a bounded $C^{1, \alpha_0}$ domain $\Omega_\psi$
in $\brd$ with the following property,
\begin{equation}\label{6.6.1}
\aligned
& D_{\psi\eta} (4)\subset \Omega_\psi\subset \big\{ (x^\prime, x_d): \ |x^\prime|<8
\text{ and } |x_d|<8(M_0+1)\big\},\\
&\big\{ (x^\prime, (\psi\eta)(x^\prime)): \ |x^\prime|<4\big\}
\subset \partial\Omega_\psi.
\endaligned
\end{equation}
Clearly, the domain $\Omega_\psi$ can be constructed in such a way that
$\Omega_\psi\setminus \{ (x^\prime, (\psi\eta)(x^\prime)): \ |x^\prime|\le 4\}$
depends only on $M_0$.

Let $\Phi_\varep(x)=\Phi_\varep(x, \Omega_\psi, A)$ be the matrix of functions satisfying
 (\ref{Phi}) with $\Omega=\Omega_\psi$ and $\Phi_\varep (0)=0$.
It follows from Theorem \ref{corrector-theorem} that
$\|\nabla \Phi_\varep\|_{L^\infty (\Omega)} \le C$, where
$C$ depends only on $d$, $m$, $\mu$, $\lambda$, $\tau$ and $(\alpha_0, M_0)$.
}
\end{remark}

\section{Boundary Lipschitz estimates}

In this section we establish the uniform boundary Lipschitz estimate
under the assumption that $A\in \Lambda(\mu,\lambda,\tau)$ and
$A^*=A$.

\begin{thm}\label{boundary-Lipschitz-theorem}
Let $\Omega$ be a bounded $C^{1,\alpha_0}$ domain.
Suppose that $A\in \Lambda(\mu, \lambda,\tau)$ and $A^*=A$.
Let $\mathcal{L}_\varep (u_\varep)=0$ in $B(Q,\rho)\cap \Omega$
and $\frac{\partial u_\varep}{\partial\nu_\varep}=g
$ on $B(Q,\rho)\cap \partial\Omega$ for some $Q\in \partial\Omega$
and $0<\rho<c$.
Assume that $g\in C^\eta (B(Q, \rho)\cap\partial\Omega)$ for some $\eta\in (0,\alpha_0)$.
Then
\begin{equation}\label{boundary-Lipschitz-estimate}
\| \nabla u_\varep\|_{L^\infty (B(Q,\rho/2)\cap\Omega)}
\le C\left\{
\rho^{-1} \| u_\varep\|_{L^\infty (B(Q, \rho)\cap\Omega)}
+\| g\|_{C^\eta (B(Q, \rho)\cap \partial\Omega)}\right\},
\end{equation}
where $c=c(\Omega)>0$ and
$C$ depends only on $d$, $m$, $\mu$, $\lambda$, $\tau$, $\eta$ and $\Omega$.
\end{thm}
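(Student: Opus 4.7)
The plan is to reduce, by rescaling and straightening the boundary, to a local Lipschitz estimate on $D(1,\psi)$ of the form $\|\nabla u_\varep\|_{L^\infty(D(1/2,\psi))}\le C\{\|u_\varep\|_{L^\infty(D(1,\psi))}+\|g\|_{C^\eta(\Delta(1,\psi))}\}$, and then run a Campanato-type compactness iteration in the spirit of \cite{AL-1987}, with the linear profile $x_j e_\beta$ replaced by the corrector $\Phi_{\varep,j}^\beta$ built in Section 7. The key intermediate statement is a one-step approximation lemma: there exist $\theta\in(0,1/4)$, $\sigma\in(0,\eta)$, and $\varep_0>0$, depending only on the admissible data, such that whenever $0<\varep<\varep_0$, $\mathcal{L}_\varep(u_\varep)=0$ in $D(1,\psi)$, $\partial u_\varep/\partial\nu_\varep=g$ on $\Delta(1,\psi)$, and $\|u_\varep\|_\infty+\|g\|_{C^\eta}\le 1$, one can find a constant vector $M=(M_j^\beta)$ with $|M|\le C$ and
$$
\sup_{D(\theta,\psi)}\bigl|u_\varep-u_\varep(0)-M_j^\beta\,\Phi_{\varep,j}^\beta\bigr|\le \theta^{1+\sigma}.
$$

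To establish this lemma I argue by contradiction. If it fails, I extract sequences $\varep_k\to 0$, $A_k\in\Lambda(\mu,\lambda,\tau)$ with $A_k^*=A_k$, $\psi_k$ satisfying (\ref{psi}), $u_k$ and $g_k$ violating the conclusion. By the Cacciopoli inequality (\ref{Cacciopoli}) the sequence $\{u_k\}$ is bounded in $W^{1,2}(D(1/2,\psi_k))$, so Theorem \ref{compactness-theorem} yields (along a subsequence) $\psi_k\to\psi_0$ in $C^1$, $u_k\to u_0$ strongly in $L^2$, $\hat{A}_k\to A^0$, and a limit profile $u_0$ solving $\mathrm{div}(A^0\nabla u_0)=0$ in $D(1/2,\psi_0)$ with Neumann datum $g_0$, the $C^\eta$ (hence $L^2$) limit of $g_k$. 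Classical boundary $C^{1,\eta}$ regularity for constant-coefficient systems in $C^{1,\alpha_0}$ domains gives an affine approximation $|u_0(x)-u_0(0)-M_j^\beta x_j e_\beta|\le C|x|^{1+\eta}$ with $|M|\le C$. On the other hand, writing $\Phi_{\varep_k,j}^\beta-x_j e_\beta$ as a solution of $\mathcal{L}_{\varep_k}(\cdot)=0$ with a uniformly bounded $\varep$-dependent Neumann datum, the uniform boundary Hölder estimate of Theorem \ref{boundary-holder-theorem} forces $\Phi_{\varep_k,j}^\beta\to x_j e_\beta$ uniformly on $D(1/2,\psi_0)$. These two facts combined contradict the assumed violation, provided $\theta$ is chosen so that $C\theta^{1+\eta}<\tfrac12\theta^{1+\sigma}$.

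The iteration then proceeds by induction on dyadic scales $r=\theta^k$. Since $\mathcal{L}_\varep(\Phi_\varep)=0$ and $\partial\Phi_{\varep,j}^\beta/\partial\nu_\varep=\partial P_j^\beta/\partial\nu_0$ on $\partial\Omega$, the function $w_\varep:=u_\varep-u_\varep(0)-M_j^\beta\Phi_{\varep,j}^\beta$ is again $\mathcal{L}_\varep$-harmonic with a Neumann datum that is $C^\eta$ with small norm. Rescaling $D(\theta^k,\psi)$ to $D(1,\psi_k)$ with $\psi_k(x')=\theta^{-k}\psi(\theta^k x')$ and $\varep$ replaced by $\varep\theta^{-k}$ leaves the hypotheses intact whenever $\varep\theta^{-k}\le\varep_0$, so I obtain $M^{(k)}$ with $|M^{(k+1)}-M^{(k)}|\le C\theta^{k\sigma}$ and
$$
\sup_{D(\theta^k,\psi)}\bigl|u_\varep-u_\varep(0)-(M^{(k)})_j^\beta\Phi_{\varep,j}^\beta\bigr|\le \theta^{k(1+\sigma)}\bigl\{\|u_\varep\|_\infty+\|g\|_{C^\eta}\bigr\}.
$$
The bound $\|\nabla\Phi_\varep\|_{L^\infty(\Omega)}\le C$ from Theorem \ref{corrector-theorem} keeps the telescoping corrections under control and converts this Campanato-type decay into a pointwise Lipschitz estimate. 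For scales $r<\varep/\varep_0$ I rescale by $w(x)=\varep^{-1}u_\varep(\varep x)$, apply the classical $C^{1,\eta}$ boundary estimate to $\mathcal{L}_1(w)=0$ on the blown-up $C^{1,\alpha_0}$ surface, and combine with the large-scale bound.

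The main obstacle is the one-step approximation lemma, and more specifically the closure of the iteration: the correctors $\Phi_\varep$ must simultaneously (i) cancel the $\varep$-dependent part of the Neumann datum in the sense that $M\cdot\partial\Phi_\varep/\partial\nu_\varep$ matches the affine Neumann trace of the homogenized limit, and (ii) be uniformly Lipschitz so that the errors $|u_\varep-M\cdot\Phi_\varep|$ at scale $\theta^k$ translate, after one interior estimate, into the correct gradient bound. Without the sharp estimate $\|\nabla\Phi_\varep\|_\infty\le C$ of Theorem \ref{corrector-theorem}, the geometric iteration would lose a factor growing with $\varep^{-1}$; it is precisely here, and only here, that the symmetry assumption $A^*=A$ is used, via the Rellich inequality feeding into Lemma \ref{6.2-lemma}.
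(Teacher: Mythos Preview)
Your strategy is the paper's: a compactness one-step lemma (Lemma \ref{lemma-7.3}), an iteration over scales $\theta^k$ (Lemma \ref{lemma-7.4}), and a blow-up below $\varep/\varep_0$ (end of Lemma \ref{lemma-7.6}). But the iteration as you state it does not close. When you rescale $D(\theta^k,\psi)$ to $D(1,\psi_k)$ and reapply the one-step lemma, the corrector it produces is $\Phi_{\varep/\theta^k}(\cdot,\Omega_{\psi_k},A)$, which in the original variables is $\Pi_\varep^k(x)=\theta^k\Phi_{\varep/\theta^k}(\theta^{-k}x,\Omega_{\psi_k},A)$, a \emph{different} function from the fixed $\Phi_\varep=\Phi_\varep(\cdot,\Omega_\psi,A)$ you start with. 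There is no mechanism for collapsing the successive $\Pi_\varep^k$ into a single $(M^{(k)})\cdot\Phi_\varep$; the paper instead carries the sum $\sum_{j=0}^{\ell-1}\theta^{\kappa j}\langle\Pi_\varep^j,\mathbf{B}_\varep^j\rangle$ and uses $\|\nabla\Pi_\varep^j\|_\infty\le C$, $\Pi_\varep^j(0)=0$ uniformly in $j$ (Remark \ref{remark-6.1}) to deduce $\|u_\varep\|_{L^\infty(D(\rho))}\le C\rho$, from which Cacciopoli plus the interior estimate give the Lipschitz bound.

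Two further points are glossed over. First, for the Neumann datum of the remainder to have geometrically decaying $C^\eta$ norm through the iteration, the paper normalizes $g(0)=0$ (by subtracting a suitable combination of $\Phi_{\varep,j}^\beta$; start of Lemma \ref{lemma-7.6}) and constrains $\mathbf{B}_\varep$ by $\langle n(0)\hat A,\mathbf{B}_\varep\rangle=0$ in the one-step lemma, so that the remainder's datum again vanishes at the origin; without this, your claim that $w_\varep$ has ``Neumann datum with small $C^\eta$ norm'' fails, since the $L^\infty$ part $|g(0)-M\cdot n(0)\hat A|$ need not decay. Second, $\Phi_\varep-xI$ is not $\mathcal{L}_\varep$-harmonic (only $\Phi_\varep-xI-\varep\chi(x/\varep)$ is), so in the compactness step the convergence $\Phi_{\varep_k}\to xI$ is obtained via Lemma \ref{6.3-lemma}, not from the boundary H\"older estimate applied to $\Phi_\varep-xI$.
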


Let $D(\rho)=D(\rho, \psi)$ and $\Delta (\rho)=D(\rho, \psi)$ 
be defined by (\ref{definition-of-D}) with $\psi\in C^{1, \alpha_0}(\br^{d-1})$,
$\psi(0)=|\nabla\psi(0)|=0$ and $\| \nabla \psi\|_{C^{\alpha_0}(\br^{d-1})}\le M_0$.
We will use $\| g\|_{C^{0,\eta}(K)}$ to denote 
$$
\inf\big\{ M: 
\, |g(x)-g(y)|\le M |x-y|^\beta \text{ for all } x,y \in K\big\}.
$$

\begin{lemma}\label{lemma-7.3}
Let $0<\eta<\alpha_0$ and
$\kappa=(1/4)\eta$.
Let $\Phi_\varep =\Phi_\varep (x, \Omega_\psi, A)$
be defined as in Remark \ref{remark-6.1}.
There exist constants $\varep_0>0$, $\theta\in (0,1)$ and $C_0>0$, depending only on 
$d$, $m$, $\mu$, $\lambda$, $\tau$, $\eta$ and $(\alpha_0, M_0)$, such that
\begin{equation}\label{estimate-7.3.1}
 \| u_\varep -<
\Phi_\varep, \mathbf{B}_\varep >\|_{L^\infty (D(\theta))}\\
\le 
\theta^{1+\kappa},
\end{equation}
for some $\mathbf{B}_\varep =(b_{\varep, j}^\beta)\in \mathbb{R}^{dm}$ with 
the property that 
$$
|\mathbf{B}_\varep |\le C_0\theta^{-1}
\| u_\varep\|_{L^\infty (D(\theta))}
 \text{ and } <n(0)\hat{A}, \mathbf{B}_\varep>=
n_i(0)\hat{a}_{ij}^{\alpha\beta} b_{\varep, j}^\beta
=0,
$$ 
whenever
$$
\varep<\varep_0, \quad
\mathcal{L}_\varep (u_\varep)=0 \text{ in } D(1),\quad
\frac{\partial u_\varep}{\partial\nu_\varep} =g \text{ on }
\Delta (1), \quad u_\varep (0)=0,
$$
and
\begin{equation}\label{estimate-7.3.2}
\| g\|_{C^{0,\eta} (\Delta(1))} \le 1, \quad g(0)=0,
\quad 
\| u_\varep\|_{L^\infty (D(1))}\le 1.
\end{equation}
\end{lemma}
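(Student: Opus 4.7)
The plan is to follow the compactness scheme of Lemma 3.1, with the matrix of correctors $\Phi_\varep$ (whose uniform Lipschitz bound is Theorem 6.1) playing the role that the affine profile $x$ would play for a constant-coefficient Neumann problem. Fix $\eta' \in (\kappa, \tfrac{1}{2}\min\{\eta, \alpha_0\})$. By the classical boundary $C^{1,\eta'}$ regularity theory for constant-coefficient elliptic systems on $C^{1,\alpha_0}$ graph domains with $C^{0,\eta}$ Neumann data, any $w$ solving $\mathcal{L}_0 w = 0$ in $D(1/2, \psi)$, $\partial w/\partial \nu_0 = h$ on $\Delta(1/2, \psi)$ with $w(0) = 0$, $h(0) = 0$, $\|w\|_\infty \le 1$, $\|h\|_{C^{0,\eta}} \le 1$, and $\psi$ as in (2.10), satisfies
$$|w(x) - \nabla w(0) \cdot x| \le C_1 |x|^{1+\eta'}, \qquad |\nabla w(0)| \le C_1,$$
with $C_1$ depending only on $d, m, \mu, \eta, \alpha_0, M_0$. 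Choose $\theta \in (0, 1/8)$ so that $C_1 \theta^{\eta' - \kappa} \le 1/4$.

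Argue by contradiction. If the lemma fails for these choices, there are sequences $\varep_k \to 0$, $A_k \in \Lambda(\mu, \lambda, \tau)$, $\psi_k$ satisfying (2.10), and $u_k$, $g_k$ obeying all hypotheses, for which (7.11) fails on $D(\theta, \psi_k)$ for every admissible $\mathbf{B}$. Apply Theorem 2.3 and pass to subsequences to get $\psi_k \to \psi_0$ in $C^1$, $\hat{A}_k \to A^0$ with $A^0$ elliptic, $g_k \to g_0$ uniformly on the boundary (via Arzel\`a--Ascoli, using $\|g_k\|_{C^{0,\eta}} \le 1$), and $u_k$ converging to some $u_0$ in the pulled-back $L^2$ sense. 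The uniform boundary H\"older estimate (Theorem 3.1) upgrades this to uniform convergence on $\overline{D(1/2, \psi_0)}$, so $u_0(0) = 0$, $\|u_0\|_\infty \le 1$, and $u_0$ solves $\mathcal{L}_0 u_0 = 0$ in $D(1/2, \psi_0)$ with $\partial u_0/\partial \nu_0 = g_0$, $g_0(0) = 0$, $\|g_0\|_{C^{0,\eta}} \le 1$.

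Set $b_j^\beta = \partial_j u_0^\beta(0)$, and choose $\mathbf{B}_{\varep_k}$ close to $\mathbf{B} = (b_j^\beta)$ lying in the kernel of the map $(v_j^\beta) \mapsto n_i(0)\hat{a}_{k, ij}^{\alpha\beta} v_j^\beta$; this is possible because the Neumann condition at $0$ yields $n_i(0) A^0_{ij, \alpha\beta} b_j^\beta = g_0^\alpha(0) = 0$, and $\hat{A}_k \to A^0$ so a linear adjustment of size $O(|\hat{A}_k - A^0|)\cdot|\mathbf{B}|$ suffices, giving $|\mathbf{B}_{\varep_k} - \mathbf{B}| = o(1)$. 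The $C^{1,\eta'}$ estimate gives $|\mathbf{B}| \le C_1$; moreover, the standard rescaled gradient estimate $|\nabla u_0(0)| \le C\theta^{-1}\|u_0\|_{L^\infty(D(\theta,\psi_0))}$ for the homogenized system transfers to $\mathbf{B}_{\varep_k}$ via uniform convergence of $u_k$ to $u_0$ and secures the size constraint in the lemma for $C_0$ large enough. Theorem 6.1 together with a compactness argument (Lipschitz bound $\|\nabla\Phi_\varep\|_\infty \le C$, Arzel\`a--Ascoli, and uniqueness for the homogenized Neumann problem) yields $\Phi_{\varep_k, j}^{\alpha\beta}(x) \to x_j \delta_{\alpha\beta}$ uniformly on $\overline{D(\theta, \psi_0)}$, hence $\langle \Phi_{\varep_k}, \mathbf{B}_{\varep_k}\rangle \to \nabla u_0(0) \cdot x$ uniformly there. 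Combining everything,
$$\|u_k - \langle \Phi_{\varep_k}, \mathbf{B}_{\varep_k}\rangle\|_{L^\infty(D(\theta, \psi_k))} \le o(1) + C_1 \theta^{1 + \eta'} \le \theta^{1+\kappa}$$
for $k$ large, contradicting the failure of (7.11).

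I expect the main obstacle to be the bookkeeping needed to force $\mathbf{B}_{\varep_k}$ into the kernel of $n(0)\hat{A}_k$ (rather than $n(0)A^0$) while preserving the prescribed size bound; this requires the perturbative correction by $O(|\hat{A}_k - A^0|)$ together with the scale-dependent gradient bound for the homogenized limit. A secondary technical point is the uniform convergence $\Phi_\varep \to xI$ up to the boundary: the bound $|\nabla(\Phi_\varep - xI - \varep\chi(\cdot/\varep))| \le C\varep/\delta(x)$ of Lemma 6.3 is not integrable near $\partial\Omega$, so the uniform convergence is cleanest via Arzel\`a--Ascoli from $\|\nabla \Phi_\varep\|_\infty \le C$ and identification of the unique homogenized limit, rather than via direct integration of Lemma 6.3.
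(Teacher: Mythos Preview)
Your compactness scheme is correct and matches the paper's approach. The one point worth flagging is the construction of $\mathbf{B}_\varep$: rather than perturbing $\nabla u_0(0)$ into the kernel of $n(0)\hat{A}_k$, the paper fixes an explicit formula in terms of $u_\varep$ itself,
\[
b_{\varep,j}^\beta
=\left(\overline{\frac{\partial u_\varep^\beta}{\partial x_j}}\right)_{D(\theta)}
-n_j(0)\, h^{\beta\gamma} n_i (0) \hat{a}_{i\ell}^{\gamma \alpha}
\left(\overline{\frac{\partial u_\varep^\alpha}{\partial x_\ell}}\right)_{D(\theta)},
\]
i.e.\ the projection of the averaged gradient onto $\ker\big(n(0)\hat{A}\big)$. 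This has two advantages over your route: the kernel constraint $n_i(0)\hat{a}_{ij}^{\alpha\beta}b_{\varep,j}^\beta=0$ is automatic from the formula, and the size bound $|\mathbf{B}_\varep|\le C_0\theta^{-1}\|u_\varep\|_{L^\infty(D(\theta))}$ follows in one line from the divergence theorem, with no reference to the limit $u_0$ and no case split when $\|u_0\|_{L^\infty(D(\theta))}=0$. In the contradiction step, $\mathbf{B}_\varep^k\to\mathbf{B}_0$ (the analogous projection of $(\overline{\nabla u_0})_{D(\theta)}$) follows from uniform convergence of $u_k$ and the divergence theorem again. Your perturbative construction works too, but the bookkeeping you anticipate is genuinely avoided this way.

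For the convergence $\Phi_{\varep_k}\to xI$, the paper takes the route you mention in your closing paragraph: uniform H\"older bounds give compactness, and then Lemma~6.3 (the pointwise bound $|\nabla(\Phi_\varep-xI-\varep\chi(\cdot/\varep))|\le C\varep/\delta(x)$) forces the limit of $R_{\varep_k}=\Phi_{\varep_k}-xI$ to be locally constant in the interior, hence identically zero since $R_{\varep_k}(0)=0$. Your Arzel\`a--Ascoli plus uniqueness argument is also fine, but note that it requires identifying the limit as the solution of the homogenized Neumann problem on the \emph{varying} domains $\Omega_{\psi_k}$, whereas the Lemma~6.3 argument is purely local.
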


\begin{proof}
Let $\mathcal{L}_0 =-\text{\rm div} (A^0\nabla )$, 
where $A^0 =(\hat{a}^{\alpha\beta}_{ij})$
 is a constant $m\times m$ matrix satisfying (\ref{ellipticity}).
By boundary H\"older estimates for gradients of solutions to elliptic systems with constant
coefficients in $C^{1,\alpha_0}$ domains,
\begin{equation}\label{7.3.3}
\aligned
& \| w-<x,(\overline{\nabla w})_{D(r)}> \|_{L^\infty (D(r))}\\
&\qquad
\le C_1 r^{1+2\kappa} \left\{ \| g\|_{C^\eta (\Delta (1/2))}
+
\| w\|_{L^\infty (D(1/2))}\right\},
\endaligned
\end{equation}
for any $r\in (0,1/4)$, whenever $\mathcal{L}_0 (w)=0$ in $D(1/2)$,
$\frac{\partial w}{\partial\nu_0}=g$ on $\Delta(1/2)$ and $w(0)=0$.
The constant $C_1$ in (\ref{7.3.3})
 depends only on $d$, $m$, $\mu$, $\eta$ and $(\alpha_0, M_0)$.
Observe that if 
$$
g(0)=<n(0)A^0, (\nabla w)(0)>=0,
$$
 then
$
\| g\|_{C^\eta (\Delta (1/2))} \le C\| g\|_{C^{0,\eta} (\Delta(1/2))}
$ 
and
\begin{equation}\label{7.3.3.1}
\aligned
& |<n(0)A^0, (\overline{\nabla w})_{D(r)}>|
 =|<n(0)A^0, (\overline{\nabla w})_{D(r)}-(\nabla w) (0)>|\\
&\qquad \le Cr^{2\kappa} \left\{ \| g\|_{C^{0,\eta} (\Delta (1/2))}
+
\| w\|_{L^\infty (D(1/2))}\right\}.
\endaligned
\end{equation}
Consequently, if we let $\mathbf{B}_0 =(b_{0,j}^\beta)\in \mathbb{R}^{dm}$ with
\begin{equation}\label{7.3.3.2}
b_{0,j}^\beta
=\left(\overline{\frac{\partial w^\beta}{\partial x_j}}\right)_{D(r)}
-n_j(0) h^{\beta\gamma} n_i (0) \hat{a}_{i\ell}^{\gamma \alpha} 
\left(\overline{\frac{\partial w^\alpha}{\partial x_\ell}}\right)_{D(r)},
\end{equation}
where $(h^{\alpha\beta})_{m\times m}$ is the inverse matrix
of $(n_i(0)n_j(0)\hat{a}_{ij}^{\alpha\beta})_{m\times m}$, then
\begin{equation}\label{7.3.3.3}
\| w-<x,\mathbf{B}_0 > \|_{L^\infty (D(0,r))}
\le C_2 r^{1+2\kappa},
\end{equation}
for any $r\in (0,1/4)$, provided that $\mathcal{L}_0 (w)=0$ in $D(1/2)$,
$\frac{\partial w}{\partial\nu_0}=g$ on $\Delta(1/2)$, $w(0)=0$,
\begin{equation}\label{7.3.4}
\| g\|_{C^{0,\eta} (\Delta (1/2))}\le 1, \quad g(0)=0
\quad \text{ and } \quad
\| w\|_{L^\infty (D(1/2))}\le 1,
\end{equation}
where $C_2$ depends only on $d$, $m$, $\mu$, $\eta$ and $(\alpha_0, M_0)$.

Next we choose $\theta\in (0,1/4)$ so small that $2C_2\theta^{\kappa}
\le 1$.
We shall show by contradiction that for this $\theta$, there exists
$\varep_0>0$, depending only on $d$, $m$,
$\mu$, $\lambda$, $\tau$, $\eta$ and $(\alpha_0, M_0)$,
such that estimate (\ref{estimate-7.3.1})
holds with
\begin{equation}\label{7.3.3.5}
b_{\varep,j}^\beta
=\left(\overline{\frac{\partial u_\varep^\beta}{\partial x_j}}\right)_{D(\theta)}
-n_j(0) h^{\beta\gamma} n_i (0) \hat{a}_{i\ell}^{\gamma \alpha} 
\left(\overline{\frac{\partial u_\varep^\alpha}{\partial x_\ell}}\right)_{D(\theta)},
\end{equation}
if  $0<\varep<\varep_0$ and $u_\varep$ satisfies the conditions in Lemma \ref{lemma-7.3}.
We recall that 
 $(\hat{a}_{ij}^{\alpha\beta})$ 
in (\ref{7.3.3.5})
is the homogenized matrix given by (\ref{homogenized-coefficient}).
It is easy to verify that $n_i(0) \hat{a}_{ij}^{\alpha\beta} b_{\varep, j}^\beta =0$.
Also, by the divergence theorem, $|\mathbf{B}_\varep|
\le C_0 \theta^{-1} \| u_\varep\|_{L^\infty(D(\theta))}$. 

To show (\ref{estimate-7.3.1}) by contradiction,
let's suppose that there exist sequences
$\{ \varep_k\}$, $\{ A^k\} $, $\{ u_{\varep_k}\}$,  $\{ g_k\}$ and $\psi_k$ such that
$\varep_k\to 0$, $A^k\in \Lambda (\mu, \lambda,\tau)$, $\psi_k$ satisfies (\ref{psi}),
\begin{equation}\label{7.3.5}
\left\{
\aligned
\mathcal{L}_{\varep_k}^k (u_{\varep_k})  & =0 &\quad & \text{ in }D_k(1),\\
\frac{\partial u_{\varep_k}}{\partial\nu_{\varep_k}} & =g_k& \quad & \text{ on } \Delta_k(1),\\
u_{\varep_k} (0) &=g_k (0)  =0,
\endaligned
\right.
\end{equation}
\begin{equation}\label{7.3.6}
\| g_k\|_{C^{0,\eta} (\Delta_k (1))}
\le 1, \qquad \text{ \ }\qquad \| u_{\varep_k}\|_{L^\infty(D_k(1))}
\le 1,
\end{equation}
and
\begin{equation}\label{7.3.7}
\| u_{\varep_k} -<\Phi_{\varep_k}^k, \mathbf{B}^k_\varep>
\|_{L^\infty (D_k (\theta))}
> \theta^{1+\kappa},
\end{equation}
where $D_k (r) =D(r, \psi_k)$,
$\Delta_k (r)=\Delta (r, \psi_k)$,
 $\Phi^k_{\varep_k}=\Phi_{\varep_k} (x, \Omega_{\psi_k}, A^k)$
and $\mathbf{B}_\varep^k$ is given by (\ref{7.3.3.5}).
By passing to subsequences we may assume that as $k\to \infty$,
\begin{equation}\label{7.3.8}
\aligned
 \hat{A}^k  & \to A^0,\\
\psi_k  & \to \psi_0 \quad \text{ in } C^1 (|x^\prime|< 4),\\
 g_k(x^\prime, \psi_k(x^\prime))
& \to g_0 (x^\prime, \psi_0 (x^\prime)) \quad
\text{ in } C(|x^\prime|<1).\\
\endaligned
\end{equation}
Since $\| u_{\varep_k}\|_{C^\eta (D(1/2,\psi_k))} +\| \Phi_{\varep_k}^k\|_{C^\eta
(D(1/2, \psi_k))}\le C$ by
Theorem \ref{boundary-holder-theorem}, again by passing
to subsequences, we may also assume that
\begin{equation}\label{7.3.9}
\aligned
& u_{\varep_k} (x^\prime, x_d-\psi_k(x^\prime))
\to u_0 (x^\prime, x_d-\psi_0 (x^\prime))
\quad \text{ uniformly on } D(1/2, 0),\\
& R_{\varep_k}^k (x^\prime, x_d-\psi_k (x^\prime))
\quad\text{ converges uniformly on } D(1/2, 0),
\endaligned
\end{equation}
where $R_{\varep_k}^k (x) =\Phi_{\varep_k}^k (x) -x$.
Furthermore, in view of Theorem \ref{compactness-theorem}, we may assume that
$\mathcal{L}_0 (u_0)=0$ in $D(1/2, \psi_0)$ and $\frac{\partial u_0}{\partial\nu_0}=
g_0$ on $\Delta (1/2, \psi_0)$, where $\mathcal{L}_0
=-\text{\rm div} (A^0\nabla )$.

Note that by Lemma \ref{6.3-lemma},
 $R_{\varep_k}^k (x^\prime, x_d-\psi_k (x^\prime))$ must converge to a constant.
Since $R_{\varep_k}^k (0)=0$, we deduce that
$R_{\varep_k}^k (x^\prime, x_d-\psi_k (x^\prime))$ converges uniformly to $0$ on $D(1/2,0)$.
Thus, in view of (\ref{7.3.6})-(\ref{7.3.9}), we may conclude that
$ u_0(0)=g(0)=0$,
\begin{equation}\label{7.3.10}
 \| g\|_{C^{0,\eta}(\Delta(1/2, \psi_0))} \le 1, \quad \quad
\| u_0\|_{L^\infty(D(1/2, \psi_0))} \le 1
\end{equation}
and
\begin{equation}\label{7.3.11}
\| u_0 -<x, \mathbf{B}_0>
\|_{L^\infty (D (\theta, \psi_0))}
\ge \theta^{1+\kappa}.
\end{equation}
This, however, contradicts with (\ref{7.3.3.3})-(\ref{7.3.4}).
\end{proof}

\begin{remark}\label{remark-7.3}
{\rm 
Let $w =<\Phi_\varep ,\mathbf{B}_\varep>
=\Phi_{\varep, j}^{\alpha\beta} (x) b_{\varep,j}^\beta$,
 where $\Phi_\varep$ and $\mathbf{B}_\varep$ are given
by Lemma \ref{lemma-7.3}. Then $\mathcal{L}_\varep (w)=0$ and $\frac{\partial w}
{\partial\nu_\varep} =n_i(x)\hat{a}_{ij}^{\alpha\beta}b_{\varep, j}^\beta$.
In particular, we have $w(0)=0$ and
$\frac{\partial w}{\partial\nu_\varep} (0)=0$.
Also, note that in Lemma \ref{lemma-7.3},
one may choose any $\theta\in (0,\theta_1)$, where $2C_2\theta_1^\kappa=1$.
These observations are important to the proof of the next lemma.
}
\end{remark}

\begin{lemma}\label{lemma-7.4}
Let $\kappa$, $\varep_0$, $\theta$ be the constants
given by Lemma \ref{lemma-7.3}.
Suppose that $\mathcal{L}_\varep (u_\varep) =0$ in $D(1, \psi)$,
$\frac{\partial u_\varep}{\partial\nu_\varep} =g$ on $\Delta(1, \psi)$
and $u_\varep (0)=g(0)=0$.
Assume that $\varep<\theta^{\ell -1}\varep_0$ for some $\ell\ge 1$.
Then there exist $\mathbf{B}_\varep^j \in \mathbb{R}^{dm}$ for $j=0, 1, \dots, \ell-1$, such that
$$
<n(0)\hat{A}, \mathbf{B}_\varep^j>=0, \quad
|\mathbf{B}_\varep^j|\le C J 
$$
and
\begin{equation}\label{7.4.2}
\| u_\varep -\sum_{j=0}^{\ell -1}
\theta^{\kappa j}
< \Pi_\varep^j, \mathbf{B}_\varep^j >\|_{L^\infty(D(\theta^\ell, \psi))} 
\le \theta^{\ell(1+\kappa)} J,
\end{equation}
where 
$$
\aligned
& \Pi_\varep^j(x) =\theta^{j}\Phi_{\frac{\varep}{\theta^j}}
 (\theta^{-j}x, \Omega_{\psi_j}, A),\\ 
& J=\max \left\{
\| g\|_{C^{0,\eta} (\Delta(1, \psi))}, \| u_\varep\|_{L^\infty(D(1, \psi))}\right\}
\endaligned
$$
and $\psi_j (x^\prime)
=\theta^{-j}\psi(\theta^j x^\prime)$.
\end{lemma}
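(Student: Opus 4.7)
The plan is to prove this by induction on $\ell$. The base case $\ell=1$ is essentially Lemma \ref{lemma-7.3}: first normalize by $J$ so that the hypotheses (\ref{estimate-7.3.2}) hold, then apply Lemma \ref{lemma-7.3} (the $j=0$ rescaling is trivial since $\Pi_\varep^0 = \Phi_\varep(\cdot,\Omega_\psi,A)$ and $\psi_0=\psi$), and scale back to obtain $\mathbf{B}_\varep^0$ with $|\mathbf{B}_\varep^0|\le C_0\theta^{-1}J$.

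For the inductive step, assume the conclusion holds for $\ell$ and let $\varep<\theta^\ell\varep_0$. Since $\varep<\theta^{\ell-1}\varep_0$ too, the inductive hypothesis produces $\mathbf{B}_\varep^0,\dots,\mathbf{B}_\varep^{\ell-1}$. I would introduce the corrected function
$$\tilde u(x) := u_\varep(x)-\sum_{j=0}^{\ell-1}\theta^{\kappa j}\langle \Pi_\varep^j(x),\mathbf{B}_\varep^j\rangle,$$
and verify three things. First, $\mathcal{L}_\varep\tilde u=0$ in $D(1,\psi)$, because a direct scaling computation shows each $\Pi_\varep^j$ solves $\mathcal{L}_\varep\Pi_\varep^j=0$ (the $\varep/\theta^j$ in $\Phi_{\varep/\theta^j}$ exactly compensates the $\theta^{-j}$ in the argument). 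Second, $\tilde u(0)=0$ since $\Phi_{\varep/\theta^j}(0,\Omega_{\psi_j},A)=0$. Third, and most importantly, the Neumann datum $\tilde g:=\partial\tilde u/\partial\nu_\varep$ on $\Delta(1,\psi)$ satisfies $\tilde g(0)=0$ and $\|\tilde g\|_{C^{0,\eta}(\Delta(1,\psi))}\le CJ$. For this computation one uses the same scaling identification together with the boundary condition in (\ref{Phi}) to get
$$\tfrac{\partial}{\partial\nu_\varep}\langle \Pi_\varep^j,\mathbf{B}_\varep^j\rangle(x) = n_i(x)\hat{a}_{ik}^{\alpha\beta}(\mathbf{B}_\varep^j)_k^\beta;$$
evaluation at $x=0$ gives zero thanks to the standing orthogonality $\langle n(0)\hat A,\mathbf{B}_\varep^j\rangle=0$, while the Hölder bound follows from $\|n\|_{C^{\alpha_0}}<\infty$, $|\mathbf{B}_\varep^j|\le CJ$, and the geometric sum $\sum_{j\ge 0}\theta^{\kappa j}<\infty$.

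Next, I would rescale by $\theta^\ell$: set $v(x):=\theta^{-\ell}\tilde u(\theta^\ell x)$, $\varep':=\varep/\theta^\ell<\varep_0$, and note that $\psi_\ell(x'):=\theta^{-\ell}\psi(\theta^\ell x')$ still satisfies (\ref{psi}) with the same $M_0$. Then $\mathcal{L}_{\varep'}v=0$ in $D(1,\psi_\ell)$ and $\partial v/\partial\nu_{\varep'}=\tilde g(\theta^\ell\cdot)$ on $\Delta(1,\psi_\ell)$, with $v(0)=\tilde g(\theta^\ell\cdot)(0)=0$. The inductive bound transfers to $\|v\|_{L^\infty(D(1,\psi_\ell))}\le\theta^{\ell\kappa}J$, and Hölder scaling yields $\|\tilde g(\theta^\ell\cdot)\|_{C^{0,\eta}(\Delta(1,\psi_\ell))}\le C\theta^{\ell\eta}J\le C\theta^{\ell\kappa}J$ because $\kappa=\eta/4<\eta$. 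Dividing by $M:=C'J\theta^{\ell\kappa}$ for a sufficiently large absolute $C'$ puts the data in the form required by Lemma \ref{lemma-7.3} (applied inside $\Omega_{\psi_\ell}$), producing $\mathbf{B}'\in\mathbb{R}^{dm}$ with $\langle n(0)\hat A,\mathbf{B}'\rangle=0$, $|\mathbf{B}'|\le C_0\theta^{-1}$, and
$$\bigl\|v - M\langle \Phi_{\varep'}(\cdot,\Omega_{\psi_\ell},A),\mathbf{B}'\rangle\bigr\|_{L^\infty(D(\theta,\psi_\ell))}\le M\theta^{1+\kappa}.$$
Setting $\mathbf{B}_\varep^\ell:=(M/\theta^{\ell\kappa})\mathbf{B}'=C'J\mathbf{B}'$ and using $\Pi_\varep^\ell(y)=\theta^\ell\Phi_{\varep'}(\theta^{-\ell}y,\Omega_{\psi_\ell},A)$, multiplication by $\theta^\ell$ and the substitution $y=\theta^\ell x$ give $\|\tilde u-\theta^{\kappa\ell}\langle\Pi_\varep^\ell,\mathbf{B}_\varep^\ell\rangle\|_{L^\infty(D(\theta^{\ell+1},\psi))}\le\theta^{(\ell+1)(1+\kappa)}J$; restoring the sum produces the desired estimate for $\ell+1$.

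The main obstacle is the inductive setup of the Neumann data: one must maintain throughout the iteration that the corrected datum $\tilde g$ vanishes at the origin, and this is exactly the purpose of the constraint $\langle n(0)\hat A,\mathbf{B}_\varep^j\rangle=0$ and of the carefully engineered choice of $\mathbf{B}_\varep^j$ in (\ref{7.3.3.5}). A secondary difficulty is constant tracking: the factor $\theta^{1+\kappa}$ from Lemma \ref{lemma-7.3} carries an implicit factor $1/2$ of slack (since $2C_2\theta^\kappa\le 1$), and this slack must be large enough to absorb the normalization constant $C'$ coming from $\|\tilde g\|_{C^{0,\eta}}\le CJ$. One ensures this either by shrinking $\theta$ slightly at the outset or by noting that $\kappa=\eta/4$ leaves room to re-derive Lemma \ref{lemma-7.3} with any strictly smaller exponent, giving the exact bound $\theta^{\ell(1+\kappa)}J$ in the statement.
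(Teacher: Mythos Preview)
Your approach is the paper's: induction on $\ell$, subtract $\sum_j\theta^{\kappa j}\langle\Pi_\varep^j,\mathbf B_\varep^j\rangle$, rescale by $\theta^\ell$, and reapply Lemma~\ref{lemma-7.3}; the role of $\langle n(0)\hat A,\mathbf B_\varep^j\rangle=0$ in keeping the rescaled Neumann datum equal to zero at the origin is exactly right.

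The one place where the paper is sharper is the constant issue you flag at the end. Rather than shrinking $\theta$ or adjusting exponents, the paper retains the full bound
\[
\|h\|_{C^{0,\eta}(\Delta(1,\psi_\ell))}\le\theta^{4\ell\kappa}J\Big\{1+\frac{C\|n\|_{C^{0,\eta}(\Delta(1,\psi))}}{1-\theta^\kappa}\Big\}
\]
(your step $\theta^{\ell\eta}\le\theta^{\ell\kappa}$ discards precisely the extra $\theta^{3\ell\kappa}$ needed here), and then performs an \emph{initial dilation} of the domain---legitimate since $\eta<\alpha_0$, so rescaling shrinks $\|n\|_{C^{0,\eta}}$---to make the brace so small that $\theta^\kappa\{\,\cdot\,\}\le 1$. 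This gives $\|h\|_{C^{0,\eta}}\le\theta^{\ell\kappa}J$ on the nose, matching $\|w\|_{L^\infty}\le\theta^{\ell\kappa}J$, and the induction closes with the exact bound $\theta^{(\ell+1)(1+\kappa)}J$ without any stray constant.
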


\begin{proof}
The lemma is proved by an induction argument on $\ell$.
The case  $\ell=1$ follows  by applying Lemma \ref{lemma-7.3} to $u_\varep/J$.
Suppose now that Lemma \ref{lemma-7.4} holds for some $\ell\ge 1$.
Let $\varep<\theta^\ell \varep_0$.
Consider the function
$$
w(x)=\theta^{-\ell}
\left\{ u_\varep (\theta^\ell x)-\sum_{j=0}^{\ell-1} 
\theta^{\kappa j} <\Pi_\varep^j (\theta^\ell x), \mathbf{B}_\varep^j >\right\}
$$
on $D(1, \psi_\ell)$. Note that
 $\mathcal{L}_{\frac{\varep}{\theta^\ell}} (w)=0$ in $D(1, \psi_\ell)$,
$w(0)=0$ and by the induction assumption,
\begin{equation}\label{7.4.2.1}
\| w\|_{L^\infty (D(1, \psi_\ell))} 
\le \theta^{\ell \kappa}J.
\end{equation}
Let 
$$
h(x)=\frac{\partial w}{\partial \nu_{\frac{\varep}{\theta^\ell}}} (x)
\qquad \text{ on } \Delta (1, \psi_\ell).
$$
Then
\begin{equation}\label{7.4.3}
h(x)=g(\theta^\ell x)
-\sum_{j=1}^{\ell-1}
\theta^{\kappa j} < n(\theta^\ell x ) \hat{A}, \mathbf{B}_\varep^j>,
\end{equation}
where $n$ denotes the unit outward normal to $\Delta(1, \psi)$.
It follows that $h(0)=0$. Since $\varep \theta^{-\ell}<\varep_0$,
we may then apply the estimate for the case $\ell=1$ to obtain 
\begin{equation}\label{7.4.4}
\aligned
&
\| w-<\Phi_{\frac{\varep}{\theta^\ell}} (x, \Omega_{\psi_\ell}, A), 
\mathbf{B}_{\frac{\varep}{\theta^\ell}}>\|_{L^\infty (D(\theta, \psi_\ell))}\\
&\qquad
\le \theta^{1+\kappa}
\max \left\{ \| h\|_{C^{0,\eta}(\Delta (1, \psi_\ell))},
\| w\|_{L^\infty (D(1, \psi_\ell))}\right\},
\endaligned
\end{equation}
where $\mathbf{B}_{\frac{\varep}{\theta^\ell}}\in \mathbb{R}^{dm}$ satisfies the conditions
$<n(0)\hat{A}, \mathbf{B}_{\frac{\varep}{\theta^\ell}}>=0$ and
\begin{equation}\label{7.4.4.1}
|\mathbf{B}_{\frac{\varep}{\theta^\ell}}|\le C\max
\left\{ \| h\|_{C^{0,\eta}(\Delta(1, \psi_\ell))},
\| w\|_{L^\infty(D(1, \psi_\ell))}\right\}.
\end{equation}
It follows that
\begin{equation}\label{7.4.5}
\aligned
&
\| u_\varep (x)-\sum_{j=0}^{\ell-1}
\theta^{\kappa j} <\Pi_\varep^j (x), \mathbf{B}_\varep^j>
-\theta^\ell <\Phi_{\frac{\varep}{\theta^\ell}} (\theta^{-\ell} x, \Omega_{\psi_\ell}, A),
\mathbf{B}_{\frac{\varep}{\theta^\ell}}>\|_{L^\infty (D(\theta^{\ell +1}, \psi))}\\
&\qquad
\le \theta^{\ell +1+\kappa}
\max \left\{ \| h\|_{C^{0,\eta}(\Delta(1, \psi_\ell))},
\| w\|_{L^\infty (D(1, \psi_\ell))}\right\}.
\endaligned
\end{equation}

To estimate the right hand side of (\ref{7.4.5}), we observe that
$$
\aligned
\| h\|_{C^{0,\eta}(\Delta (1, \psi_\ell))}
&\le \theta^{\ell \eta} \| g\|_{C^{0,\eta} (\Delta(1, \psi))}
+\sum_{j=0}^{\ell-1} \theta^{\kappa j} \cdot CJ \cdot
\theta^{\ell \eta} \| n\|_{C^{0, \eta}(\Delta(1, \psi))}\\
& \le
\theta^{4\ell \kappa} J \left\{ 1+\frac{C\| n\|_{C^{0,\eta}(\Delta(1, \psi))}}{1-\theta^\kappa}\right\},
\endaligned
$$
since $\eta=4\kappa$.
Since $0<\eta<\alpha_0$,
by making an initial dilation of $x$, if necessary, we may assume that 
$\|n\|_{C^{0,\eta}(\Delta (1,\psi))}$ is small so that
\begin{equation}\label{7.4.6}
\theta^{\kappa}
\left\{ 1+\frac{C\| n\|_{C^{0,\eta}(\Delta(1, \psi))}}{1-\theta^\kappa}\right\}
\le 1.
\end{equation}
This implies that
\begin{equation}\label{7.4.7}
\| h\|_{C^{0,\eta}(\Delta (1, \psi_\ell))}
\le \theta^{\ell \kappa} J.
\end{equation}
This, together with (\ref{7.4.2.1}) and (\ref{7.4.5}), gives
\begin{equation}\label{7.4.8}
\| u_\varep -\sum_{j=0}^{\ell}
\theta^{\kappa j} <\Pi_\varep^j, \mathbf{B}_\varep^j>
\|_{L^\infty (D(\theta^{\ell +1}, \psi))}\\
\le \theta^{(\ell +1)(1+\kappa)} J,
\end{equation}
where we have chosen
 $\mathbf{B}_\varep^\ell =\theta^{-\ell\kappa} \mathbf{B}_{\frac{\varep}{\theta^\ell}}$.
Finally, 
in view of (\ref{7.4.4.1}), (\ref{7.4.2.1}) and (\ref{7.4.7}), we have
$|\mathbf{B}_\varep^\ell |\le C J$.
This completes the induction argument.
\end{proof}

\begin{lemma}\label{lemma-7.6}
Suppose that $\mathcal{L}_\varep (u_\varep) =0$ in $D(1)$
and $\frac{\partial u_\varep}{\partial\nu_\varep} =g$
on $\Delta(1)$.
Then
\begin{equation}\label{7.6.1}
\int_{D(\rho)}
|\nabla u_\varep|^2\, dx
\le C \rho^d \left\{ \| u_\varep\|_{L^\infty (D(1))}^2
+\| g\|_{C^\eta(\Delta(1))}^2 \right\},
\end{equation}
for any $0<\rho<(1/2)$, where $C$ depends only on $\mu$, $\lambda$,
$\tau$, $\eta$ and $(M_0, \alpha_0)$.
\end{lemma}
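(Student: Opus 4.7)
My plan is to combine the approximation of Lemma \ref{lemma-7.4} with the Cacciopoli inequality (\ref{Cacciopoli}). First I normalize so that Lemma \ref{lemma-7.4} applies: replacing $u_\varep$ by $u_\varep(x)-u_\varep(0)-\langle\Phi_\varep(x),\mathbf{c}\rangle$, where $\mathbf{c}\in\mathbb{R}^{dm}$ is chosen so that $n_i(0)\hat{a}_{ij}^{\alpha\beta}c_j^\beta=g^\alpha(0)$ (solvable with $|\mathbf{c}|\le CJ$ by ellipticity of $\hat{A}$), reduces to the case $u_\varep(0)=g(0)=0$. Since $|\nabla\Phi_\varep|\le C$ by Theorem \ref{corrector-theorem}, this normalization preserves $J$ up to a constant and only adds a $CJ^2\rho^d$ term to the gradient integral. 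For $\rho\in[\theta,1/2)$ the conclusion follows from (\ref{Cacciopoli}) applied once, so I focus on $\rho<\theta$, split according to whether $\rho>c_0\varep$ or $\rho\le c_0\varep$, where $c_0=\theta^2/\varep_0$.

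\textbf{Case $\rho>c_0\varep$.} Choose the unique $\ell\ge 1$ with $\theta^{\ell+2}<\rho\le\theta^{\ell+1}$, so that $\varep<\theta^{\ell-1}\varep_0$, and Lemma \ref{lemma-7.4} furnishes the decomposition $u_\varep=v_\ell+w$ with
$$v_\ell=\sum_{j=0}^{\ell-1}\theta^{\kappa j}\langle\Pi_\varep^j,\mathbf{B}_\varep^j\rangle,\qquad\|w\|_{L^\infty(D(\theta^\ell))}\le\theta^{\ell(1+\kappa)}J,\qquad|\mathbf{B}_\varep^j|\le CJ.$$
By Theorem \ref{corrector-theorem} applied to each $\Phi_{\varep/\theta^j}$ on $\Omega_{\psi_j}$ (cf.\ Remark \ref{remark-6.1}), every $\nabla\Pi_\varep^j$ is uniformly bounded; summing the geometric series yields $|\nabla v_\ell|\le CJ$ pointwise. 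Since $\mathcal{L}_\varep w=0$ in $D(1)$ and $\partial w/\partial\nu_\varep=g-n(x)\hat{A}\sum_j\theta^{\kappa j}\mathbf{B}_\varep^j$ is bounded by $CJ$ in $L^\infty(\Delta(1))$, Cacciopoli (\ref{Cacciopoli}) at scale $\theta^\ell$ (with $s=1/2$, $t=1$) gives
$$\int_{D(\theta^\ell/2)}|\nabla w|^2\,dx\le\frac{C}{\theta^{2\ell}}\int_{D(\theta^\ell)}|w|^2\,dx+C\theta^\ell\big\|\tfrac{\partial w}{\partial\nu_\varep}\big\|_{L^2(\Delta(\theta^\ell))}^2\le CJ^2\theta^{\ell d},$$
using $\|w\|_{L^2(D(\theta^\ell))}^2\le CJ^2\theta^{\ell(d+2+2\kappa)}$ and $\|\partial w/\partial\nu_\varep\|_{L^2(\Delta(\theta^\ell))}^2\le CJ^2\theta^{\ell(d-1)}$. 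Combining with the trivial bound $\int_{D(\theta^\ell/2)}|\nabla v_\ell|^2\le CJ^2\theta^{\ell d}$ and the inclusion $D(\rho)\subset D(\theta^{\ell+1})\subset D(\theta^\ell/2)$ (valid since $\theta<1/4$) yields $\int_{D(\rho)}|\nabla u_\varep|^2\le CJ^2\rho^d$.

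\textbf{Case $\rho\le c_0\varep$, and main obstacle.} Here I rescale $y=x/\varep$, $v(y)=u_\varep(\varep y)$; then $\mathcal{L}_1 v=0$ with H\"older coefficients on $D(1/\varep,\psi_\varep)$, where $\psi_\varep(y')=\varep^{-1}\psi(\varep y')$ still satisfies (\ref{psi}), and the Neumann data is $\varep g(\varep y)\in C^\eta$. Since $\rho/\varep\le c_0$ is of order unity and $\mathcal{L}_1$ is a fixed elliptic operator, classical boundary Campanato/Schauder estimates give $\int_{D(\rho/\varep)}|\nabla v|^2\,dy\le C(\rho/\varep)^dJ^2$; undoing the change of variables yields the desired bound. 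The main obstacle throughout is Case 1: one must verify that $|\nabla v_\ell|\le CJ$ \emph{uniformly in $\ell$}, which requires both the geometric decay $\theta^{\kappa j}|\mathbf{B}_\varep^j|\le CJ\theta^{\kappa j}$ from Lemma \ref{lemma-7.4} and the uniform Lipschitz estimate for each rescaled corrector $\Pi_\varep^j$. The latter is precisely Theorem \ref{corrector-theorem} in the form of Remark \ref{remark-6.1}, which is the nontrivial content of Section 7 and ultimately rests on the symmetry hypothesis $A^*=A$ via the Rellich estimate (\ref{Rellich-estimate}).
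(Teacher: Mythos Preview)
Your overall approach mirrors the paper's: the same normalization to $u_\varep(0)=g(0)=0$, the same decomposition from Lemma \ref{lemma-7.4}, the same reliance on Theorem \ref{corrector-theorem} (via Remark \ref{remark-6.1}) for the Lipschitz bound on the $\Pi_\varep^j$, and the same blow-up for $\rho\lesssim\varep$. The organizational difference is that the paper first proves the pointwise bound $\|u_\varep\|_{L^\infty(D(\rho))}\le C\rho J$ (estimate (\ref{7.6.2})) and then applies Cacciopoli once, whereas you estimate $\int_{D(\rho)}|\nabla u_\varep|^2$ directly by splitting $u_\varep=v_\ell+w$. Your Case 1 is correct.

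Your Case 2, however, has a genuine gap. With $v(y)=u_\varep(\varep y)$ one has $\nabla_x u_\varep=\varep^{-1}\nabla_y v$, so
\[
\int_{D(\rho)}|\nabla u_\varep|^2\,dx=\varep^{d-2}\int_{D(\rho/\varep,\psi_\varep)}|\nabla v|^2\,dy.
\]
The classical estimate you invoke gives only $|\nabla v|\le C\big\{\|v\|_{L^\infty(D(2c_0,\psi_\varep))}+\|\varep g(\varep\cdot)\|_{C^\eta}\big\}$ on $D(c_0,\psi_\varep)$; since you use merely the trivial bound $\|v\|_{L^\infty}\le\|u_\varep\|_{L^\infty(D(1))}\le J$, this yields $\int_{D(\rho/\varep)}|\nabla v|^2\le C(\rho/\varep)^d J^2$ and hence $\int_{D(\rho)}|\nabla u_\varep|^2\le C\rho^d\varep^{-2}J^2$, off by a factor $\varep^{-2}$. ``Undoing the change of variables'' does \emph{not} give the desired bound. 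The missing input is $\|u_\varep\|_{L^\infty(D(2c_0\varep))}\le C\varep J$. This estimate is in fact implicit in your Case~1 argument (since $v_\ell(0)=0$ and $|\nabla v_\ell|\le CJ$ give $\|v_\ell\|_{L^\infty(D(\theta^\ell))}\le C\theta^\ell J$, while $\|w\|_{L^\infty(D(\theta^\ell))}\le\theta^{\ell(1+\kappa)}J$), but you must extract it explicitly and feed it into the blow-up. This is precisely why the paper organizes the proof around the $L^\infty$ estimate (\ref{7.6.2}) first: that bound at scale $\rho\approx\varep/\varep_0$ is what makes the small-$\rho$ case close.
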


\begin{proof}
By subtracting a constant we may assume that $u_\varep (0)=0$.
We may also assume that $g(0)=0$. To see this, consider
$$
v^\alpha_\varep (x)
=u_\varep^\alpha (x)-\Phi_{\varep, j}^{\alpha\beta} (x) n_j(0) b^\beta,
$$
where $(b^\beta)\in \br^{m}$ solves the linear system 
$n_i(0)n_j(0)\hat{a}^{\alpha\beta}_{ij} b^\beta
=g^\alpha (0)$.
Then $\mathcal{L}_\varep (v_\varep)=0$ in $D(1)$, $v_\varep (0)=0$ and
$$
\left(\frac{\partial v_\varep}{\partial\nu_\varep}\right)^\alpha (x)
=g^\alpha (x) -n_i(x)\hat{a}_{ij}^{\alpha\beta} n_j(0)b^\beta
\qquad \text{ on } \Delta (1).
$$
Thus $\frac{\partial v_\varep}{\partial\nu_\varep} (0)=0$.
Since $\|\Phi_\varep \|_{L^\infty(D(1))}
+\|\nabla \Phi_\varep\|_{L^\infty(D(1))}\le C$, the desired estimate for $u_\varep$ 
follows from the corresponding estimate for $v_\varep$.

Under the assumption that $u_\varep (0)=g(0)=0$, we will show that
\begin{equation}\label{7.6.2}
\| u_\varep\|_{L^\infty(D(\rho))}
\le C 
\rho \left\{ \| u_\varep\|_{L^\infty (D(1))}
+\| g\|_{C^\eta(\Delta(1))} \right\},
\end{equation}
for any $0<\rho<(1/2)$.
Estimate (\ref{7.6.1}) follows from (\ref{7.6.2}) 
by Cacciopoli's inequality (\ref{Cacciopoli}).

Let $\kappa$, $\varep_0$, $\theta$ be the constants given by Lemma \ref{lemma-7.3}.
Let $0<\varep<\theta\varep_0$ (the case $\varep\ge \theta\varep_0$
follows from the classical regularity estimates).
Suppose that
$$
\theta^{i+1} \le \frac{\varep}{\varep_0} <\theta^i
\qquad \text { for some } i\ge 1.
$$
Let $\rho\in (0, 1/2)$. We first consider the case $\frac{\varep}{\varep_0}
\le \rho<\theta$. 
Then $\theta^{\ell+1}\le \rho<\rho^\ell$ for some $\ell=1,\dots, i$.
It follows that
\begin{equation}\label{7.6.3}
\aligned
&\| u_\varep\|_{L^\infty (D(\rho))}
 \le \| u_\varep\|_{L^\infty (D(\theta^\ell))}\\
& \le \| u_\varep -\sum_{j=0}^{\ell-1} \theta^{\kappa j}
<\Pi_\varep^j, \mathbf{B}_\varep^j >\|_{L^\infty (D(\theta^\ell))}
+\sum_{j=0}^{\ell-1} \theta^{\kappa j} |\mathbf{B}_\varep^j|
\| \Pi_\varep^j\|_{L^\infty (D(\theta^\ell))}\\
&\le \theta^{\ell (1+\kappa)} J
+CJ \sum_{j=0}^{\ell-1} \theta^{\kappa j}\|\Pi_\varep^j\|_{L^\infty (D(\theta^\ell))},
\endaligned
\end{equation}
where $J=\max \big\{ \| g\|_{C^{0,\eta}(D(1))}, \| u_\varep\|_{L^\infty(D(1))}\big\}$ and
we have used Lemma \ref{lemma-7.4}.
Recall that $\Pi_\varep^j (x)=\theta^j \Phi_{\frac{\varep}{\theta^j}} (\theta^{-j} x,
\Omega_{\psi_j}, A)$.
By Remark \ref{remark-6.1} we have
$\Pi_\varep^j (0)=0$ and $\|\nabla \Pi_\varep^j \|_{L^\infty(D(1))}\le C$. Hence,
$$
\| \Pi_\varep^j \|_{L^\infty(D(\theta^\ell))} \le C\theta^\ell.
$$
This, together with (\ref{7.6.3}), gives $\| u_\varep\|_{L^\infty(D(\rho))}
\le C\rho J$ for any $\frac{\varep}{\varep_0}\le \rho<\frac12$ (the case $\theta\le \rho<(1/2)$ is 
trivial).

To treat the case $0<\rho<\frac{\varep}{\varep_0}$,  we use a blow-up argument.
Let $w(x)=\varep^{-1} u_\varep (\varep x)$.
Then $\mathcal{L}_1 (w) =0$ in $D(2\varep_0^{-1}, \psi_\varep)$ and
$\frac{\partial w}{\partial\nu_1} (x) =g(\varep x)$ on
$\Delta (2\varep_0^{-1}, \psi_\varep)$,
where $\psi_\varep (x^\prime) =\varep^{-1}\psi(\varep x^\prime)$.
By the classical regularity estimate,
$$
\|\nabla w\|_{L^\infty (D(\frac{1}{\varep_0}, \psi_\varep))}
\le C\left\{
\| w\|_{L^\infty(D(\frac{2}{\varep_0}, \psi_\varep))}
+\|\frac{\partial w}{\partial\nu_1}\|_{C^\eta (\Delta (\frac{2}{\varep_0}, \psi_\varep))}\right\}.
$$
It follows that
$$
\|\nabla u_\varep\|_{L^\infty (D(\frac{\varep}{\varep_0}))}
 \le C
\left\{
\varep^{-1}
\| u\|_{L^\infty(D(\frac{2\varep}{\varep_0}))}
+\| g\|_{C^\eta (\Delta(1))}\right\}\\
\le C J,
$$
where we have used the estimate (\ref{7.6.2}) with $\rho=\frac{2\varep}{\varep_0}$
for the last inequality.
Finally, since $u_\varep (0)=0$, for $0<\rho<\frac{\varep}{\varep_0}$, we obtain
$$
\| u_\varep\|_{L^\infty (D(\rho))}
\le C\rho \|\nabla u_\varep\|_{L^\infty(D(\frac{\varep}{\varep_0}))}
\le C\rho J.
$$
This completes the proof of (\ref{7.6.2}).
\end{proof}

\noindent{\bf Proof of Theorem \ref{boundary-Lipschitz-theorem}.}
By rescaling we may assume that $\rho=1$.
By a change of the coordinate system, we may deduce from Lemma \ref{lemma-7.6} that
if $P\in \partial\Omega$, $|P-Q|<\frac12$ and $0<r<\frac14$,
$$
\int_{B(P, r)\cap\Omega}
|\nabla u_\varep|^2\, dx
\le C r^d
\left\{ \| u_\varep\|^2_{L^\infty (B(Q,1)\cap \Omega)}
+\| g\|^2_{C^\eta(B(Q, 1)\cap\partial\Omega)}\right\},
$$
where $C$ depends only on $d$, $m$, $\mu$, $\lambda$, $\tau$, $\eta$ and $\Omega$.
This, together with the interior estimate (\ref{interior-estimate}),
implies that
$$
\|\nabla u_\varep\|_{L^\infty(B(Q,\frac12)\cap \Omega)}
\le C \left\{ 
\| u_\varep\|_{L^\infty (B(Q,1)\cap \Omega)}
+\| g\|_{C^\eta(B(Q, 1)\cap\partial\Omega)}\right\}.
$$
The proof of Theorem \ref{boundary-Lipschitz-theorem} is now complete.
\qed

\section{Proof of Theorem \ref{Lipschitz-estimate-theorem}}

Under the condition $A\in \Lambda (\lambda, \mu, \tau)$,
we have proved in Section 5 that
\begin{equation}\label{8.1}
 |N_\varep (x,y)|\le \frac{C}{|x-y|^{d-2}} \qquad  \text{ if } d\ge 3.
\end{equation}
With the additional assumption $A^*=A$, we may use Theorem \ref{boundary-Lipschitz-theorem}
to show that for $d\ge 3$,
\begin{equation}\label{8.2}
\aligned
 |\nabla_x N_\varep (x,y)|+|\nabla_y N_\varep (x,y)|
 & \le \frac{C}{|x-y|^{d-1}},\\
|\nabla_x\nabla_y N_\varep (x,y)| &\le \frac{C}{|x-y|^d}.
\endaligned 
\end{equation}
If $d=2$, one obtains $|N_\varep (x,,y)|\le C_\gamma |x-y|^{-\gamma}$
and $|\nabla_x N_\varep (x,y)| +|\nabla_y N_\varep (x,y)|\le
C_\gamma |x-y|^{-1-\gamma}$ for any $\gamma>0$ (this is not sharp, but sufficient for
the proof of Theorem \ref{Lipschitz-estimate-theorem}).
Now, given $F\in L^q(\Omega)$ for some $q>d$, let
$$
v_\varep (x)=\int_\Omega N_\varep (x,y) F(y)\, dy.
$$
Then $\mathcal{L}_\varep (v_\varep)=F$ in $\Omega$ and 
$\frac{\partial v_\varep}{\partial\nu_\varep}=-\frac{1}{|\partial\Omega|}\int_\Omega F$
on $\partial\Omega$.
Furthermore, it follows from pointwise estimates on $|\nabla_x N_\varep (x,y)|$ that
$\|\nabla v_\varep\|_{L^\infty(\Omega)}\le C \, \| F\|_{L^q(\Omega)}$.
Thus, by subtracting $v_\varep$ from $u_\varep$, we may assume that $F=0$ 
in Theorem \ref{Lipschitz-estimate-theorem}.
In this case we may deduce
from Theorems \ref{boundary-Lipschitz-theorem} and \ref{boundary-holder-theorem} that
for $Q\in \partial\Omega$,
\begin{equation}\label{8.3}
\|\nabla u_\varep\|_{L^\infty (B(Q, \rho/2)\cap \Omega)}
\le C\left\{
\left(\average_{B(Q,\rho)\cap\Omega}|\nabla u_\varep|^2\right)^{1/2}
+\| g\|_{C^\eta (\Delta (Q, \rho))}\right\},
\end{equation}
where $C$ depends only on $d$, $m$,
$\mu$, $\lambda$, $\tau$, $\eta$ and $\Omega$.
Since $\|\nabla u_\varep\|_{L^2(\Omega)} \le C\| g\|_{L^2(\partial\Omega)}$,
the estimate $\| \nabla u_\varep\|_{L^\infty(\Omega)}
\le C\| g\|_{C^\eta (\partial\Omega)}$
 follows from (\ref{8.3}) and the interior estimate (\ref{interior-estimate})
by a covering argument.

\section{Proof of Theorem \ref{maximal-function-theorem}}

As we mentioned in Section 1, the case $p=2$ is proved in \cite{Kenig-Shen-2}
(for Lipschitz domains).
To handle the case $p>2$, we need the following weak reverse H\"older inequality.

\begin{lemma}\label{lemma-9.1}
Let $\Omega$ be a bounded $C^{1,\alpha_0}$ domain.
Suppose that $A\in \Lambda(\lambda, \mu, \tau)$ and $A^*=A$.
Then, for $Q\in \partial\Omega$ and $0<r<r_0$,
\begin{equation}\label{9.1.1}
\sup_{B(Q,r)\cap\partial\Omega}
(\nabla u_\varep)^*
\le C\left\{ \average_{B(Q,2r)\cap\partial\Omega} |(\nabla u_\varep)^*|^2\, d\sigma\right\}^{1/2} ,
\end{equation}
where $u_\varep\in W^{1,2}(B(Q,3r)\cap\Omega)$
is a weak solution to $\mathcal{L}_\varep (u_\varep)=0$ in $B(Q,3r)\cap\Omega$
with either $\frac{\partial u_\varep}{\partial\nu_\varep}=0$ or $u_\varep=0$
on $B(Q,3r)\cap \partial\Omega$.
\end{lemma}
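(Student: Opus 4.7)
The plan is to combine the uniform boundary Lipschitz estimate (Theorem \ref{boundary-Lipschitz-theorem} in the Neumann case, together with the Avellaneda--Lin boundary Lipschitz estimate \cite{AL-1987} in the Dirichlet case) with the boundary H\"older estimate (Theorem \ref{boundary-holder-theorem}) and Poincar\'e's inequality to obtain a self-improving pointwise bound on $|\nabla u_\varep|$, and then to convert the resulting solid $L^2$ average into a boundary $L^2$ average of the nontangential maximal function via normal coordinates near $\partial\Omega$.

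Fix $P_0\in B(Q,r)\cap\partial\Omega$ and a large constant $K$ to be chosen. Since the relevant boundary data vanishes on $B(Q,3r)\cap\partial\Omega$, Theorem \ref{boundary-Lipschitz-theorem} applied on $B(P_0,r/K)$ gives
$$\|\nabla u_\varep\|_{L^\infty(B(P_0,r/(2K))\cap\Omega)}\le \frac{CK}{r}\,\|u_\varep-c\|_{L^\infty(B(P_0,r/K)\cap\Omega)},$$
where $c$ is any constant (take $c=0$ in the Dirichlet case, since $u_\varep=0$ on the boundary). Theorem \ref{boundary-holder-theorem} with vanishing boundary data converts the $L^\infty$ norm on the right into an $L^2$ average, and choosing $c$ to be the solid mean of $u_\varep$ over $B(P_0,r/2)\cap\Omega$ and invoking Poincar\'e yields
$$\|\nabla u_\varep\|_{L^\infty(B(P_0,r/(2K))\cap\Omega)}\le C\left(\average_{B(P_0,r/2)\cap\Omega}|\nabla u_\varep|^2\right)^{1/2}.$$
For $y\in\gamma(P_0)$ with $|y-P_0|\le r/(2K)$ this directly bounds $|\nabla u_\varep(y)|$; for $y\in\gamma(P_0)$ with $|y-P_0|>r/(2K)$ the cone condition forces $\delta(y)>r/(2KM_0)$, and Theorem \ref{interior-estimate-theorem} applied on $B(y,cr)\subset B(P_0,r/2)\cap\Omega$ absorbs $|\nabla u_\varep(y)|^2$ into the same solid $L^2$ average (up to a constant depending on $K$). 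Hence
$$(\nabla u_\varep)^*(P_0)^2\le C\average_{B(P_0,r/2)\cap\Omega}|\nabla u_\varep|^2.$$

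To pass to a boundary average I use the normal-coordinate parametrization available in a $C^{1,\alpha_0}$ domain: every $x$ in a tubular neighborhood of $\partial\Omega$ writes uniquely as $x=P+t\nu(P)$ with $P\in\partial\Omega$ its projection and $t=\delta(x)$; in particular $x\in\gamma(P)$, so $|\nabla u_\varep(x)|\le(\nabla u_\varep)^*(P)$. Fubini (with Jacobian $1+O(t)$) gives
$$\int_{B(P_0,r/2)\cap\Omega}|\nabla u_\varep|^2\,dx \le Cr\int_{B(P_0,r)\cap\partial\Omega}|(\nabla u_\varep)^*|^2\,d\sigma.$$
Dividing by $|B(P_0,r/2)\cap\Omega|\asymp r^d$ and using $B(P_0,r)\subset B(Q,2r)$ produces
$$(\nabla u_\varep)^*(P_0)^2\le C\average_{B(Q,2r)\cap\partial\Omega}|(\nabla u_\varep)^*|^2\,d\sigma,$$
and taking the supremum over $P_0\in B(Q,r)\cap\partial\Omega$ completes the proof.

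The main obstacle is the bookkeeping required to keep every ball inside $B(Q,3r)\cap\Omega$ (where $u_\varep$ satisfies the equation) and to control the ``deep'' part of the cone $\gamma(P_0)$ implicit in the nontangential maximal function; both issues are handled by choosing $K$ large and appealing to the interior regularity of Theorem \ref{interior-estimate-theorem}, together with the natural truncation of the cone to the domain of definition of $u_\varep$. For the Dirichlet case the modification is routine: the condition $u_\varep=0$ on the boundary removes the need to subtract a constant (and supplies the Poincar\'e inequality without mean-zero normalization), while Theorems \ref{boundary-Lipschitz-theorem} and \ref{boundary-holder-theorem} are replaced by the corresponding Dirichlet estimates in \cite{AL-1987}.
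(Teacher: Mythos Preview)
Your proof is correct and follows essentially the same strategy as the paper: split the cone at each boundary point into a near part (controlled by the boundary Lipschitz estimate, which the paper invokes via Theorem~\ref{boundary-Lipschitz-theorem} for Neumann and \cite{AL-1987} for Dirichlet, and which you unpack more explicitly as Lipschitz $+$ H\"older $+$ Poincar\'e) and a far part (controlled by the interior estimate), then convert the resulting solid $L^2$ average of $\nabla u_\varep$ into a boundary $L^2$ average of $(\nabla u_\varep)^*$. The paper compresses these steps into the chain (\ref{9.1.2}) and the one-line remark about $\mathcal{M}_{r,2}$, whereas you spell out the normal-coordinate Fubini argument and the Poincar\'e step; the content is the same.
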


\begin{proof}
Recall that the nontangential maximal function of $(\nabla u_\varep)^*$ is defined by
$$
(\nabla u_\varep)^*(P)
=\sup \big\{
|\nabla u_\varep (x)|:\
x\in\Omega \text{ and } |x-P|< C_0 \, \text{dist}(x, \partial\Omega)\big\},
$$
for $P\in \partial\Omega$,
where $C_0=C(\Omega)>1$ is sufficiently large.
Note that $$
(\nabla u_\varep)^*(P)= \max\big\{ 
\mathcal{M}_{r,1} (\nabla u_\varep), \mathcal{M}_{r,2} (\nabla u_\varep)\big\},
$$ 
where
$$
\aligned
& \mathcal{M}_{r,1} (\nabla u_\varep) (P)
=\sup \big\{
|\nabla u_\varep (x)|:\
x\in\Omega, \ |x-P|\le c_0r \text{ and } |x-P|< C_0 \, \text{dist}(x, \partial\Omega)\big\},\\
& \mathcal{M}_{r,2} (\nabla u_\varep) (P)
=\sup \big\{
|\nabla u_\varep (x)|:\
x\in\Omega, \ |x-P|>c_0r \text{ and } |x-P|< C_0 \, \text{dist}(x, \partial\Omega)\big\},
\endaligned
$$
and $c_0=c(\Omega)>0$ is sufficiently small.
Using interior estimate (\ref{interior-estimate}), it is easy to see that
$\sup_{B(Q,r)\cap\partial\Omega} \mathcal{M}_{r,2} (\nabla u_\varep)$ is 
bounded by the right hand side of (\ref{9.1.1}).  
To estimate $\mathcal{M}_{r,1}(\nabla u_\varep)$,
we observe that
\begin{equation}\label{9.1.2}
\aligned
\sup_{B(Q,r)\cap\partial\Omega} \mathcal{M}_{r,1} (\nabla u_\varep)
& \le \sup_{B(Q,3r/2)\cap\Omega} |\nabla u_\varep|\\
&\le C\, \left\{ \average_{B(Q,2r)\cap\Omega} |\nabla u_\varep|^2\, dx \right\}^{1/2} \\
&\le C\, \left\{ \average_{B(Q,2r)\cap\partial\Omega}
|(\nabla u_\varep)^*|^2\, d\sigma\right\}^{1/2}.
\endaligned
\end{equation}
We point out that the second inequality in (\ref{9.1.2}) follows from 
the boundary Lipschitz estimate. For Neumann condition $\frac{\partial u_\varep}
{\partial\nu_\varep}=0$ on $B(Q,3r)\cap\Omega$, the estimate was given
by Theorem \ref{boundary-Lipschitz-theorem}, while the case of Dirichlet condition 
follows from Theorem 2 in \cite[p.805]{AL-1987}.
\end{proof}

\begin{lemma}\label{lemma-9.2} 
Suppose that $A\in\Lambda (\lambda, \mu, \tau)$ and $A^*=A$.
Let $p>2$ and $\Omega$ be a bounded Lipschitz domain.
Assume that
\begin{equation}\label{9.2.1}
\left(\average_{B(Q,r)\cap\partial\Omega} |(\nabla u_\varep)^*|^p\, d\sigma\right)^{1/p}
\le
C\, \left(\average_{B(Q,2r)\cap\partial\Omega} |(\nabla u_\varep)^*|^2
\, d\sigma\right)^{1/2},
\end{equation}
whenever $u_\varep \in W^{1,2}(B(Q,3r)\cap \Omega)$
is a weak solution to $\mathcal{L}_\varep (u_\varep)=0$ in $B(Q, 3r)\cap\Omega$
and $\frac{\partial u_\varep}{\partial\nu_\varep} =0$ on $B(Q,3r)\cap\partial\Omega$
for some $Q\in \partial\Omega$ and $0<r<r_0$.
Then the weak solutions to $\mathcal{L}_\varep(u_\varep)=0$ in $\Omega$
and $\frac{\partial u_\varep}{\partial\nu_\varep}=g\in L^p(\partial\Omega)$
satisfy the estimate $\|(\nabla u_\varep)^*\|_{L^p(\partial\Omega)}
 \le C\, \| g\|_{L^p(\partial\Omega)}$.
\end{lemma}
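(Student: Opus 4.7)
The plan is to apply the real variable method developed by Shen in \cite{Shen-2005-bounds, Shen-2006-ne, Shen-2007-boundary}, which upgrades $L^2$ boundedness of the sublinear map $T: g \mapsto (\nabla u_\varep)^*$ together with the localized reverse H\"older inequality (\ref{9.2.1}) into an $L^p$ bound for all exponents up to the one supplied by (\ref{9.2.1}). The $L^2$ bound $\|T g\|_{L^2(\partial\Omega)} \le C \|g\|_{L^2(\partial\Omega)}$ is precisely the case $p = 2$ of Theorem \ref{maximal-function-theorem}, established in \cite{Kenig-Shen-2} for Lipschitz domains under the present hypotheses $A \in \Lambda(\mu,\lambda,\tau)$ and $A^*=A$. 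Thus the entire proof amounts to verifying the abstract hypotheses of Shen's theorem for $T$.

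Concretely, fix a boundary ball $B = B(Q, r)$ with $Q \in \partial\Omega$ and $0 < r < c r_0$. Let $\eta \in C_0^\infty(\br^d)$ be a cutoff equal to $1$ on $4B$ and supported in $8B$, and decompose the data as
$$
\tilde g_1 = g \eta - \frac{1}{|\partial\Omega|} \int_{\partial\Omega} g \eta\, d\sigma, \qquad
\tilde g_2 = g - \tilde g_1,
$$
so that both $\tilde g_1$ and $\tilde g_2$ have mean value zero. Let $v_\varep$, $w_\varep$ be the corresponding weak solutions of the Neumann problem, so that $u_\varep = v_\varep + w_\varep$ modulo a constant and $(\nabla u_\varep)^* \le (\nabla v_\varep)^* + (\nabla w_\varep)^*$ pointwise on $\partial\Omega$. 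The local piece is controlled by the $L^2$ boundedness of $T$:
$$
\|(\nabla v_\varep)^*\|_{L^2(\partial\Omega)} \le C\|\tilde g_1\|_{L^2(\partial\Omega)} \le C\|g\|_{L^2(8B\cap\partial\Omega)}.
$$

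For the non-local piece, on $4B \cap \partial\Omega$ the data $\tilde g_2$ equals a constant $c_B$ with $|c_B| \le C |\partial\Omega|^{-1/2} \|g\|_{L^2(8B\cap\partial\Omega)}$. Subtracting from $w_\varep$ the Neumann solution with this constant data (a function whose nontangential maximal function is $O(r^{(d-1)/2} \|g\|_{L^2(8B\cap\partial\Omega)} / |\partial\Omega|^{1/2})$ on $B\cap\partial\Omega$, hence negligible in Shen's framework) reduces to a function $w_\varep^\prime$ with $\mathcal{L}_\varep (w_\varep^\prime)=0$ in $\Omega$ and $\frac{\partial w_\varep^\prime}{\partial\nu_\varep}=0$ on $2B \cap \partial\Omega$. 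Hypothesis (\ref{9.2.1}) then delivers the reverse H\"older inequality at scale $r$ for $(\nabla w_\varep^\prime)^*$ in terms of its $L^2$ average over $2B\cap\partial\Omega$.

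With this pair of local estimates verified on every boundary ball, Shen's real variable theorem concludes that $\|(\nabla u_\varep)^*\|_{L^p(\partial\Omega)} \le C\|g\|_{L^p(\partial\Omega)}$ for all $2 < p < p_0$, where $p_0$ is the exponent supplied by (\ref{9.2.1}). The main obstacle is the bookkeeping forced by the mean-value-zero compatibility condition for the Neumann problem: since the cutoff $\eta$ does not preserve mean zero, one must carry the correction constants $c_B$ throughout the argument and check that their contribution is small enough (in terms of a local $L^2$ average of $g$) to be absorbed into the $L^2$ side of Shen's decomposition. Once this is settled, the proof is essentially mechanical application of the real variable theorem.
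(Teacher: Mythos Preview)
Your approach is correct and matches the paper's: both invoke Shen's real variable method (originating in \cite{Caffarelli-1998} and developed in \cite{Shen-2005-bounds, Shen-2006-ne, Shen-2007-boundary}) with the $L^2$ solvability from \cite{Kenig-Shen-2} as the base case; the paper simply cites \cite{Kim-Shen} for the details you sketch and omits the bookkeeping. One small slip: the ``Neumann solution with constant data $c_B$'' you propose subtracting does not exist, since nonzero constant data fails the compatibility condition; the standard remedy (as in \cite{Kim-Shen}) is to solve instead with data $h_B$ equal to $c_B$ on $3B$ but adjusted off $3B$ to have mean zero, and then $\|(\nabla\psi_\varep)^*\|_{L^2(\partial\Omega)} \le C|c_B|\,|\partial\Omega|^{1/2}$ is absorbed into the $F_B$ side exactly as you anticipate in your final paragraph.
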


\begin{proof} This follows by a real variable argument originating
 in \cite{Caffarelli-1998} and
further developed in \cite{Shen-2005-bounds, Shen-2006-ne, Shen-2007-boundary}.
In \cite{Kim-Shen} the argument was used to prove that for any given $p>2$ and
Lipschitz domain $\Omega$, the solvability of the Neumann problem for Laplace's equation
$\Delta u=0$ in $\Omega$ with $L^p$ boundary data is equivalent to a weak reverse H\"older inequality,
similar to (\ref{9.2.1}).
With the solvability of the $L^2$ Neumann problem for $\mathcal{L}_\varep (u_\varep)
=0$ \cite{Kenig-Shen-2} and interior estimate (\ref{interior-estimate}), the proof of the sufficiency
of the weak reverse H\"older inequality in \cite[pp.1819-1821]{Kim-Shen} extends directly
to the present case. We omit the details.
\end{proof}

It follows from Lemmas \ref{lemma-9.1} and \ref{lemma-9.2} that
Theorem \ref{maximal-function-theorem} holds for $p>2$.
To handle the case $1<p<2$, as in the case of
Laplace's equation \cite{Dahlberg-Kenig-1987},
one considers the solutions of the $L^2$ Neumann problem
with atomic data $\frac{\partial u_\varep}{\partial \nu_\varep} =a$, where
$\int_{\partial\Omega} a=0$,
supp$(a)\subset B(Q,r)\cap\partial\Omega$ for some $Q\in \partial\Omega$ and
$0<r<r_0$, and $\|a\|_{L^\infty(\partial\Omega)}\le r^{1-d}$.
One needs to show that
\begin{equation}\label{9.3.1}
\int_{\partial\Omega} (\nabla u_\varep)^*\, d\sigma \le C.
\end{equation}
The case $1<p<2$ follows from (\ref{9.3.1}) by interpolation.

To prove (\ref{9.3.1}), one first uses the H\"older inequality and the $L^2$
estimate $\|(\nabla u_\varep)^*\|_{L^2(\partial\Omega)}\le C\, \| a\|_{L^2(\partial\Omega)}
\le C r^{\frac{1-d}{2}}$ to see that 
\begin{equation}\label{9.3.2}
\int_{B(Q, Cr)\cap\partial\Omega} (\nabla u_\varep)^*\, d\sigma \le C.
\end{equation}
Next, to estimate $(\nabla u)^*$ on $\partial\Omega\setminus B(Q, Cr)$,
we show that
\begin{equation}\label{9.3.3}
\int_{B(P_0, c\rho)\cap\partial\Omega}
(\nabla u_\varep)^*\, d\sigma \le C\left(\frac{r}{\rho}\right)^\gamma,
\end{equation}
for some $\gamma>0$, where $\rho=|P_0-Q|\ge Cr$. Note that
\begin{equation}\label{9.3.4}
u_\varep (x)
=b +\int_{B(Q,r)\cap\partial\Omega}
\big\{ N_\varep (x,y)-N_\varep (x,Q)\big\} a(y)\, d\sigma (y)
\end{equation}
for some $b\in \br^m$. It follows that
\begin{equation}\label{9.3.5}
|\nabla u_\varep (x)|
\le C
\average_{B(Q,r)\cap\partial\Omega}
\big| \nabla_x \big\{ N_\varep (x,y)-N_\varep (x,Q)\big\}\big|\, d\sigma (y).
\end{equation}
Hence, if $z\in \Omega$ and 
$c\rho\le |z-P|<C_0\delta(z)$ for some $P\in B(P_0, c\rho)\cap
\partial\Omega$, 
$$
\aligned
|\nabla u_\varep (z)|
&\le C\left(\average_{B(z, c\delta(z))} |\nabla u(x)|^2\, dx\right)^{1/2}\\
& \le C \average_{B(Q,r)\cap\partial\Omega}
\left(\average_{B(z,c\delta (z))} |\nabla_x \big\{
N_\varep (x,y)-N_\varep (x,Q)\big\}|^2\, dx \right)^{1/2} d\sigma (y)\\
&\le C\rho^{1-d} \left(\frac{r}{\rho}\right)^\gamma,
\endaligned
$$
where $\delta (z)=\text{dist}(z, \partial\Omega)$
and
we have used the interior estimate, Minkowski's inequality and Theorem \ref{Neumann-theorem-5.2}.
This implies that
\begin{equation}\label{9.3.6}
\int_{B(P_0, c\rho)\cap \partial\Omega}
\mathcal{M}_{2, \rho} (\nabla u_\varep)\, d\sigma \le C\left(\frac{r}{\rho}\right)^\gamma.
\end{equation}

Finally, to estimate $\mathcal{M}_{1,\rho} (\nabla u_\varep)$, we note that the $L^2$ nontangential
maximal function estimate, together with an integration argument, gives
\begin{equation}\label{9.3.7}
\int_{B(P_0, c\rho)\cap\partial\Omega}
|\mathcal{M}_{1,\rho} (\nabla u_\varep)|^2\, d\sigma
\le \frac{C}{\rho}
\int_{B(P_0, 2c\rho)\cap\Omega} |\nabla u_\varep|^2\, dx,
\end{equation}
(see \cite{Dahlberg-Kenig-1987} for the case of Laplace's equation).
It follows by H\"older inequality that
\begin{equation}\label{9.3.8}
\aligned
\int_{B(P_0, c\rho)\cap\partial\Omega}
\mathcal{M}_{1,\rho} (\nabla u_\varep)\, d\sigma
& \le C\rho^{d-1} \left(\average_{B(P_0, 2c\rho)\cap\Omega} |\nabla u_\varep|^2\, dx\right)^{1/2}\\
&\le C\left(\frac{r}{\rho}\right)^\gamma,
\endaligned
\end{equation}
where the last inequality follows from (\ref{9.3.5}) and Theorem \ref{Neumann-theorem-5.2}.
In view of (\ref{9.3.6}) and (\ref{9.3.8}), we have proved (\ref{9.3.2}). The desired 
estimate 
$$
\int_{\partial\Omega\setminus B(Q, Cr)} 
(\nabla u_\varep)^*\, d\sigma \le C
$$
follows from (\ref{9.3.2}) by a simple covering argument.
This completes the proof of (\ref{9.3.1}) and hence of Theorem \ref{maximal-function-theorem}.
\qed

\begin{remark}\label{remark-9.0}
{\rm 
The estimate $\|\nabla u_\varep\|_{L^q(\Omega)} \le C \| g\|_{L^p(\partial\Omega)}$
with $q=\frac{pd}{d-1}$ in Theorem \ref{maximal-function-theorem}
follows from Theorem \ref{W-1-p-theorem},
using the fact that $L^p(\partial\Omega)\subset B^{-\frac{1}{q}, q}(\partial\Omega)$.
The estimate also follows from the observation that $\|w\|_{L^q(\Omega)}
\le C\| (w)^*\|_{L^p(\partial\Omega)}$ for any $w$ 
in a Lipschitz domain $\Omega$. To see this, we note that
\begin{equation}\label{9.0.1}
|w(x)|\le C\int_{\partial\Omega} 
\frac{ (w)^* (Q)}{|x-Q|^{d-1}}\, d\sigma (Q).
\end{equation}
By a duality argument, it then suffices to show that the operator
$$
I_1( f) (x) =\int_{\Omega} \frac{ f(y)}{|x-y|^{d-1}} \, dy
$$ is bounded from $L^{q^\prime}(\Omega)$ to $L^{p^\prime}(\partial\Omega)$.
This may be proved by using fractional and singular integral estimates
(see e.g. \cite[p.712]{Shen-2006-ne}).
}
\end{remark}

\begin{remark}\label{remark-9.1}
{\rm Suppose that $d \ge 3$. For $g\in L^p(\partial\Omega)$,
consider the $L^p$ Neumann problem in the exterior domain
$\Omega_-=\br^d\setminus \overline{\Omega}$,
\begin{equation}\label{exterior-Neumann}
\left\{\aligned
\mathcal{L}_\varep (u_\varep) & =0 \quad \text{ in } \Omega_-,\\
\frac{\partial u_\varep}{\partial \nu_\varep} & =g \quad \text{ on } \partial\Omega,\\
(\nabla u_\varep)^* & \in L^p(\partial\Omega) \text{ and }
u_\varep (x)  =O(|x|^{2-d}) \quad \text{ as } |x|\to\infty.
\endaligned
\right.
\end{equation} 
It follows from \cite{Kenig-Shen-2} that if $p=2$ and $\Omega$ is a bounded Lipschitz domain with connected
boundary, the unique solution to (\ref{exterior-Neumann}) satisfies
the estimate $\| (\nabla u_\varep)^*\|_{L^2(\partial\Omega)}
\le C\, \| g\|_{L^2(\partial\Omega)}$
(if  $\partial\Omega$ is not connected, the data $g$ needs to satisfy some compatibility
conditions).
An careful inspection of Theorem \ref{maximal-function-theorem}
shows that the $L^2$ results extend to $L^p$ for $1<p<\infty$, 
if $\Omega$ is a bounded $C^{1, \alpha}$
domain.
}
\end{remark}

\section{$L^p$ Regularity problem}

In this section we outline the proof of the following.

\begin{thm}\label{regularity-theorem}
Suppose that $A\in \Lambda (\mu, \lambda, \tau)$ and $A^*=A$.
Let $\Omega$ be a bounded $C^{1, \alpha}$ domain with connected boundary and $1<p<\infty$.
Then, for any $f\in W^{1,p}(\partial\Omega)$, the unique solution to
$\mathcal{L}_\varep (u_\varep) =0$ in $\Omega$, $u_\varep =f$ on $\partial\Omega$
and $(\nabla u_\varep)^*\in L^p(\partial\Omega)$ satisfies the estimate
\begin{equation}\label{10.1}
\|(\nabla u_\varep)^*\|_{L^p(\partial\Omega)}
\le C\, \| \nabla_{tan} f\|_{L^p(\partial\Omega)},
\end{equation}
where $C$ depends only on $d$, $m$, $p$, $\mu$, $\lambda$, $\tau$ and $\Omega$.
\end{thm}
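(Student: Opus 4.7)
The plan is to mirror the two-step scheme used for Theorem~\ref{maximal-function-theorem} in Section~9, with the Dirichlet Green's matrix $G_\varep(x,y)$ replacing the Neumann matrix $N_\varep(x,y)$ throughout. The case $p=2$ is supplied by \cite{Kenig-Shen-2}.

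For $p>2$: observe that Lemma~\ref{lemma-9.1} is already proved under either $u_\varep=0$ or $\partial u_\varep/\partial\nu_\varep=0$ on $B(Q,3r)\cap\partial\Omega$, so the weak reverse H\"older inequality holds verbatim for solutions vanishing on the boundary, the required boundary Lipschitz input being the Dirichlet one from \cite{AL-1987} in place of Theorem~\ref{boundary-Lipschitz-theorem}. A real variable argument of the same Caffarelli--Peral--Shen type used in Lemma~\ref{lemma-9.2}, fed now by the $L^2$ regularity estimate of \cite{Kenig-Shen-2} instead of the $L^2$ Neumann estimate, then promotes this to (\ref{10.1}) for every $2<p<\infty$.

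For $1<p<2$: I would argue by atomic decomposition as in \cite{Dahlberg-Kenig-1987} and the second half of Section~9. Let $f$ be a regularity atom on $\partial\Omega$ supported in $B(Q,r)\cap\partial\Omega$ with $f(Q)=0$ and $\|\nabla_{tan}f\|_{L^\infty(\partial\Omega)}\le r^{1-d}$; the goal reduces to the uniform bound $\int_{\partial\Omega}(\nabla u_\varep)^*\,d\sigma\le C$, after which interpolation with $p=2$ yields $1<p<2$. Near $Q$, Cauchy--Schwarz and the $L^2$ estimate give $\int_{B(Q,Cr)\cap\partial\Omega}(\nabla u_\varep)^*\,d\sigma\le Cr^{(d-1)/2}\|\nabla_{tan}f\|_{L^2(\partial\Omega)}\le C$. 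Far from $Q$, one represents $u_\varep$ through the Dirichlet Poisson kernel $P_\varep(x,y)=-\partial_{\nu^*_\varep(y)}G_\varep(x,y)$, subtracts its value at $Q$ under the integral (using $f(Q)=0$) to produce a kernel difference, and bounds $(\nabla u_\varep)^*$ via pointwise and H\"older estimates on $\nabla_x P_\varep$. Splitting $(\nabla u_\varep)^*$ at scale $\rho$ into the pieces $\mathcal{M}_{1,\rho}$ and $\mathcal{M}_{2,\rho}$ exactly as in Section~9 then yields $\int_{B(P_0,c\rho)\cap\partial\Omega}(\nabla u_\varep)^*\,d\sigma\le C(r/\rho)^\eta$, summable over the dyadic annuli outside $B(Q,Cr)$.

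The principal technical obstacle is to construct $G_\varep(x,y)$ and to derive the Dirichlet analogues of Theorems~\ref{Neumann-theorem-5.2} and~\ref{Neumann-function-theorem}: the size bound $|G_\varep(x,y)|\le C|x-y|^{2-d}$, the gradient bound $|\nabla_x G_\varep|+|\nabla_y G_\varep|\le C|x-y|^{1-d}$, the mixed bound $|\nabla_x\nabla_y G_\varep|\le C|x-y|^{-d}$, and their H\"older continuity in each variable. These parallel Section~5 but invoke the Dirichlet boundary H\"older and Lipschitz estimates of \cite{AL-1987} in place of Theorems~\ref{boundary-holder-theorem} and~\ref{boundary-Lipschitz-theorem}. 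Notably, these Green's function bounds do not require $A^*=A$; the symmetry enters only through the $L^2$ regularity estimate of \cite{Kenig-Shen-2}.
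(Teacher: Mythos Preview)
Your approach is correct and matches the paper for $p\ge 2$: the paper indeed cites \cite{Kenig-Shen-2} for $p=2$ and then combines Lemma~\ref{lemma-9.1} with an analogue of Lemma~\ref{lemma-9.2} (stated as Lemma~\ref{lemma-10.2}) for $p>2$, exactly as you propose.

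For $1<p<2$ your argument also works, but the paper takes a shorter path. You mirror Section~9 faithfully: represent via the Poisson kernel, subtract at $Q$, invoke a Dirichlet analogue of Theorem~\ref{Neumann-theorem-5.2}, and split $(\nabla u_\varep)^*$ into $\mathcal{M}_{1,\rho}$ and $\mathcal{M}_{2,\rho}$. The paper instead observes that the full Dirichlet Green's function estimates---in particular $|\nabla_x\nabla_y G_\varep(x,y)|\le C|x-y|^{-d}$---are already available from \cite{AL-1987}, so no new construction is needed. Writing $u_\varep(x)=-\int P_\varep(x,y)a(y)\,d\sigma(y)$ with $|a|\le Cr^{2-d}$ (from $\|\nabla_{tan}a\|_\infty\le r^{1-d}$ and $\mathrm{supp}(a)\subset B(Q,r)$) gives directly
\[
|\nabla u_\varep(x)|\le C\int_{B(Q,r)\cap\partial\Omega}\frac{|a(y)|}{|x-y|^{d}}\,d\sigma(y)\le \frac{Cr}{|x-Q|^{d}}\quad\text{for }|x-Q|\ge Cr,
\]
which is integrable over $\partial\Omega\setminus B(Q,Cr)$ without any $\mathcal{M}_{1,\rho}/\mathcal{M}_{2,\rho}$ splitting or Theorem~\ref{Neumann-theorem-5.2} analogue. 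Your route has the virtue of being self-contained and parallel to Section~9; the paper's buys brevity by leaning on the prior Dirichlet work in \cite{AL-1987}.
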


The case $p=2$ was proved in \cite{Kenig-Shen-2} for Lipschitz domains. 
The case $p>2$ follows from  Lemma \ref{lemma-9.1} 
and the following analog of Lemma
\ref{lemma-9.2}.

\begin{lemma}\label{lemma-10.2} 
Suppose that $A\in\Lambda (\lambda, \mu, \tau)$ and $A^*=A$.
Let $p>2$ and $\Omega$ be a bounded Lipschitz domain with connected boundary.
Assume that
\begin{equation}\label{10.2.1}
\left(\average_{B(Q,r)\cap\partial\Omega} |(\nabla u_\varep)^*|^p\, d\sigma \right)^{1/p}
\le
C_0\, \left(\average_{B(Q,2r)\cap\partial\Omega} |(\nabla u_\varep)^*|^2\, d\sigma \right)^{1/2},
\end{equation}
whenever $u_\varep \in W^{1,2}(B(Q,3r)\cap \Omega)$
is a weak solution to $\mathcal{L}_\varep (u_\varep)=0$ in $B(Q, 3r)\cap\Omega$
and $u_\varep=0$ on $B(Q,3r)\cap\partial\Omega$
for some $Q\in \partial\Omega$ and $0<r<r_0$.
Then the weak solution to $\mathcal{L}_\varep(u_\varep)=0$ in $\Omega$
and $u_\varep =f\in W^{1,p}(\partial\Omega)$
satisfies the estimate $\|(\nabla u_\varep)^*\|_{L^p(\partial\Omega)}
 \le C\, \| \nabla_{tan}f \|_{L^p(\partial\Omega)}$,
where $C$ depends only on $d$, $m$, $p$, $\mu$, $\lambda$, $\tau$, $r_0$,
$C_0$ and $\Omega$.
\end{lemma}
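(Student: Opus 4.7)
The plan is to mirror the proof of Lemma \ref{lemma-9.2}, with the Neumann decomposition replaced by one tailored to the regularity problem. The starting ingredient is the $L^2$ regularity estimate
$$
\|(\nabla u_\varep)^*\|_{L^2(\partial\Omega)} \le C\, \|\nabla_{tan} f\|_{L^2(\partial\Omega)}
$$
for solutions of $\mathcal{L}_\varep (u_\varep)=0$ with $u_\varep=f$ on $\partial\Omega$, established in \cite{Kenig-Shen-2} via the Rellich identity (this is where the symmetry $A^*=A$ enters). Combined with the interior gradient estimate of Theorem \ref{interior-estimate-theorem}, this $L^2$ bound will put us in position to invoke the real variable scheme of \cite{Shen-2005-bounds,Shen-2006-ne,Shen-2007-boundary} as systematized in \cite[pp.\,1819--1821]{Kim-Shen}, where the analogous $L^p$-criterion for the Neumann problem was proved.

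To set up the decomposition, I would fix a surface ball $B=B(Q,r)$ with $Q\in \partial\Omega$ and $0<r< c r_0$, choose $\eta\in C_0^\infty(\partial\Omega)$ with $\eta\equiv 1$ on $3B\cap\partial\Omega$, $\text{supp}(\eta)\subset 4B\cap\partial\Omega$ and $|\nabla_{tan}\eta|\le C r^{-1}$, and let $c_B = \average_{4B\cap\partial\Omega} f\, d\sigma$. Then I would split $u_\varep - c_B = v_\varep + w_\varep$, where
$$
\mathcal{L}_\varep (v_\varep) = 0 \text{ in } \Omega, \qquad v_\varep = (f-c_B)\eta \text{ on } \partial\Omega,
$$
$$
\mathcal{L}_\varep (w_\varep) = 0 \text{ in } \Omega, \qquad w_\varep = (f-c_B)(1-\eta) \text{ on } \partial\Omega.
$$
Applying the $L^2$ regularity estimate to $v_\varep$, together with the boundary Poincar\'e inequality on $4B\cap\partial\Omega$, yields
$$
\left(\average_{2B\cap\partial\Omega}|(\nabla v_\varep)^*|^2\, d\sigma\right)^{1/2}
\le C \left(\average_{5B\cap\partial\Omega}|\nabla_{tan} f|^2\, d\sigma\right)^{1/2}.
$$
Since $w_\varep$ has zero Dirichlet data on $3B\cap\partial\Omega$, the standing hypothesis (\ref{10.2.1}), applied at a harmlessly rescaled radius, gives
$$
\left(\average_{B\cap\partial\Omega}|(\nabla w_\varep)^*|^p\, d\sigma\right)^{1/p}
\le C_0 \left(\average_{2B\cap\partial\Omega}|(\nabla w_\varep)^*|^2\, d\sigma\right)^{1/2}.
$$

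With $F=(\nabla u_\varep)^*$ in the role of the function whose $L^p$ norm is to be controlled, feeding these two estimates into the abstract $L^p$ theorem of \cite{Kim-Shen} (the boundary analogue of the good-$\lambda$/Calder\'on--Zygmund argument of \cite{Caffarelli-1998}) produces
$$
\|(\nabla u_\varep)^*\|_{L^p(\partial\Omega)}
\le C\, \|\nabla_{tan} f\|_{L^p(\partial\Omega)},
$$
which is precisely (\ref{10.1}). The only delicate point, where I expect to spend most of the care, is arranging the cutoff $\eta$ so that $w_\varep$ genuinely vanishes on $3B\cap\partial\Omega$ (so that (\ref{10.2.1}) is directly applicable) while simultaneously ensuring that $\nabla_{tan}[(f-c_B)\eta]$ is controlled by $\nabla_{tan} f$ on a slightly enlarged surface ball via the Poincar\'e inequality; the connectedness of $\partial\Omega$ is used precisely to guarantee uniqueness of the Dirichlet solutions and the relevant Poincar\'e inequality for $f-c_B$. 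The rest is a verbatim transcription of the Neumann argument in the proof of Lemma \ref{lemma-9.2}.
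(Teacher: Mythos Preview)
Your proposal is correct and follows precisely the route indicated by the paper: the authors simply note that the proof is ``similar to that of Lemma \ref{lemma-9.2}'' and refer to \cite{Kilty-Shen-regularity} for the regularity-problem analogue, which is exactly the real-variable decomposition you have written out (localize the Dirichlet data by a cutoff, control the localized part with the $L^2$ regularity estimate from \cite{Kenig-Shen-2} plus a surface Poincar\'e inequality, and apply the reverse H\"older hypothesis to the remainder with vanishing boundary trace on $3B$). Your identification of the only delicate point---arranging the cutoff so that $w_\varep=0$ on $3B\cap\partial\Omega$ while keeping $\nabla_{tan}[(f-c_B)\eta]$ controlled---is accurate and is handled just as you describe.
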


The proof of Lemma \ref{lemma-10.2} is similar to that of Lemma \ref{lemma-9.2}.
We refer the reader to \cite{Kilty-Shen-regularity} where a similar statement was proved
for elliptic equations with constant coefficients.

To handle the case $1<p<2$, we follow the approach for Laplace's equation in Lipschitz domains
\cite{Dahlberg-Kenig-1987} and consider $L^2$ solutions
with Dirichlet data $u_\varep =a$, where supp$(a)\subset B(Q,r)\cap\partial\Omega$
for some $Q\in \partial\Omega$ and $0<r<r_0$, and $\|\nabla_{tan} a\|_{L^\infty(\partial\Omega)}
\le r^{1-d}$. By interpolation it suffices to show estimate (\ref{9.3.1}).
Note that $|a|\le C r^{2-d}$. Using the estimates on Green's functions in \cite{AL-1987}, one has
\begin{equation}\label{10.2.2}
|\nabla u_\varep (x)|\le \frac{Cr}{|x-Q|^d} \qquad \text{ if } |x-Q|\ge Cr.
\end{equation}
Estimate (\ref{9.3.1})
follows easily from the $L^2$ estimate $\|(\nabla u_\varep)^*\|_{L^2(\partial\Omega)}
\le C\|\nabla_{tan} a\|_{L^2(\partial\Omega)}$ and (\ref{10.2.2}).

\begin{remark}\label{remark-10.1}
{\rm
One may also consider the $L^p$ regularity problem for the exterior domain:
given $f\in W^{1,p}(\partial\Omega)$, find a solution $u_\varep$ to $\mathcal{L}_\varep 
(u_\varep) =0$ in $\Omega_-$ such that $u_\varep=f$ on $\partial\Omega$,
$(\nabla u_\varep)^*\in L^p(\partial\Omega)$ and $u_\varep (x)=O(|x|^{2-d})$
as $|x|\to\infty$.
It follows from \cite{Kenig-Shen-2} that if $\Omega$ is a bounded 
Lipschitz domain in $\br^d$, $d\ge 3$, 
then the unique solution to the $L^2$ regularity problem in $\Omega_-$
satisfies the estimate $\|(\nabla u_\varep)^*\|_{L^2(\partial\Omega)}
\le C\, \| \nabla_{tan} f\|_{W^{1,2}(\partial\Omega)}$.
An inspection of Theorem \ref{regularity-theorem} shows that
the $L^2$ result extends to $L^p$ for $1<p<\infty$, 
if $\Omega$ is a $C^{1,\alpha}$ domain.
}
\end{remark}

\section{Representation by layer potentials}

For $f\in L^p(\partial\Omega)$, the single layer potential $u_\varep =\mathcal{S}_\varep(f)$ and
double layer potential $w_\varep=\mathcal{D}_\varep (f)$ for the operator
$\mathcal{L}_\varep$ in $\Omega$ are defined by
\begin{equation}\label{layer-potential}
\aligned
u_\varep^\alpha (x) &=\int_{\partial\Omega}
\Gamma_{A,\varep}^{\alpha\beta} (x,y) f^\beta (y)\, d\sigma (y),\\
w^\alpha_\varep (x) &=\int_{\partial\Omega}
\left( \frac{\partial}{\partial \nu_{\varep}^*}
\big\{ \Gamma_{A^*,\varep}^\alpha (y, x)\big\}\right)^\beta f^\beta (y)\, d\sigma (y),
\endaligned
\end{equation}
where $\Gamma_{A,\varep}(x,y)$ and $\Gamma_{A^*,\varep} (x,y)=(\Gamma_{A, \varep} (y,x))^*$ 
are the fundamental
solutions for $\mathcal{L}_\varep$ and
$(\mathcal{L}_\varep)^*$ respectively.
Both $\mathcal{S}_\varep (f)$ and $\mathcal{D}_\varep (f)$
are solutions of $\mathcal{L}_\varep (u)=0$
in $\br^d\setminus \partial\Omega$. Under the assumptions that
$A\in \Lambda (\mu, \lambda,\tau)$ and $\Omega$ is a bounded Lipschitz domain,
it was proved in \cite{Kenig-Shen-2} that for $1<p<\infty$,
$$
\| \big(\nabla \mathcal{S}_\varep (f)\big)^*\|_{L^p(\Omega)}
+\| \big( \mathcal{D}_\varep (f)\big)^*\|_{L^p(\partial\Omega)} 
\le C_p \| f\|_{L^p(\partial\Omega)},
$$
where $C_p$ depends only on $d$, $m$, $\mu$, $\lambda$, $\tau$, $p$ and
the Lipschitz character of $\Omega$.
Furthermore, $(\nabla u_\varep)_\pm (P)$ exists for a.e. $P\in \partial\Omega$,
$\left(\frac{\partial u_\varep}{\partial \nu_\varep}\right)_\pm
=(\pm \frac12 I + \mathcal{K}_{A, \varep}) (f)$ and
$ (w_\varep)_\pm = (\mp\frac12 I +\mathcal{K}_{A^*, \varep}^*) (f)$,
where $\mathcal{K}_{A^*,\varep}^* $ is the adjoint of $\mathcal{K}_{A^*, \varep}$.
Here $(u)_\pm$ denotes the nontangential limits on $\partial\Omega$ of $u$, taken 
from $\Omega$ and $\Omega_-$ respectively.

Let $L^p_0 (\partial\Omega, \br^m)$ denote the space of functions in $L^p(\partial\Omega,\br^m)$
with  mean value zero.

\begin{thm}\label{layer-potential-theorem}
Let $\Omega$ be a bounded $C^{1,\alpha}$ domain in $\br^d$, $d\ge 3$
with connected boundary.
Suppose that $A\in \Lambda(\mu, \lambda, \tau)$ and $A^*=A$.
Then, for $1<p<\infty$, 
\begin{equation}\label{11.1}
\aligned
\frac12 I +\mathcal{K}_{A, \varep} &:
L_0^p(\partial\Omega, \br^m) \to L_0^p(\partial\Omega, \br^m),\\
-\frac12 I +\mathcal{K}^*_{A^*, \varep} &:
L^p(\partial\Omega, \br^m) \to L^p(\partial\Omega, \br^m),\\
 \mathcal{S}_\varep &: L^p(\partial\Omega, \br^m)\to
W^{1,p}(\partial\Omega, \br^m),
\endaligned
\end{equation}
are invertible and the operator norms of their inverses are bounded by
a constant independent of $\varep$.
\end{thm}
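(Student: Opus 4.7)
The plan is to reduce the three invertibility claims to the $L^p$ solvability of the interior/exterior Neumann and regularity problems (Theorems \ref{maximal-function-theorem} and \ref{regularity-theorem}, with their exterior analogs in Remarks \ref{remark-9.1} and \ref{remark-10.1}), combined with the jump relations for $\mathcal{S}_\varep$ recorded at the start of Section 11. This is essentially the Verchota--Dahlberg--Kenig layer-potential scheme, now adapted to the oscillating system $\mathcal{L}_\varep$. The cornerstone is the invertibility of $\mathcal{S}_\varep\colon L^p(\partial\Omega,\mathbb{R}^m)\to W^{1,p}(\partial\Omega,\mathbb{R}^m)$, which I would prove by matching. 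For surjectivity, given $h\in W^{1,p}(\partial\Omega)$, solve the $L^p$ regularity problem in $\Omega$ and in $\Omega_-$ (in the latter with decay $O(|x|^{2-d})$) to produce $u_+,u_-$ sharing the trace $h$; set $f:=(\partial u_+/\partial\nu_\varep)-(\partial u_-/\partial\nu_\varep)\in L^p(\partial\Omega)$. The function $w:=u-\mathcal{S}_\varep(f)$, with $u:=u_\pm$ on $\Omega_\pm$, has no jump in value or in conormal derivative, hence extends to a global $\mathcal{L}_\varep$-solution decaying at infinity, forcing $w\equiv 0$ and $\mathcal{S}_\varep(f)=h$ on $\partial\Omega$ with $\|f\|_{L^p}\le C\|h\|_{W^{1,p}}$. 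Injectivity is dual: $\mathcal{S}_\varep(f)|_{\partial\Omega}=0$ together with uniqueness of the interior and exterior Dirichlet problems forces $\mathcal{S}_\varep(f)\equiv 0$ off $\partial\Omega$, and then $f=[\partial\mathcal{S}_\varep(f)/\partial\nu_\varep]=0$.

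With $\mathcal{S}_\varep$ invertible, both double-layer-type invertibilities drop out by the same matching. For $\tfrac{1}{2}I+\mathcal{K}_{A,\varep}$ on $L^p_0$: given $g\in L^p_0$, solve the interior $L^p$ Neumann problem (Theorem \ref{maximal-function-theorem}) for $u$, find $\tilde f\in L^p$ with $\mathcal{S}_\varep(\tilde f)=u|_{\partial\Omega}$, and use interior Dirichlet uniqueness to promote this to $\mathcal{S}_\varep(\tilde f)\equiv u$ in $\Omega$, whence $(\tfrac{1}{2}I+\mathcal{K}_{A,\varep})(\tilde f)=g$; an appropriate element of the kernel (described below) is added to move $\tilde f$ into $L^p_0$. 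For $-\tfrac{1}{2}I+\mathcal{K}_{A,\varep}$ on $L^p$: given $g\in L^p$, solve the $L^p$ exterior Neumann problem (Remark \ref{remark-9.1}) for $u$ in $\Omega_-$, find $f$ with $\mathcal{S}_\varep(f)=u|_{\partial\Omega}$, and use exterior Dirichlet uniqueness to get $\mathcal{S}_\varep(f)\equiv u$ in $\Omega_-$, whence $(-\tfrac{1}{2}I+\mathcal{K}_{A,\varep})(f)=g$. Injectivity here is clean, since an exterior $\mathcal{L}_\varep$-solution with zero Neumann data and decay at infinity must vanish. The symmetry hypothesis $A=A^*$ gives $\mathcal{K}_{A^*,\varep}^*=\mathcal{K}_{A,\varep}^*$, so the $L^p$ invertibility of $-\tfrac{1}{2}I+\mathcal{K}_{A^*,\varep}^*$ follows by $L^p$--$L^{p'}$ duality from that of $-\tfrac{1}{2}I+\mathcal{K}_{A,\varep}$ on $L^{p'}$. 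All bounds are $\varep$-uniform because the $L^p$ solvability and H\"older/Lipschitz/maximal-function estimates invoked are.

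The main obstacle I anticipate is a careful description of the kernel of $\tfrac{1}{2}I+\mathcal{K}_{A,\varep}$ on $L^p$ and of its interaction with $L^p_0$. Any $f$ in this kernel gives $\mathcal{S}_\varep(f)$ constant in $\Omega$ (one vector-valued constant, since $\Omega$ is connected), and in $\Omega_-$ it must then coincide with the exterior $\mathcal{L}_\varep$-Dirichlet solution having that constant boundary data and $O(|x|^{2-d})$ decay; connectedness of $\partial\Omega$ thus cuts the kernel down to dimension $m$. One has to verify, via the divergence theorem on annuli $\Omega_-\cap B(0,R)$ together with the $O(|x|^{1-d})$ gradient estimate on the fundamental solution from Section 5, that the flux $\int_{\partial\Omega} f\,d\sigma$ of each generator of this kernel is non-zero. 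This transversality to $L^p_0$ is what makes the $L^p_0$ adjustment in the surjectivity argument work, and also renders the restricted operator injective on $L^p_0$. Once this flux computation is in hand, every other step is a routine assembly of the uniform Neumann and regularity estimates already established in the paper.
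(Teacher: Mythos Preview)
Your proposal is correct and follows essentially the same route as the paper: the paper's proof cites the $p=2$ case from \cite{Kenig-Shen-2} and then states that for $p\neq 2$ the invertibilities follow from the uniform $L^p$ solvability of the Neumann and regularity problems in $\Omega$ and $\Omega_-$ (Theorems \ref{maximal-function-theorem}, \ref{regularity-theorem} and Remarks \ref{remark-9.1}, \ref{remark-10.1}), which is exactly the Verchota--Dahlberg--Kenig matching scheme you spell out in detail. Your identification of the kernel analysis for $\tfrac12 I+\mathcal{K}_{A,\varep}$ and its transversality to $L^p_0$ as the one genuine technical point is accurate, and your outline via the flux computation on large spheres is the standard way to handle it.
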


\begin{proof}
The case $p=2$ was proved in \cite{Kenig-Shen-2} for Lipschitz domains.
If $\Omega$ is $C^{1,\alpha}$, the results for $p\neq 2$ follow from the solvabilities of
the $L^p$ Neumann and regularity problems with uniform estimates in $\Omega$ and $\Omega_-$
(see Theorem \ref{maximal-function-theorem}, Theorem \ref{regularity-theorem},
Remarks \ref{remark-9.1} and \ref{remark-10.1}).
\end{proof}

As a corollary, solutions to the $L^p$ Dirichlet, Neumann and regularity problems for
$\mathcal{L}_\varep (u_\varep)=0$ may be represented
by layer potentials with uniformly $L^p$ bounded density functions.
This shows that the classical method of integral equations applies to the 
elliptic system $\mathcal{L}_\varep (u_\varep)=0$.

\begin{thm}\label{representation-theorem} Let $1<p<\infty$. 
 Under the same assumptions on $A$ and $\Omega$
as in Theorem \ref{layer-potential-theorem}, the following holds.

\item

(i) For $g\in L^p(\partial\Omega)$, the solution 
to the $L^p$ Dirichlet problem in $\Omega$ with $u_\varep=g$ on $\partial\Omega$ is given
by $u_\varep =\mathcal{D}_\varep (h_\varep)$ with $\|h_\varep\|_{L^p(\partial\Omega)}
\le C_p\|g\|_{L^p(\partial\Omega)}$.

\item

(ii) For $g\in L^p(\partial\Omega)$, the solution to the $L^p$ Neumann problem in $\Omega$ 
with $\frac{\partial u_\varep}{\partial\nu_\varep} =g$ on $\partial\Omega$
is given by
$u_\varep =\mathcal{S}_\varep (h_\varep)$ with $\|h_\varep\|_{L^p(\partial\Omega)}
\le C_p\| g\|_{L^p(\partial\Omega)}$.

\item

(iii)
For $g\in W^{1,p}(\partial\Omega)$, 
the solution to the $L^p$ regularity problem in $\Omega$ with $u_\varep =g$ on $\partial\Omega$
is given by
$u_\varep =\mathcal{S}_\varep (h_\varep)$ with $\|h_\varep\|_{L^p(\partial\Omega)}
\le C_p\| g\|_{L^p(\partial\Omega)}$.

\end{thm}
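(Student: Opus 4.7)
The plan is to obtain each of the three parts as an immediate consequence of Theorem \ref{layer-potential-theorem}, applied together with the jump relations for $\mathcal{S}_\varep$ and $\mathcal{D}_\varep$ recalled just before that theorem and the uniqueness of solutions to the corresponding $L^p$ boundary value problem.

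For part (i), given $g\in L^p(\partial\Omega)$ I would set
$h_\varep=\bigl(-\tfrac12 I+\mathcal{K}^*_{A^*,\varep}\bigr)^{-1}(g)$; the second line of (\ref{11.1}) shows this inverse is bounded on $L^p(\partial\Omega)$ uniformly in $\varep$, so $\|h_\varep\|_{L^p(\partial\Omega)}\le C_p\|g\|_{L^p(\partial\Omega)}$. Then $v_\varep=\mathcal{D}_\varep(h_\varep)$ is a solution of $\mathcal{L}_\varep v_\varep=0$ in $\Omega$ with $\|(v_\varep)^*\|_{L^p(\partial\Omega)}\le C\|h_\varep\|_{L^p(\partial\Omega)}$ (by the uniform double-layer bound recalled before Theorem \ref{layer-potential-theorem}), and the jump relation $(v_\varep)_+=(-\tfrac12 I+\mathcal{K}^*_{A^*,\varep})(h_\varep)=g$ furnishes the Dirichlet datum; uniqueness of the $L^p$ Dirichlet problem in $\Omega$ then identifies $u_\varep$ with $v_\varep$.

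For part (ii), given mean-zero $g\in L^p(\partial\Omega)$, put $h_\varep=\bigl(\tfrac12 I+\mathcal{K}_{A,\varep}\bigr)^{-1}(g)\in L^p_0(\partial\Omega)$ via the first line of (\ref{11.1}); then $v_\varep=\mathcal{S}_\varep(h_\varep)$ satisfies $\mathcal{L}_\varep v_\varep=0$, $(\nabla v_\varep)^*\in L^p(\partial\Omega)$, and $(\partial v_\varep/\partial\nu_\varep)_+=(\tfrac12 I+\mathcal{K}_{A,\varep})(h_\varep)=g$, and uniqueness of the $L^p$ Neumann problem (Theorem \ref{maximal-function-theorem}) modulo constants identifies $u_\varep$ with $v_\varep$ up to a harmless additive constant. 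Part (iii) is identical in spirit: $h_\varep=\mathcal{S}_\varep^{-1}(g)$ via the third line of (\ref{11.1}) produces $v_\varep=\mathcal{S}_\varep(h_\varep)$ with $v_\varep|_{\partial\Omega}=g$ and $\|(\nabla v_\varep)^*\|_{L^p(\partial\Omega)}\le C\|h_\varep\|_{L^p(\partial\Omega)}\le C\|g\|_{W^{1,p}(\partial\Omega)}$, after which Theorem \ref{regularity-theorem} supplies uniqueness.

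No serious new obstacle arises at this stage: all the analytic difficulty has been absorbed into Theorem \ref{layer-potential-theorem}, whose proof combines the $L^2$ invertibilities of \cite{Kenig-Shen-2} with the uniform $L^p$ solvabilities in $\Omega$ and $\Omega_-$ established in Theorems \ref{maximal-function-theorem} and \ref{regularity-theorem} and Remarks \ref{remark-9.1}, \ref{remark-10.1}. The present theorem is simply a matter of reading off that the inverses produced by Theorem \ref{layer-potential-theorem} give densities with the advertised $L^p$ bounds, independent of $\varep$.
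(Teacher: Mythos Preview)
Your proposal is correct and is precisely the approach the paper intends: the paper gives no separate proof of this theorem, presenting it simply as a corollary of Theorem \ref{layer-potential-theorem} via the jump relations and uniqueness, which is exactly what you have spelled out. (The bound you obtain in part (iii), $\|h_\varep\|_{L^p}\le C\|g\|_{W^{1,p}}$, is the natural one coming from the invertibility of $\mathcal{S}_\varep:L^p\to W^{1,p}$; the $\|g\|_{L^p}$ appearing in the paper's statement is almost certainly a misprint.)
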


\bibliography{kls1}

\providecommand{\bysame}{\leavevmode\hbox to3em{\hrulefill}\thinspace}
\providecommand{\MR}{\relax\ifhmode\unskip\space\fi MR }
\providecommand{\MRhref}[2]{%
  \href{http://www.ams.org/mathscinet-getitem?mr=#1}{#2}
}
\providecommand{\href}[2]{#2}
\begin{thebibliography}{10}

\bibitem{Agmon-1959}
S.~Agmon, A.~Douglis, and L.~Nirenberg, \emph{Estimates near the boundary for
  solutions of elliptic partial differential equations satisfying general
  boundary conditions. {I}}, Comm. Pure Appl. Math. \textbf{12} (1959),
  623--727.

\bibitem{Agmon-1964}
\bysame, \emph{Estimates near the boundary for solutions of elliptic partial
  differential equations satisfying general boundary conditions. {II}}, Comm.
  Pure Appl. Math. \textbf{17} (1964), 35--92.

\bibitem{AL-1987}
M.~Avellaneda and F.~Lin, \emph{Compactness methods in the theory of
  homogenization}, Comm. Pure Appl. Math. \textbf{40} (1987), 803--847.

\bibitem{AL-1987-ho}
\bysame, \emph{Homogenization of elliptic problems with ${L}^p$ boundary data},
  Applied Math. Optim. \textbf{15} (1987), 93--107.

\bibitem{AL-1989-II}
\bysame, \emph{Compactness methods in the theory of homogenization {II}:
  {E}quations in nondivergent form}, Comm. Pure Appl. Math. \textbf{42} (1989),
  139--172.

\bibitem{AL-1989-ho}
\bysame, \emph{Homogenization of {P}oisson's kernel and applications to
  boundary control}, J. Math. Pure Appl. \textbf{68} (1989), 1--29.

\bibitem{AL-1991}
\bysame, \emph{${L}^p$ bounds on singular integrals in homogenization}, Comm.
  Pure Appl. Math. \textbf{44} (1991), 897--910.

\bibitem{bensoussan-1978}
A.~Bensoussan, J.-L. Lions, and G.C. Papanicolaou, \emph{Asymptotic {A}nalysis
  for {P}eriodic {S}tructures}, North Holland, 1978.

\bibitem{Caffarelli-1998}
L.~Caffarelli and I.~Peral, \emph{On ${W}^{1,p}$ estimates for elliptic
  equations in divergence form}, Comm. Pure Appl. Math. (1998), 1--21.

\bibitem{Chechkin-2007}
G.A. Chechkin, A.L. Piatnitski, and A.S. Shamaev, \emph{Homogenization: Methods
  and {A}pplications}, Transl. Math. Monographs, vol. 234, AMS, 2007.

\bibitem{Dahlberg-personal}
B.~Dahlberg, personal communication (1990).

\bibitem{Dahlberg-Kenig-1987}
B.~Dahlberg and C.~Kenig, \emph{Hardy spaces and the {N}eumann problem in
  ${L^p}$ for {L}aplace's equation in {L}ipschitz domains}, Ann. of Math.
  \textbf{125} (1987), 437--466.

\bibitem{ERS}
A.F.M.~ter Elst, D.W. Robinson, and A.~Sikora, \emph{On second-order periodic
  elliptic operators in divergence form}, Math. Z. \textbf{238} (2001),
  569--637.

\bibitem{Fefferman-Stein-1972}
C.~Fefferman and E.M. Stein, \emph{${H}^p$ spaces of several variables}, Acta
  Math. (1972), 137--193.

\bibitem{Geng}
J.~Geng, \emph{${W}^{1,p}$ estimates for elliptic equations with {N}eumann
  boundary conditions in {L}ipschitz domains}, Preprint.

\bibitem{Giaquinta}
M~Giaquinta, \emph{Multiple {I}ntegrals in the {C}alculus of {V}ariations and
  {N}onlinear {E}lliptic {S}ystems}, Ann. of Math. Studies, vol. 105, Princeton
  Univ. Press, 1983.

\bibitem{Hofmann-Kim-2007}
S.~Hofmann and S.~Kim, \emph{The {G}reen function estimates for strongly
  elliptic systems of second order}, Manuscripta Math. \textbf{124} (2007),
  139--172.

\bibitem{Jikov-1994}
V.V. Jikov, S.M. Kozlov, and O.A. Oleinik, \emph{Homogenization of
  {D}ifferential {O}perators and {I}ntegral {F}unctionals}, Springer-Verlag,
  Berlin, 1994.

\bibitem{Kenig-book}
C.~Kenig, \emph{Harmonic {A}nalysis {T}echniques for {S}econd {O}rder
  {E}lliptic {B}oundary {V}alue {P}roblems}, CBMS Regional Conference Series in
  Math., vol.~83, AMS, Providence, RI, 1994.

\bibitem{Kenig-Pipher-1993}
C.~Kenig and J.~Pipher, \emph{The {N}eumann problem for elliptic equations with
  non-smooth coefficients}, Invent. Math. \textbf{113} (1993), 447--509.

\bibitem{Kenig-Shen-1}
C.~Kenig and Z.~Shen, \emph{Homogenization of elliptic boundary value problems
  in {L}ipschitz domains}, Math. Ann. (to appear).

\bibitem{Kenig-Shen-2}
\bysame, \emph{Layer potential methods for elliptic homogenization problems},
  Comm. Pure Appl. Math. (to appear).

\bibitem{Kilty-Shen-regularity}
J.~Kilty and Z.~Shen, \emph{The {$L^p$} regularity problem on {L}ipschitz
  domains}, Trans. Amer. Math. Soc. (to appear).

\bibitem{Kim-Shen}
A.~Kim and Z.~Shen, \emph{The {N}eumann problem in ${L}^p$ on {L}ipschitz and
  convex domains}, J. Funct. Anal. \textbf{225}, 1817--1830.

\bibitem{Lions-1985-IMA}
J.-L. Lions, \emph{Asymptotic problems in distributed systems}, Metastability
  and Incompletely Posed Problems, IMA Vol. Math. Appl., vol.~3, Springer,
  1987, pp.~241--258.

\bibitem{Lions-1988-SIAM}
\bysame, \emph{Exact controllability, stabilization and perturbations for
  distributed systems}, SIAM Review \textbf{30} (1988), no.~1, 1--68.

\bibitem{Oleinik-1992}
O.~A. Ole{\u\i}nik, A.~S. Shamaev, and G.~A. Yosifian, \emph{Mathematical
  problems in elasticity and homogenization}, Studies in Mathematics and its
  Applications, vol.~26, North-Holland Publishing Co., Amsterdam, 1992.

\bibitem{Shen-2005-bounds}
Z.~Shen, \emph{Bounds of {R}iesz transforms on ${L}^p$ spaces for second order
  elliptic operators}, Ann. Inst. Fourier (Grenoble) \textbf{55} (2005),
  173--197.

\bibitem{Shen-2006-ne}
\bysame, \emph{Necessary and sufficient conditions for the solvability of the
  ${L}^p$ {D}irichlet problem on {L}ipschitz domains}, Math. Ann. \textbf{336}
  (2006), no.~3, 697--724.

\bibitem{Shen-2007-boundary}
\bysame, \emph{The ${L}^p$ boundary value problems on {L}ipschitz domains},
  Adv. Math. \textbf{216} (2007), 212--254.

\bibitem{Taylor-tools}
M.~E. Taylor, \emph{Tools for {PDE}}, Mathematical Surveys and Monographs,
  vol.~81, American Mathematical Society, Providence, RI, 2000.

\bibitem{Verchota-1984}
G.~Verchota, \emph{Layer potentials and regularity for the {D}irichlet problem
  for {L}aplace's equation in {L}ipschitz domains}, J. Funct. Anal. \textbf{59}
  (1984), 572--611.

\end{thebibliography}

\small
\noindent\textsc{Department of Mathematics, 
University of Chicago, Chicago, IL 60637}\\
\emph{E-mail address}: \texttt{cek@math.uchicago.edu} \\

\noindent \textsc{Courant Institute of Mathematical Sciences, New York University, New York, NY 10012}\\
\emph{E-mail address}: \texttt{linf@cims.nyu.edu}\\

\noindent\textsc{Department of Mathematics, 
University of Kentucky, Lexington, KY 40506}\\
\emph{E-mail address}: \texttt{zshen2@email.uky.edu} \\

\noindent \today

\end{document}